\documentclass{amsart}
\usepackage{array}
\usepackage{graphicx,verbatim, amsmath, amssymb, amsthm, amsfonts, epsfig, amsxtra,ifthen,mathtools,epstopdf,caption,enumerate,hhline,bbm,capt-of,longtable} 
\usepackage{tikz}
\usepackage{subcaption}                 %%% added 
\usepackage[mode=image|tex]{standalone} %%% added
\usepackage{tikz}
\usetikzlibrary{arrows,backgrounds,shapes}
%%%%%% these are my tikz styles for noncrossing arc diagrams
%%%%% style for all points and arcs
\tikzset{point/.style = {circle,draw=black, fill=black, inner sep=0pt,minimum size=4pt}}
\tikzset{arc/.style = {black, thick, rounded corners=10pt}}
\tikzset{barc/.style = {black, thick, rounded corners=6pt}}

%%%%% style for orbifold point
\tikzset{cross/.style={cross out, draw=black, very thick, minimum size=6pt, inner sep=0pt, outer sep=0pt}}

%%%%% styles for cn and mn
\tikzset{none/.style = {red, dotted, very thick, rounded corners=10pt}}
\tikzset{ntwo/.style = {blue, dashed,thick, rounded corners=10pt}}
\tikzset{end/.style = {circle,draw=purple!30, fill=purple, opacity=.3, inner sep=0pt,minimum size=10pt}}
\tikzset{bone/.style = {red, dotted, very thick, rounded corners=6pt}}
\tikzset{btwo/.style = {blue, dashed,thick, rounded corners=6pt}}

%%%%% style for type D
%% eq class boxes
\tikzset{background rectangle/.style={draw=black, 
    line width=2pt}} 
% \tikzset{extpt/.style = {circle,draw=black, fill=black, inner sep=0pt,minimum size=6pt}}
% \tikzset{intpt/.style = {circle,draw=black, thick,inner sep=5pt,minimum size=6pt}}
\tikzset{extpt/.style = {circle,draw=black, fill=black, inner sep=0pt,minimum size=4pt}}
\tikzset{intpt/.style = {circle,draw=black,inner sep=3.5pt,minimum size=4.5pt}}
\tikzset{whitept/.style= {circle,draw=white,fill=white,minimum size=4.5pt}}

%%%%% style for posets
\tikzset{cover/.style = {thick}}

%%%%% style for subposets and congruences
\tikzset{forgetnode/.style = {opacity=.4}}
\tikzset{forgetcover/.style = {gray, opacity=.3}}
\tikzset{keepcover/.style = {black, thick}}
\tikzset{eclass/.style = {line width=15pt,blue,line cap=round,opacity=.3}}
\tikzset{ept/.style = {circle,draw=blue, fill=blue,inner sep=0pt,minimum size=15pt,opacity=.3}}

%%%%%% style for ncpl
\tikzset{dot/.style = {circle,draw=black, fill=black, inner sep=0pt,minimum size=5pt}}

\tikzset{barrier/.style= {red, very thick}}  
\usepackage[bookmarks=true, bookmarksopen=false,%
    colorlinks=true,%
    linkcolor=darkblue,%
    citecolor=darkblue,%
    filecolor=darkblue,%
    menucolor=darkblue,%
%    linktoc=page,%
    linktoc=all,%
    urlcolor=darkblue
]{hyperref}
\usepackage[capitalize,noabbrev]{cleveref}
\epstopdfsetup{suffix=}
\DeclareGraphicsExtensions{.ps}
\DeclareGraphicsRule{.ps}{pdf}{.pdf}{`ps2pdf -dEPSCrop -dNOSAFER #1 \noexpand\OutputFile}
\graphicspath{ {goodpictures/} }
\usepackage{xspace}

\definecolor{darkblue}{cmyk}{1,0.4,0,0.4}  %blue

\newtheorem{proposition}{Proposition}[section]
\newtheorem{theorem}[proposition]{Theorem}
\newtheorem{corollary}[proposition]{Corollary}

\theoremstyle{definition}

\newtheorem{definition}[proposition]{Definition}

\theoremstyle{remark}
\newtheorem{remark}[proposition]{Remark}

\numberwithin{equation}{section}

\crefname{proposition}{Proposition}{Propositions}
\crefname{theorem}{Theorem}{Theorems}
\crefname{corollary}{Corollary}{Corollaries}
\crefname{lemma}{Lemma}{Lemmas}
\crefname{definition}{Definition}{Definitions}
\crefname{remark}{Remark}{Remarks}

\Crefname{proposition}{Proposition}{Propositions}
\Crefname{theorem}{Theorem}{Theorems}
\Crefname{corollary}{Corollary}{Corollaries}
\Crefname{lemma}{Lemma}{Lemmas}
\Crefname{definition}{Definition}{Definitions}
\Crefname{remark}{Remark}{Remarks}

% commands for marginal notes below
% to make a marginal note, insert
% \margin{Your comment here.} in the text.
%  To sign the note:
% \margin[You]{Your comment here.}
% You can also label your comments and get a reference to the 
% number later, like:
% \margin[You]{Your comment here. \label{thefrivolouscomment}}
% Things may start to look ugly if you have more than 99 
%  marginal notes, but utility, not beauty is the intention here.
%  This uses the command \marginpar
%  defined, I think, in verbatim
%  The circled numbers will screw up the formatting slightly.

% Sometimes, if you terminate a run of LaTeX with "X" while using this macro, the next time you compile you will get the error "! File ended while scanning use of \@newl@bel.". The solution is to delete the .aux file (and fix whatever made you abort the run in the first place) and run LaTeX again.
% Another solution seems to be to terminate the new run with "q" and then run again.
\usepackage{color}

% to set color for comment numbers, use any of the 68 colors from 
% the color package in the command below
\newcommand{\margincolor}{red}      
\definecolor{darkgreen}{rgb}{0,0.7,0}
\newcommand{\marginauthorcolor}{darkgreen}      

% control the width of your comments
\addtolength{\marginparwidth}{5mm}

\newcounter{margincounter}
\setcounter{margincounter}{0}

\newcommand{\marginnum}{
\ifnum\value{margincounter}<10
\textcolor{\margincolor}{\begin{picture}(0,0)\put(2.2,2.4){\circle{9}}\end{picture}\footnotesize\arabic{margincounter}}
\else\ifnum\value{margincounter}<100
\textcolor{\margincolor}{\begin{picture}(0,0)\put(4.256,2.5){\circle{11}}\end{picture}\footnotesize\arabic{margincounter}}
\else
\textcolor{\margincolor}{\begin{picture}(0,0)\put(6.8,2.5){\circle{14}}\end{picture}\footnotesize\arabic{margincounter}}
\fi\fi
}

\newcommand{\margin}[2][]
{\!\!\refstepcounter{margincounter}\marginnum\marginpar{\textcolor{\margincolor}{\arabic{margincounter}.}\,\,\textcolor{\marginauthorcolor}{\small#1:}\,\,\tiny #2}}
%  If you want to switch which margin you're using, do the command  \reversemarginpar before your marginal comment.
% To switch back, do \normalmarginpar
% (But I think it won't let you switch which margin you use in the middle of a paragraph of the main text.

%  to remove marginal notes, uncomment the following:
%  \renewcommand{\margin}[2][]{}
%  to remove just the circled numbers in the text, uncomment the following:
%  \renewcommand{\marginnum}{}
%  For final versions of a paper, it's probably best to remove all the \margin
%  commands.  Much to my annoyance, they mess up the typesetting.

\newcommand{\marginN}[1]{\margin[NR]{#1}}
\newcommand{\marginE}[1]{\margin[EB]{#1}}
\newcommand{\marginA}[1]{\margin[AT]{#1}}

\newcommand{\response}[2]{ {\color{red}#1:}~#2}
\newcommand{\rn}[1]{\response{NR}{#1}}

%  If you want to switch which margin you're using, do the command  \switchmargin before your marginal comment.
% But it won't let you switch which margin you use in the middle of a paragraph of the main text.
% Also, you can only switch if there is room on the other margin.  (I.e. if you switch too often, things may overlap.)
\makeatletter
\newcommand{\switchmargin}{
\if@reversemargin
\normalmarginpar
\else
\reversemarginpar
\fi
}
\makeatother

% This is for setting off words we define in a separate typeface.
\newcommand{\newword}[1]{\textbf{\emph{#1}}}

\newcommand{\reals}{\mathbb R}

\newcommand{\x}{{\mathbf{x}}}
\newcommand{\ep}{\varepsilon}

\newcommand{\cov}{\operatorname{cov}}
\newcommand{\inv}{\operatorname{inv}}
\newcommand{\Con}{\operatorname{Con}}

\newcommand{\covered}{\lessdot}
\newcommand{\set}[1]{{\left\lbrace #1 \right\rbrace}}

\newcommand{\A}{{\mathcal A}}
\renewcommand{\P}{{\mathcal P}}

\newcommand{\e}{\mathbf{e}}
\newcommand{\join}{\vee}
\newcommand{\meet}{\wedge}
\renewcommand{\Join}{\bigvee}

\newcommand{\orb}{$\times$\xspace}

  %unnumbered

%\allowdisplaybreaks

%  Uncomment the following to remove all figures (useful for checking how many pages are taken up by figures)
%\usepackage{comment}
%\excludecomment{figure}
%\let\endfigure\relax

%\title[Scattering diagrams]{On the combinatorics and discrete geometry of scattering diagrams}
%\title[Scattering diagrams]{Scattering diagrams:  Combinatorics and discrete geometry}
\title{Noncrossing arc diagrams of type B}
\author{Emily Barnard}
\author{Nathan Reading}
\author{Ashley M. Tharp}
%\date{}                                           % Activate to display a given date or no date
\thanks{Nathan Reading and Ashley Tharp were partially supported by the National Science Foundation under award number DMS-2054489.
}
\subjclass[2020]{05A05, 06B10, 05E16}

\begin{document}

\begin{abstract}
Noncrossing arc diagrams are combinatorial models for permutations that encode information about lattice congruences of the weak order and about the associated discrete geometry.
In this paper, we consider two related, analogous models for signed permutations.
One model features centrally symmetric noncrossing arc diagrams, while the other features their quotients modulo the central symmetry.
We demonstrate the utility of the models by applying them to various questions about lattice quotients of the weak order.
% Noncrossing arc diagrams are combinatorial models for permutations that encode information about lattice congruences of the weak order and about the associated discrete geometry.
% In this paper, we construct analogous models for Coxeter groups of type B (signed permutations) and show that these encode the analogous information.
% There are two models, one featuring centrally symmetric noncrossing partitions and another featuring their quotients modulo the symmetry.
% We demonstrate the utility of the models by applying them to various questions about lattice quotients of the weak order.
\end{abstract}
\maketitle

%\vspace{-15pt}

\setcounter{tocdepth}{1}
\tableofcontents  %\marginN{For now, setting toc depth to 3 to see subsubsections, but we might not want to keep this.}

\section{Introduction}\label{intro sec}
Noncrossing arc diagrams~\cite{arcs} are combinatorial objects, in bijection with permutations, that bring to light lattice-theoretic information about the weak order on permutations and related discrete-geometric information.
The results of~\cite{arcs} include:
\begin{itemize}
\item An explicit bijection from permutations to noncrossing arc diagrams and an explicit inverse;
\item a bijection from arcs to join-irreducible elements of the weak order;
\item a bijection from arcs to shards;
\item a characterization of compatibility of join-irreducible elements (in the sense of canonical join complexes);
\item a construction of the canonical join complex of the weak order in a way that proves that this complex is flag; and
\item a characterization of lattice congruences on the weak order and their quotients.
\end{itemize}

In this paper, we consider two models for signed permutations and prove results analogous to the results listed above.
(The theorem on flagness of the canonical join complex was already vastly generalized by the first author in~\cite{Barnard}, so we don't state it separately here.)
In the sequel~\cite{Darcs}, we construct an analogous model for finite Coxeter groups of type D (even-signed permutations) and prove analogous results. 

%In this paper, we construct models analogous to noncrossing arc diagrams for Coxeter groups of type B (signed permutations) and, in a forthcoming paper, type D (even-signed permutations) and prove the analogous results. 
%(The theorem on flagness of the canonical join complex was already vastly generalized by the first author in~\cite{Barnard}, so we don't state it again here for Coxeter groups of types~B and~D.)

%The first challenge in constructing models in types B and D is to choose the correct combinatorial object.
%After that, bijections to elements of the Coxeter groups, bijections to join-irreducible elements and to shards, and characterization of canonical join complexes are all relatively straightforward.
%Significantly more challenging is the combinatorial characterization of lattice congruences and quotients, particularly in type D.
%

The models provide a tool for characterizing lattice congruences and quotients in type B (\cref{B uncontracted,B cong bij} or \cref{orb uncontracted,orb cong bij}).
This tool is a great improvement over the earlier state of the art \cite[Theorem~7.3]{congruence}, which is usable (e.g. in \cite{cambrian,diagram}) but unwieldy.
The improvement is in two directions.
First, the characterization of forcing relations among join-irreducible congruences is a one-step criterion, rather than a description of arrows whose transitive closure is the forcing relation.
Second, the description of the entire forcing relation is much simpler even than the earlier description of single arrows.

%The resulting characterization of congruences in type B is a great improvement over the earlier state of the art \cite[Theorem~7.3]{congruence}, which is usable (e.g. in \cite{cambrian,diagram}) but clunky.
%The improvement is in two directions.
%First, we give a direct one-step characterization of forcing relations among join-irreducible congruences, rather than only characterizing the arrows whose transitive closure is the forcing relation.
%Second, the description of the entire forcing relation is much simpler even than the earlier description of single arrows.

%The characterization of congruences in type D is the first of its kind.
%Although the proof of the type-D characterization is quite complicated, the payoff is that much of the complication can be left behind in the proof, so that the resulting tool is again a one-step characterization and is not prohibitively complicated.

The type-B model comes in two equivalent versions, analogous to the description of signed permutations as sequences $\pi_{-n}\pi_{-n+1}\cdots \pi_{-1}\pi_1\pi_2\cdots \pi_n$ or as sequences $\pi_1\pi_2\cdots \pi_n$.
In the first version, signed permutations of $\set{\pm1,\ldots,\pm n}$ are in bijection with centrally symmetric noncrossing arc diagrams on $2n$ points.
See~\cref{fig:b2symncads}. 
This version has the advantage that the comparison with type-A noncrossing arc diagrams is transparent.  
Centrally symmetric noncrossing arc diagrams appeared in \cite{bicat} and much more extensively in Julian Ritter's thesis \cite{RitterThesis} and in a related paper \cite{shardpoly}.
In particular, the thesis proves the one-step characterization of forcing by a detailed look at the geometry of shards \cite[Proposition~4.2.42]{RitterThesis}.
(See also \cite[Observation~4.2.43]{RitterThesis} and \cite[Theorem~114]{shardpoly}.)
Here, we shortcut some of that process using \cref{Emily's thm}, a general reformulation of forcing among shards that is more easily translated into the language of arcs.
The thesis~\cite{RitterThesis} and paper~\cite{shardpoly} go on to construct quotientopes of types A and B (polytopes realizing lattice quotients of the weak order in the same way the permutohedron realizes the weak order~\cite{quotientopes}).

The second version of the type-B model is an orbifold model, obtained by passing to the quotient of centrally symmetric noncrossing diagrams modulo the symmetry.
See~\cref{fig:b2orbncads}.
In addition to the obvious advantage of spatial compactness, the orbifold model brings to light some more subtle and surprising connections with type-A diagrams:
In the third author's thesis~\cite{TharpThesis} (see also \cite{ReadingTharp}), the meet and join operations in the shard intersection orders of types A and B are characterized in terms of the usual noncrossing arc diagrams and in terms of the orbifold model, and these characterizations are used to show that the shard intersection order of type $A_n$ (permutations in $S_{n+1}$) embeds as a sublattice of the shard intersection order of type~$B_n$.

%The type-D model also has two versions.
%In one version, type-D arcs are equivalence classes of arcs in the type-B orbifold model.
%Most classes are singletons, but some have two elements.
%In the other version, the type-D arcs look less like arcs of type A or B, and come in three types:  ordinary (like type A), partially doubled, and branched.  
%(See~\cref{fig:typeDarcs}.) 
%It would be interesting to make the connection between type-D arcs and the characterization of join-irreducible elements of the type-D weak order produced in a representation-theory context in \cite[Section~6.2]{IRRT} and \cite{Asai}.
%It also seems possible that the existence of a combinatorial model for lattice congruences of the weak order in type D will allow the construction of quotientopes \cite{quotientopes,shardpoly} to be extended to type~D.

%To show the utility of the combinatorial models, we conclude the paper by using the models to compute specific families of congruences in types B and D.
%Examples include Cambrian congruences \cite{cambrian}, bipartite biCambrian congruences \cite{bicat}, and congruences associated to diagram homomorphisms between weak orders \cite{diagram}.  \marginN{Others?}
%Some of these congruences were characterized previously using the previous models, but in those cases the treatment here showcases the improvement in usability in the new models. \marginA{the word "in" shows up three times in this sentence. "improvement in usability of the new models" maybe?}

The paper concludes with examples showing the utility of the orbifold model.
In particular, we give a much simpler proof of a characterization from \cite{diagram} of the generators of congruences of the weak order on~$B_n$ whose quotients are the weak order on $A_n$.
We also greatly simplify the proof of a characterization of Cambrian lattices of type B from \cite{cambrian}, and give the first characterization of the generators of the bipartite and linear biCambrian congruences of type B.

\section{Preliminaries}\label{prelim sec}
In this section, we establish background on lattice theory, finite Coxeter groups, posets of regions, and shards.
We assume the most basic notions, particularly for lattice theory and Coxeter groups.
In Section~\ref{shard sec}, we prove a new characterization of shard arrows, which provides a convenient tool to study forcing among congruences in the case of lattices of regions of a simplicial arrangement.

\subsection{Lattices and congruences}
\label{lat sec}
An element $j$ of a finite lattice $L$ is called \newword{join-irreducible} if and only if it covers exactly one element.  
We write $j_*$ for the unique element covered by $j$.

The \newword{canonical join representation} (CJR) of an element $x$ in a finite lattice~$L$ is the unique antichain $X$ in $L$ such that $x=\Join X$ and such that, if $X'$ is any subset of $L$ with $x=\Join X'$, the order ideal generated by $X'$ contains the order ideal generated by $X$.
In particular, $X$ consists of join-irreducible elements called the \newword{canonical joinands} of $x$.
An element $x$ may or may not have a canonical join representation.

The property that every element of a finite lattice $L$ has a canonical join representation is equivalent to a property called \newword{join-semidistributivity} \cite[Theorem~2.24]{FreeLattices}.
We will not need the usual definition of join-semidistributivity here, but for finite lattices, one can profitably take the existence of canonical join representations as the definition.
If both join-semidistributivity and the dual condition hold, then $L$ is called \newword{semidistributive}.

A natural question, given a finite join-semidistributive lattice $L$, is to characterize which subsets of $L$ are canonical join representations of elements of $L$.
Thus the question is to characterize the collection $\set{X\subseteq L:X\text{ is the CJR of }\Join X}$.
This collection is called the \newword{canonical join complex}, because it is a a simplicial complex (with vertex set the set of join-irreducible elements).

A simplicial complex is \newword{flag} if each minimal non-face is a $2$-element set.
(Equivalently, a set $X$ of vertices forms a face if and only if every $2$-element subset of $X$ is an edge.)
The following theorem is part of \cite[Theorem~2]{Barnard}.

\begin{theorem}\label{flag}
Suppose $L$ is a finite join-semidistributive lattice.
The canonical join complex of $L$ is flag if and only if $L$ is semidistributive.
\end{theorem}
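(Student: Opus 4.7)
The plan is to exploit the $\kappa$ map: for a join-irreducible $j$ in a finite lattice $L$ with unique lower cover $j_*$, let $\kappa(j)$ denote, when it exists, the maximum element of $\set{y\in L:y\geq j_*\text{ and }y\not\geq j}$. The existence of $\kappa(j)$ for every join-irreducible is equivalent to meet-semidistributivity of $L$, and when $L$ is semidistributive, a pair $\set{j_1,j_2}$ of join-irreducibles is an edge of the canonical join complex if and only if $j_1\leq\kappa(j_2)$ and $j_2\leq\kappa(j_1)$. The two implications will be proved separately, both through this map.

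For the backward direction ($L$ semidistributive implies the canonical join complex is flag), suppose $X=\set{j_1,\ldots,j_k}$ is an antichain of join-irreducibles, every pair of which is an edge. Then $j_\ell\leq\kappa(j_i)$ for all $\ell\neq i$, so $\Join_{\ell\neq i}j_\ell\leq\kappa(j_i)$. Since $\kappa(j_i)\not\geq j_i$, removing any single joinand strictly decreases $\Join X$, so the join is irredundant. A short further argument, using join-semidistributivity together with the same $\kappa$-inequality, shows that no alternative decomposition $X'$ of $\Join X$ generates a smaller order ideal than $X$ does. Hence $X$ is the canonical join representation of $\Join X$ and so is a face.

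For the forward direction I would argue the contrapositive: assume $L$ is join-semidistributive but fails meet-semidistributivity, and produce a triangle in the $1$-skeleton of the canonical join complex that is not a face. The failure of meet-semidistributivity provides a witness $x\wedge y = x\wedge z < x\wedge(y\vee z)$; equivalently, some join-irreducible $j$ has two maximal elements $u_1, u_2$ above $j_*$ avoiding $j$, with $u_1\vee u_2$ above $j$. After passing to a minimal such witness, I would extract three join-irreducibles from the canonical joinands of $u_1$, $u_2$, and $u_1\vee u_2$ (or of $y$, $z$, and $y\vee z$) that are pairwise compatible yet together redundant.

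The main obstacle is this extraction in the forward direction: one must ensure that each pair genuinely forms a canonical join representation, while the triple admits a hidden redundancy. This requires a careful minimality argument on the failure of meet-semidistributivity and a precise use of join-semidistributivity to control how canonical joinands of related elements interact. The backward direction is essentially bookkeeping once the $\kappa$-map characterization of edges is in hand; the forward direction is where the real work lies.
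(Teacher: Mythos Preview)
The paper does not prove this theorem at all: it merely quotes the statement as ``part of \cite[Theorem~2]{Barnard}'' and moves on. So there is no proof in the paper to compare your proposal against.

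As for the proposal itself: your backward direction is the standard argument and is essentially complete. The characterization of edges via $j_1\le\kappa(j_2)$ and $j_2\le\kappa(j_1)$ is correct in a finite semidistributive lattice, and the irredundancy of $\Join X$ follows exactly as you say. The ``short further argument'' you allude to (that $X$ actually refines every other join representation of $\Join X$) does need to be written out, but it is routine once you know that every element has a canonical join representation and you compare $X$ to the CJR of $\Join X$ using the $\kappa$-inequalities.

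Your forward direction, however, is not a proof but a hope, and you say as much. The difficulty is real: from a failure of meet-semidistributivity you get a join-irreducible $j$ with two incomparable maximal elements $u_1,u_2$ in $\set{y:y\ge j_*,\,y\not\ge j}$, but turning this into three specific join-irreducibles that are pairwise canonical joinands yet jointly redundant is not automatic. The phrase ``extract three join-irreducibles from the canonical joinands of $u_1$, $u_2$, and $u_1\vee u_2$'' hides the entire content of the argument; there is no guarantee that such canonical joinands are pairwise compatible, nor that the triple fails to be a face. Barnard's actual proof of this direction is more delicate than your sketch suggests and does not proceed quite this way. If you intend to supply a self-contained proof rather than cite \cite{Barnard}, the forward implication needs substantially more detail before it can be assessed as correct.
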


\cref{flag} is extremely important for understanding the combinatorics of finite semidistributive lattices.
Given a finite semidistributive lattice $L$, we say that two join-irreducible elements $j$ and $j'$ of $L$ are \newword{compatible} if and only if $\set{j,j'}$ is a face of the canonical join complex.
(That is, if and only if there is an element whose canonical join representation is $\set{j,j'}$.
Equivalently, if and only if there is an element whose canonical join representation \emph{contains} $\set{j,j'}$.)
Since, in particular,~$L$ is join-semidistributive (so that every element has a canonical join representation), the elements of $L$ are in bijection with the faces of the canonical join complex.
But then since $L$ is semidistributive, \cref{flag} implies that the faces of the canonical join complex are precisely the pairwise compatible sets of join-irreducible elements of $L$.
In later sections, we will give a bijection between join-irreducible elements of the weak order and various kinds of ``arcs'', define notions of compatibility of arcs that correspond to compatibility of join-irreducible elements,
and thus show that elements of the Coxeter group are in bijection with pairwise compatible sets of arcs.

A \newword{congruence} on a lattice $L$ is an equivalence relation $\Theta$ such that if $x_1\equiv x_2$ and $y_1\equiv y_2$ modulo $\Theta$, then $(x_1\meet y_1)\equiv(x_2\meet y_2)$ and $(x_1\join y_1)\equiv(x_2\join y_2)$.
The \newword{quotient} $L/\Theta$ of $L$ modulo $\Theta$ is the lattice whose elements are the $\Theta$-classes $[x]_\Theta$ and whose meet and join are given by $[x]_\Theta\meet[y]_\Theta=[x\meet y]_\Theta$ and $[x]_\Theta\join[y]_\Theta=[x\join y]_\Theta$.

Congruences and quotients have a nice order-theoretic description as well.
An equivalence relation $\Theta$ on a finite lattice $L$ is a lattice congruence if and only if (1) each equivalence class is an interval, (2) the map sending an element to the bottom of its equivalence class is order-preserving, and (3) the map sending an element to the top of its equivalence class is order-preserving.
The quotient $L/\Theta$ is isomorphic to the subposet induced by the set of bottom elements of congruence classes.
Said another way:
An element $x\in L$ is said to be \newword{contracted} by $\Theta$ if $x$ is congruent to some element strictly less than $x$.
The quotient $L/\Theta$ is isomorphic to the subposet of $L$ induced by uncontracted elements.

A join-irreducible element $j$ is contracted by $\Theta$ if and only if $j\equiv j_*$ modulo~$\Theta$.
It is known that a congruence is completely determined by the set of join-irreducible elements it contracts.
(For a precise statement, see \mbox{\cite[Theorem~9-5.12]{regions9}}.)
Thus, to completely characterize congruences on a given lattice, it suffices to characterize which sets of join-irreducible elements can be contracted by congruences.
In this paper, we carry out this characterization in the case of the weak order on Coxeter groups of type B.  % and D.

Canonical join representations behave well when passing to quotients.
The following proposition is \cite[Proposition~10-5.29]{regions9}.

\begin{proposition}\label{CJR quotient}
Suppose $L$ is a finite join-semidistributive lattice and $\Theta$ is a congruence on $L$.
Then an element $x\in L$ is contracted by $\Theta$ if and only if one or more of its canonical joinands is contracted by $\Theta$.
If $x$ is not contracted by $\Theta$, then its canonical join representation in the quotient $L/\Theta$ coincides with its canonical join representation in~$L$.
\end{proposition}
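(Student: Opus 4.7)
The plan is to separate the proposition into its two assertions---the biconditional characterization of contraction, and the coincidence of canonical join representations (CJRs) in $L$ and $L/\Theta$---and to prove the biconditional first, then use it together with the defining minimality property of the CJR to establish the second assertion.

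For the biconditional, the ``if'' direction is short. Suppose a canonical joinand $j_i$ of $x$ is contracted, so $j_i\equiv(j_i)_*$ modulo $\Theta$. Joining with the remaining canonical joinands (each congruent to itself) yields $x\equiv(j_i)_*\join\bigjoin_{l\neq i}j_l$. The right-hand side must be strictly less than $x$: otherwise $\{(j_i)_*\}\cup\{j_l:l\neq i\}$ would be a join-representation of $x$ whose generated order ideal, by the defining minimality of the CJR, must contain $j_i$, forcing either $j_i\leq(j_i)_*$ or $j_i\leq j_l$ for some $l\neq i$---both impossible. For the ``only if'' direction, suppose $x\equiv z$ with $z<x$. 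Meeting with a canonical joinand $j_i\leq x$ gives $j_i\equiv j_i\meet z$. If $j_i\meet z=j_i$ for every $i$, then every $j_i\leq z$, so $x=\bigjoin j_i\leq z$, contradicting $z<x$; hence some $j_i\meet z<j_i$, witnessing that $j_i$ is contracted.

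For the CJR coincidence, assume $x$ is uncontracted and let $X=\{j_1,\ldots,j_k\}$ be its CJR in $L$. By the first part every $j_i$ is uncontracted, so each $[j_i]_\Theta$ is a join-irreducible of $L/\Theta$, and routine checks show that $\{[j_i]_\Theta\}$ is an antichain with join $[x]_\Theta$. The substantive step is to verify the minimality condition in $L/\Theta$: given any antichain $\{[y_1]_\Theta,\ldots,[y_m]_\Theta\}$ of join-irreducibles in $L/\Theta$ with join $[x]_\Theta$, choose each $y_l$ uncontracted. The main obstacle I expect is lifting this alternative representation from the quotient back to an honest equation in $L$. The trick is that $[y_l]_\Theta\leq[x]_\Theta$ implies $y_l\meet x\equiv y_l$, and since $y_l$ is uncontracted this forces $y_l\leq x$, so $\bigjoin y_l\leq x$; combining this with $[\bigjoin y_l]_\Theta=[x]_\Theta$ and the fact that $x$ is the bottom of $[x]_\Theta$ gives $\bigjoin y_l=x$ in $L$. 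The CJR minimality in $L$ then yields, for each $i$, an index $l(i)$ with $j_i\leq y_{l(i)}$; since passage to $\Theta$-classes is order-preserving on uncontracted elements, the corresponding containment of order ideals holds in $L/\Theta$, completing the proof.
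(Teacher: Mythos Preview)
The paper does not prove this proposition; it simply quotes it as \cite[Proposition~10-5.29]{regions9}. So there is no in-paper argument to compare against, and your task was really to supply a self-contained proof.

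Your proof is correct. Both directions of the biconditional are clean: the ``if'' direction correctly uses the order-ideal minimality of the CJR to rule out $(j_i)_*\vee\bigvee_{l\neq i}j_l=x$, and the ``only if'' direction via meeting with $z$ is the right idea. For the second assertion, the key lifting step---using $y_l\wedge x\equiv y_l$ together with $y_l$ uncontracted to force $y_l\le x$, and then using that $x$ is the bottom of its class to force $\bigvee y_l=x$ in $L$---is exactly what is needed and is carried out correctly.

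One small point: in the minimality verification you restrict the competing representation to an \emph{antichain of join-irreducibles} $\{[y_l]_\Theta\}$, but the definition of CJR in the paper requires comparison against \emph{every} subset $X'$ with $\bigvee X'=[x]_\Theta$. This is harmless, because your argument never actually uses that the $[y_l]_\Theta$ are join-irreducible or form an antichain---only that each $y_l$ can be chosen as the bottom of its class. So you can simply drop that restriction and the proof goes through verbatim. (Alternatively, one can note that any join-representation refines to one by join-irreducibles with a smaller order ideal, so checking against those suffices; but dropping the hypothesis is cleaner here.)
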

In the proposition, the quotient $L/\Theta$ is realized, as before, as the subposet of~$L$ induced by the elements of $L$ that are not contracted by $\Theta$.
The second assertion of the proposition implies, in particular, that the join-irreducible elements of the quotient are precisely the join-irreducible elements of $L$ that are not contracted by~$\Theta$.

The following corollary is an immediate consequence of \cref{CJR quotient}.
(Recall that the vertices of the canonical join complex of $L$ are the join-irreducible elements of $L$.)

\begin{corollary}\label{CJC quotient}
Suppose $L$ is a finite join-semidistributive lattice and $\Theta$ is a congruence on $L$.
Then the canonical join complex of $L/\Theta$ is the subcomplex of the canonical join complex of $L$ induced by the join-irreducible elements of $L$ not contracted by $\Theta$.
\end{corollary}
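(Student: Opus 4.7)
The plan is to verify the two containments between the canonical join complex of $L/\Theta$ and the induced subcomplex of the canonical join complex of $L$ on the uncontracted join-irreducibles, using \cref{CJR quotient} in both directions. Let $\mathcal{C}$ denote the canonical join complex of $L$, let $J^\circ$ denote the set of join-irreducibles of $L$ not contracted by $\Theta$, and let $\mathcal{C}|_{J^\circ}$ denote the induced subcomplex of $\mathcal{C}$ on $J^\circ$. I first observe that, as noted in the remark following \cref{CJR quotient}, the join-irreducibles of $L/\Theta$ are exactly the elements of $J^\circ$, so the canonical join complex $\mathcal{C}'$ of $L/\Theta$ and $\mathcal{C}|_{J^\circ}$ have the same vertex set.

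For the forward inclusion, let $X$ be a face of $\mathcal{C}'$, i.e.\ $X$ is the canonical join representation in $L/\Theta$ of some element $y = [x]_\Theta$, and by the standard identification of $L/\Theta$ with the subposet of uncontracted elements we may take $x$ to be uncontracted. The second assertion of \cref{CJR quotient} says that $X$ is also the canonical join representation of $x$ in $L$, so $X$ is a face of $\mathcal{C}$; since every element of $X$ lies in $J^\circ$, it is a face of $\mathcal{C}|_{J^\circ}$.

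For the reverse inclusion, let $X$ be a face of $\mathcal{C}|_{J^\circ}$. Then $X$ is a face of $\mathcal{C}$, so $X$ is the canonical join representation of some $x \in L$, and every canonical joinand of $x$ lies in $J^\circ$, i.e.\ is uncontracted. By the first assertion of \cref{CJR quotient}, $x$ itself is not contracted by $\Theta$, and then by the second assertion $X$ is the canonical join representation of $x$ in $L/\Theta$, so $X$ is a face of $\mathcal{C}'$.

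There is no real obstacle here: once \cref{CJR quotient} is in hand the argument reduces to a bookkeeping check that ``face of $\mathcal{C}'$'' and ``face of $\mathcal{C}$ all of whose vertices are uncontracted'' refer to the same collections of subsets of $J^\circ$, which is why the statement is flagged as an immediate consequence.
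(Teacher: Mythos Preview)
Your proof is correct and is precisely the unpacking of what the paper means by ``immediate consequence of \cref{CJR quotient}'': the paper gives no separate argument, and your two-containment check using both assertions of \cref{CJR quotient} is exactly the verification one would write out.
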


Join-irreducible elements cannot be contracted independently.
Instead, there is a partial (pre-)order that mediates which sets of join-irreducibles can be contracted by a congruence.
We say that a join-irreducible element $j_1$ \newword{forces} another join-irreducible element $j_2$ if every congruence that contracts $j_1$ also contracts $j_2$.
In general, the forcing relation is a pre-order (a reflexive, transitive, but not necessarily antisymmetric relation) on the set of join-irreducible elements.

\subsection{Coxeter groups and the weak order}\label{weak sec}
Let $(W,S)$ be a Coxeter system.
We write $\ell(w)$ for the usual length function, the length of $w$ in the alphabet $S$.
A \newword{(left) inversion} of an element $w\in W$ is a reflection $t$ such that $\ell(tw)<\ell(w)$.
Let $\inv(w)$ denote the set of inversions of $w$.
We write ``$\le$'' for the \newword{(right) weak order} on $W$, which is the partial order on $W$ with $u\le w$ if and only if $\inv(u)\subseteq\inv(w)$.
The cover relations in the weak order are precisely the relations $ws\covered w$ such that $w\in W$, $s\in S$, and $\ell(ws)<\ell(w)$.

Given an element $w$ of a Coxeter group $W$, a \newword{cover reflection} of $w$ is a reflection~$t$ such that $tw\covered w$.
Cover reflections of $w$ are in bijection with elements covered by~$w$:  If $ws\covered w$, then $wsw^{-1}$ is a cover reflection.
Let $\cov(w)$ denote the set of cover reflections of $w$.

The weak order on a finite Coxeter group is a semidistributive lattice~\cite{Poly-Barbut}.
The following is \cite[Theorem~10-3.9]{regions10}.  

\begin{theorem}\label{Cox canon}
Suppose $W$ is a finite Coxeter group and $w\in W$.
For each $t\in\cov(w)$, there is a unique minimal element $j_t$ in $\set{v: v\leq w,\,t\in\inv(v)}$.
The canonical join representation of $w$ is $w=\Join\set{j_t: t\in\cov(w)}$.
\end{theorem}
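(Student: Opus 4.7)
The plan is to translate the given set into lattice-theoretic terms and invoke the general structure theory of canonical joinands in a finite semidistributive lattice. For each $t \in \cov(w)$, the element $w' = tw$ is a lower cover of $w$ in the weak order, and every lower cover of $w$ arises this way. Since $w = tw'$ with $\ell(w) = \ell(w')+1$, we have $\inv(w') = \inv(w) \setminus \set{t}$. Combined with the characterization of the weak order by containment of inversion sets, this yields, for any $v \le w$, the equivalence of $t \in \inv(v)$ and $v \not\le w'$. The theorem thus reduces to the following lattice-theoretic claim: in a finite semidistributive lattice $L$, for each lower cover $w' \covered w$ the set $\set{v \in L : v \le w,\, v \not\le w'}$ has a unique minimum $j_{w'}$, and $w = \Join\set{j_{w'} : w' \covered w}$ is the CJR of $w$.

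For existence of the minimum, I would show that the set $\set{v \le w : v \not\le w'}$ is closed under meets. If $v_1$ and $v_2$ both lie in the set, then each $v_i \join w'$ strictly exceeds $w'$ yet is bounded above by $w$, so $v_i \join w' = w$ since $w' \covered w$. Applying join-semidistributivity to the equality $v_1 \join w' = v_2 \join w'$ yields $(v_1 \meet v_2) \join w' = w$, hence $v_1 \meet v_2 \not\le w'$. A finite nonempty meet-closed subset of $L$ has a unique minimum; this is $j_{w'}$, and setting $j_t = j_{tw}$ proves the first assertion of the theorem. For join-irreducibility, any lower cover $u \covered j_{w'}$ satisfies $u \le w$, and by minimality of $j_{w'}$ must satisfy $u \le w'$; hence $u \le j_{w'} \meet w'$, and since $u \covered j_{w'}$ this forces $u = j_{w'} \meet w'$, so $j_{w'}$ has a unique lower cover.

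To finish, I would show $\Join\set{j_{w'} : w' \covered w} = w$ (otherwise the join would lie weakly below some lower cover $w_0$ of $w$, contradicting $j_{w_0} \not\le w_0$) and then identify the $j_{w'}$ with the canonical joinands of $w$. The hard part is this last identification—confirming that $\set{j_{w'}}$ is actually the CJR rather than merely a join representation. I would invoke the standard fact from the theory of finite semidistributive lattices that the canonical joinands of $w$ are in bijection with its lower covers, each canonical joinand $j$ being characterized by the unique lower cover $w'$ with $j \not\le w'$; minimality of $j_{w'}$ among elements $v \le w$ with $v \not\le w'$ then pins it down as the canonical joinand corresponding to $w'$, completing the proof.
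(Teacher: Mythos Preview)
The paper does not prove this theorem; it is quoted verbatim as \cite[Theorem~10-3.9]{regions10} and used as background for the rest of the paper. There is therefore no in-paper proof to compare your proposal against.

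That said, your argument is essentially correct and is the standard route to this result. The translation of $\set{v \le w : t \in \inv(v)}$ into $\set{v \le w : v \not\le tw}$ via $\inv(tw) = \inv(w) \setminus \set{t}$ is valid, and the lattice-theoretic steps---meet-closure from the join-semidistributive law, join-irreducibility of $j_{w'}$ via $(j_{w'})_* = j_{w'} \wedge w'$, and $\Join_{w'} j_{w'} = w$---are all fine. The one place you defer to an outside ``standard fact'' can be made self-contained cheaply: for any set $X$ with $\Join X = w$ and any lower cover $w'$, some $x \in X$ must satisfy $x \not\le w'$, whence $j_{w'} \le x$ by minimality; thus the order ideal generated by $\set{j_{w'}}$ sits inside the order ideal of every join representation of $w$, which is exactly the defining property of the canonical join representation (after replacing the set by its maximal elements if necessary to obtain an antichain).
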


The weak order on a finite Coxeter group has a property called congruence uniformity~\cite{CasPolMor}, which in particular implies that the forcing relation is a partial order.
In later sections, we describe this forcing order on join-irreducible elements of the weak order in types A and B in terms of subarc relationships on arcs.  
In the sequel \cite{Darcs}, we describe the forcing order in type D by an analogous but more complicated notion.

\subsection{Shards, congruences, and forcing}\label{shard sec}
We now briefly recall some discrete-geometric notions that can be used to understand lattice congruences of the weak order, and prove a characterization of forcing that is useful in describing the forcing order on join-irreducible elements.
In fact, we prove the characterization in the more general setting of lattices of regions of simplicial arrangements.  
For more details on the background material, see~\cite{regions9,regions10}.

A \newword{(real, central) hyperplane arrangement} is a finite collection $\A$ of linear hyperplanes in $\reals^d$.
(We us the adjective ``real'' because hyperplane arrangements in vector spaces over other fields are often of interest.
The adjective ``central'' emphasizes that our hyperplanes are linear, as opposed to affine hyperplanes which might not contain the origin.)
The \newword{regions} of $\A$ are the closures of the connected components of $\reals^d\setminus\cup\A$.
Choosing a \newword{base region} $B$, we define the \newword{separating set} $S(R)$ of a region $R$ to be the set of hyperplanes in $\A$ that separate the interior of $R$ from the interior of $B$.
The \newword{poset of regions} $\P(\A,B)$ is the set of regions, partially ordered by setting $Q\le R$ if and only if $S(Q)\subseteq S(R)$.

A hyperplane arrangement is \newword{simplicial} if every region is a simplicial cone (the nonnegative linear span of a linearly independent set of vectors).  
Semidistributivity in the following theorem is \cite[Theorem~3]{hyperplane}.  
The lattice property was proven earlier in \cite[Theorem~3.4]{BEZ}.

\begin{theorem}\label{semid}
If $\A$ is a simplicial hyperplane arrangement, then $\P(\A,B)$ is a semidistributive lattice.
\end{theorem}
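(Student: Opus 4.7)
The plan is to prove the two assertions separately, relying in each case on the simpliciality hypothesis, which controls the local structure at each region. First I would establish that $\P(\A,B)$ is a lattice, following the strategy of Bj\"orner--Edelman--Ziegler, and then prove semidistributivity using the theory of shards sketched in \cref{shard sec}.

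For the lattice property, the idea is to produce, for any two regions $Q$ and $R$, an explicit candidate join. Define a subset $T\subseteq\A$ to be \emph{closed} if whenever $F$ is a codimension-two flat, $H_1,H_2$ are two hyperplanes of $\A$ containing $F$ with $H_1,H_2\in T$, and $H\in\A$ contains $F$ and separates $B$ from a point of the region of the rank-two subarrangement on the opposite side of $H_1,H_2$ from $B$, then $H\in T$. The closure $\overline{S(Q)\cup S(R)}$ with respect to this operation is a candidate for the separating set of $Q\join R$. Simpliciality of $\A$ is exactly what is needed to show that every such closed set containing $S(B)=\emptyset$ is realized as $S(R')$ for some region $R'$; any upper bound of $Q$ and $R$ then has separating set containing this closure, so $R'$ is the join. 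The meet is handled by the symmetric argument (or by observing that $\P(\A,-B)$ is the opposite poset and is also the lattice of regions of a simplicial arrangement).

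For semidistributivity, I would use shards. Each rank-two subarrangement with at least three hyperplanes ``cuts'' all but the two outermost hyperplanes along the codimension-two flat; a \emph{shard} is a connected component of a hyperplane $H\in\A$ with all such cuts removed. Shards are in bijection with join-irreducible elements of $\P(\A,B)$: a shard $\Sigma$ in $H$ corresponds to the unique minimal region $j_\Sigma$ whose separating set contains $H$ and that meets $\Sigma$ along a facet. For each region $R$, one checks that the map sending a lower cover $R'\covered R$ to the shard containing the common facet of $R$ and $R'$ yields a family of join-irreducibles whose join is $R$. The key step, and the main obstacle, is to show that this antichain is actually the canonical join representation of $R$ — equivalently, that any other join-expression $R=\Join X$ can be refined (in the order-ideal order) to this one. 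This is proven by analyzing how a given join-irreducible must, via shard containment across rank-two subarrangements, ``force'' the wall shards of $R$ into the separating set of any upper bound. Existence of canonical join representations gives join-semidistributivity; applying the same argument to $\P(\A,-B)$ and using the order-reversing identification with $\P(\A,B)$ gives meet-semidistributivity, completing the proof.
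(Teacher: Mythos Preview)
The paper does not prove this theorem at all: it is stated as background and attributed to the literature, with the lattice property credited to \cite[Theorem~3.4]{BEZ} and semidistributivity to \cite[Theorem~3]{hyperplane}. So there is no ``paper's own proof'' to compare against; any correct argument you supply would already go beyond what the paper does.

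That said, a word of caution about your plan for semidistributivity. You propose to deduce it from the existence of canonical join representations, and you propose to produce those representations via shards, essentially invoking the content of \cref{j shard} and \cref{shard CJR}. In the paper's logical order (and in the original literature) those results are stated \emph{after} \cref{semid} and are proved in sources that already take semidistributivity for granted. If you want to run the argument in the direction you describe, you must be sure that your proof that the lower-shard join-irreducibles form the canonical join representation of a region does not quietly appeal to semidistributivity (for instance, by assuming that canonical join representations exist in order to identify them). You acknowledge that this ``key step'' is the main obstacle, but the sketch you give --- forcing wall shards through rank-two subarrangements --- is not yet an argument; it is where the real work lies, and it is precisely the content of \cite[Theorem~3]{hyperplane}. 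Your outline for the lattice property via biconvex closures in the spirit of BEZ is fine as a plan, though again the details (that every closed set is the separating set of a region, under simpliciality) are exactly the substance of that reference.
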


The set $\A_W$ of reflecting hyperplanes of a Coxeter group $W$ (in the usual reflection representation) constitute the \newword{Coxeter arrangement} associated to $W$.
Choosing $B$ to be a region bounded by the reflecting hyperplanes for the simple reflections $S$, the map $w\mapsto wB$ is an isomorphism from the weak order on $W$ to the poset of regions $\P(\A_W,B)$.
It is well known that the Coxeter arrangements are simplicial.
(See \cite[Theorem~10-2.1]{regions10} and the citation notes at the end of that chapter.)
Thus \cref{semid} in particular implies the semidistributivity of the weak order, already mentioned in \cref{weak sec}.

A \newword{rank-two subarrangement} of an arrangement $\A$ is a subset $\A'$ of $\A$, with $|\A'|>1$ that can be described as $\A'=\set{H\in\A:H\supset U}$ for some \emph{codimension-$2$} linear subspace $U$.
Any two distinct hyperplanes $H_1,H_2\in\A$ are contained in a unique rank-two subarrangement, namely $\set{H\in\A:H\supset(H_1\cap H_2)}$.

Suppose that we have fixed a base region $B$ as in the definition of $\P(\A,B)$.
Then a rank-two subarrangement has two distinguished hyperplanes called its \newword{basic hyperplanes}.
The subarrangement cuts $\reals^d$ into regions, and $B$ is contained in one of these regions, call it $B'$.
The basic hyperplanes of $\A'$ are the two hyperplanes that bound $B'$.

A hyperplane $H_1\in\A$ \newword{cuts} a hyperplane $H_2\in\A$ if $H_1$ is basic in the rank-two subarrangement $\A'$ containing $H_1$ and $H_2$, and $H_2$ is not basic in $\A'$.
Given $H\in\A$, the \newword{shards} in $H$ are the closures of connected components of $\set{H\setminus H':H'\text{ cuts }H}$.
The set of shards of $\A$ is the set of all shards in all hyperplanes in $\A$.
Thus to make the shards of $\A$, we ``slice'' each hyperplane along all of its intersections with hyperplanes that cut it.
The construction of shards depends on the choice of base region $B$, but when $\A$ is a Coxeter arrangement, all choices of $B$ are symmetric.

Given a shard $\Sigma$, we write $H_\Sigma$ for the hyperplane containing $\Sigma$.
Suppose $R$ is a region of $\A$ and $\Sigma$ is a shard of $\A$.
If $R\cap\Sigma$ is a facet of $R$ and $H_\Sigma\in S(R)$, then $R$ is an \newword{upper region} of $\Sigma$ and $\Sigma$ is a \newword{lower shard} of $R$.
The following proposition is \cite[Propositions~3.2, 3.5]{congruence}.

\begin{proposition}\label{j shard}
If $\A$ is a simplicial arrangement, then each shard $\Sigma$ has a unique minimal upper region $J_\Sigma$.
The region $J_\Sigma$ is join-irreducible in $\P(\A,B)$ and is the unique join-irreducible upper region of $\Sigma$.
The map $\Sigma\mapsto J_\Sigma$ is a bijection from the set of shards of $\A$ to the set of join-irreducible elements of $\P(\A,B)$.
The inverse map takes a join-irreducible region $J$ to its unique lower shard.
\end{proposition}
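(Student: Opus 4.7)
The plan is to construct both directions of the correspondence and verify they are mutual inverses, leveraging the lattice structure of $\P(\A,B)$ from \cref{semid}. First, for the map $J \mapsto \Sigma$ sending a join-irreducible region to a shard: given join-irreducible $J$, lattice-theoretic uniqueness yields a unique $J_*$ with $J_* \covered J$. Since $\A$ is simplicial, $J$ has $d$ facets, each supported by a unique hyperplane, so there is a unique hyperplane $H \in S(J) \setminus S(J_*)$ with the facet $F := J \cap J_*$ contained in $H$. Points in the relative interior of $F$ have a fixed sign vector with respect to $\A$---zero on $H$, nonzero elsewhere---so $F$ avoids every other hyperplane in its relative interior, and in particular avoids the traces of all hyperplanes that cut $H$. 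Hence $F$ lies in a single shard $\Sigma \subseteq H$, giving the unique lower shard of $J$.

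Next, the map $\Sigma \mapsto J_\Sigma$: given a shard $\Sigma$, let $U_\Sigma$ denote the set of upper regions of $\Sigma$. Nonemptiness of $U_\Sigma$ follows by choosing a point $p$ in the relative interior of $\Sigma$ and perturbing slightly across $H_\Sigma$ away from $B$; simpliciality ensures that a nearby region containing this perturbation is adjacent to $\Sigma$. The key geometric step is to show that $U_\Sigma$ is closed under the lattice meet in $\P(\A,B)$, so that $J_\Sigma := \bigmeet U_\Sigma$ belongs to $U_\Sigma$ and serves as its minimum. This uses the fact that every facet $R \cap H_\Sigma$ for $R \in U_\Sigma$ lies in the single connected piece $\Sigma$ of $H_\Sigma$ after removing the cutting hyperplanes.

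To see that $J_\Sigma$ is join-irreducible and is the only join-irreducible element of $U_\Sigma$, suppose $R \in U_\Sigma$ covers $R'$ via some hyperplane $H_1 \ne H_\Sigma$. Then $R$ also covers $r_{H_\Sigma}(R)$ through its facet $R \cap \Sigma$, so $R$ has at least two distinct covered elements and is not join-irreducible. In particular any $R \in U_\Sigma$ strictly greater than $J_\Sigma$ admits a cover $R' \covered R$ with $J_\Sigma \le R'$; since $H_\Sigma \in S(J_\Sigma) \subseteq S(R')$, this cover uses some $H_1 \ne H_\Sigma$, so $R$ fails join-irreducibility. On the other hand $J_\Sigma$ covers only $r_{H_\Sigma}(J_\Sigma)$, since any additional covered region would yield a strictly smaller element of $U_\Sigma$, contradicting minimality. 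The two maps are then mutually inverse: $J_\Sigma$ has $\Sigma$ as its unique lower shard by construction, and conversely any join-irreducible $J$ with lower shard $\Sigma$ lies in $U_\Sigma$, so $J \ge J_\Sigma$, and the uniqueness just established forces $J = J_\Sigma$.

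The main obstacle is establishing that $U_\Sigma$ is closed under the lattice meet of $\P(\A,B)$. Since meet in the poset of regions is not simply the intersection of separating sets, this requires careful geometric input from simpliciality and from the rank-two definition of ``cuts.'' Once this step is in hand, the rest of the argument is lattice-theoretic bookkeeping together with the sign-vector analysis for facets of simplicial regions.
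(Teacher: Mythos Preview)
The paper does not prove this proposition; it is quoted from \cite[Propositions~3.2, 3.5]{congruence}, so there is no in-paper argument to compare against.  Your outline is the right shape, and your Step~1 (join-irreducible $J\mapsto$ unique lower shard via the sign vector on the facet $J\cap J_*$) is correct as written.

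There are, however, two genuine gaps rather than one.  You flag the first yourself: closure of $U_\Sigma$ under meet.  The second is hidden in the sentence ``any additional covered region would yield a strictly smaller element of $U_\Sigma$.''  This asserts that if $J_\Sigma\in U_\Sigma$ covers some $R'$ across a hyperplane $H'\neq H_\Sigma$, then $R'\in U_\Sigma$.  That does \emph{not} follow from meet-closure (you cannot apply closure to the pair $\{J_\Sigma,R'\}$ without already knowing $R'\in U_\Sigma$), and it is not automatic: you must show that $R'$ again has a facet in $H_\Sigma$, and that this facet lies in the \emph{same} shard $\Sigma$.  The codimension-$2$ face $G=(J_\Sigma\cap\Sigma)\cap(J_\Sigma\cap R')$ lies in $H_\Sigma\cap H'$, but the second facet of $R'$ through $G$ is supported by some hyperplane of the rank-two subarrangement containing $H_\Sigma$ and $H'$, not \textit{a priori} by $H_\Sigma$; and even when it is, one must still check that $H'$ does not cut $H_\Sigma$, lest $G$ sit on the boundary of $\Sigma$ rather than in its relative interior.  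Both of your gaps therefore reduce to the same rank-two analysis of how facets of adjacent simplicial regions interact with the cutting relation, and this is exactly where the substance of the proof in \cite{congruence} lives; neither piece is ``lattice-theoretic bookkeeping.''
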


The following theorem is \cite[Theorem~3.6]{shardint}.

\begin{theorem}\label{shard CJR}
If $\A$ is a simplicial arrangement and $R$ is a region, then the canonical join representation of $R$ in $\P(\A,B)$ is $\set{J_\Sigma:\Sigma\text{ is a lower shard of }R}$.
\end{theorem}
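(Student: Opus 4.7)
Since $\P(\A,B)$ is semidistributive by \cref{semid}, every region $R$ has a unique canonical join representation. In any finite semidistributive lattice, the canonical joinands of an element $x$ are in natural bijection with the elements covered by $x$: for each $x'\covered x$ there is a unique canonical joinand $j$ of $x$ satisfying $j\le x$, $j\not\le x'$, and $j_*\le x'$. My plan is to verify these defining properties for $J_\Sigma$ paired with the cover of $R$ associated with each lower shard $\Sigma$.

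First I would set up the correspondence between the lower shards of $R$ and the elements covered by $R$. Each cover $R'\covered R$ corresponds to a unique lower facet $F$ of $R$, with $S(R')=S(R)\setminus\set{H}$ for $H$ the hyperplane containing $F$. Since shards are the connected components of a hyperplane with its cuts removed, $F$ lies in a unique shard $\Sigma\subseteq H$, which is by definition a lower shard of $R$; conversely every lower shard of $R$ meets $R$ in such a lower facet. Write $R'_\Sigma$ for the cover of $R$ associated to $\Sigma$, so $S(R'_\Sigma)=S(R)\setminus\set{H_\Sigma}$.

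Next, for each lower shard $\Sigma$ of $R$, I would verify the three properties of $J_\Sigma$ relative to $R'_\Sigma$: (a) $J_\Sigma\le R$ because $R$ is an upper region of $\Sigma$ and $J_\Sigma$ is the minimum such by \cref{j shard}; (b) $J_\Sigma\not\le R'_\Sigma$ because $H_\Sigma\in S(J_\Sigma)$ while $H_\Sigma\notin S(R'_\Sigma)$; and (c) $(J_\Sigma)_*\le R'_\Sigma$. For (c) I would use that $J_\Sigma$ is join-irreducible, so it has a unique lower cover and hence a unique lower facet. Since $J_\Sigma\cap\Sigma$ is a facet of $J_\Sigma$ lying in the hyperplane $H_\Sigma\in S(J_\Sigma)$, it is itself a lower facet, and by uniqueness it must be the lower facet; therefore $S((J_\Sigma)_*)=S(J_\Sigma)\setminus\set{H_\Sigma}\subseteq S(R'_\Sigma)$.

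By the uniqueness characterization of canonical joinands recalled in the first paragraph, $J_\Sigma$ is the canonical joinand of $R$ associated with the cover $R'_\Sigma$. Since $\Sigma\mapsto R'_\Sigma$ is a bijection between the lower shards of $R$ and the covers of $R$, the canonical join representation of $R$ is precisely $\set{J_\Sigma:\Sigma\text{ a lower shard of }R}$. I expect the main technical hurdle to be step (c): identifying the unique lower facet of the join-irreducible region $J_\Sigma$ with the facet $J_\Sigma\cap\Sigma$, which rests on the construction of $J_\Sigma$ via \cref{j shard} as the minimum upper region of $\Sigma$ together with simpliciality of the arrangement.
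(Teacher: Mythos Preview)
The paper does not prove this theorem; it is quoted without proof as \cite[Theorem~3.6]{shardint}.  Your argument is correct and is essentially the standard proof.

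One sharpening is worth making explicit.  As stated, your first paragraph says that for each cover $x'\covered x$ there is a unique \emph{canonical joinand} $j$ of $x$ with $j\le x$, $j\not\le x'$, $j_*\le x'$.  But to deduce that $J_\Sigma$ is a canonical joinand from properties (a)--(c), you need the stronger fact that there is a unique \emph{join-irreducible} with these three properties (and that it is then automatically a canonical joinand).  This does hold in any finite join-semidistributive lattice: if $j$ and $j'$ are two such join-irreducibles, then $x'\vee j=x'\vee j'=x$, so SD$_\vee$ gives $x'\vee(j\wedge j')=x$, hence $j\wedge j'\not\le x'$; but $j\wedge j'\le j$, and anything strictly below $j$ lies below $j_*\le x'$, so $j\wedge j'=j$, and symmetrically $j'=j$.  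With this in hand, your verifications of (a)--(c) finish the proof.  Your step~(c) is exactly right: the last sentence of \cref{j shard} says the unique lower shard of $J_\Sigma$ is $\Sigma$, so the unique lower facet of $J_\Sigma$ lies in $H_\Sigma$ and $S((J_\Sigma)_*)=S(J_\Sigma)\setminus\{H_\Sigma\}\subseteq S(R)\setminus\{H_\Sigma\}=S(R'_\Sigma)$.
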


Recall from \cref{lat sec} that two join-irreducible elements $j_1,j_2$ of a semidistributive lattice $L$ are compatible if and only if there exists an element of $L$ whose canonical join representation is $\set{j_1,j_2}$ if and only if there exists an element of $L$ (not necessarily the same element) whose canonical join representation contains $\set{j_1,j_2}$.
We will say that two shards are \newword{compatible} if and only if the intersection of their relative interiors is nonempty.

\begin{proposition}\label{shard compat CJR}
Suppose $\A$ is a simplicial arrangement.
Two shards are compatible if and only if the two corresponding join-irreducible regions in $\P(\A,B)$ are compatible.  
\end{proposition}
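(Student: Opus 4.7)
The plan is to reduce both notions of compatibility to a single geometric condition on regions of $\A$. By \cref{shard CJR}, $J_{\Sigma_1}$ and $J_{\Sigma_2}$ are compatible in $\P(\A,B)$ exactly when some region $R$ has both $\Sigma_1$ and $\Sigma_2$ among its lower shards, so the task is to prove the equivalence
\[
\mathrm{relint}(\Sigma_1)\cap\mathrm{relint}(\Sigma_2)\neq\emptyset \iff \text{some region $R$ has both $\Sigma_1$ and $\Sigma_2$ as lower shards.}
\]
Assuming $H_{\Sigma_1}\neq H_{\Sigma_2}$ (the other case is trivial), set $U=H_{\Sigma_1}\cap H_{\Sigma_2}$ and let $\A'=\set{H\in\A:H\supset U}$ be the rank-two subarrangement containing them.

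The key structural observation is that in either direction, $H_{\Sigma_1}$ and $H_{\Sigma_2}$ must be the two basic hyperplanes of $\A'$. Every non-basic hyperplane of $\A'$ is cut by each basic one, and since basic hyperplanes contain all of $U$, any such cut passes through every point of $U$; in particular, if $H_{\Sigma_i}$ were non-basic, then no point of $U$ could lie in $\mathrm{relint}(\Sigma_i)$, and on the other hand $R$ could not sit locally in the correct wedge bounded by $H_{\Sigma_1}$ and $H_{\Sigma_2}$. For the forward direction, I start from a point $p\in\mathrm{relint}(\Sigma_1)\cap\mathrm{relint}(\Sigma_2)$, perturb it slightly within $U$ to ensure it avoids all hyperplanes of $\A\setminus\A'$ (which meet $U$ only in codimension-one-in-$U$ subspaces), and then push $p$ by a small vector whose image in $\reals^d/U$ lies in the open wedge of $\A'$ opposite to the basic wedge $B'$. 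Since non-basic hyperplanes of $\A'$ do not meet the interior of this opposite wedge, the perturbation lands in the interior of a single region $R$ with $H_{\Sigma_1},H_{\Sigma_2}\in S(R)$; moreover each facet $R\cap H_{\Sigma_i}$ lies in $\Sigma_i$, because a facet is a connected subset of $H_{\Sigma_i}$ that cannot cross a cut (crossing would split $R$), and $p$ is a limit point of $R$ lying in $\mathrm{relint}(\Sigma_i)$.

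For the converse, given $R$ with $\Sigma_1$ and $\Sigma_2$ as lower shards, the two facets $R\cap H_{\Sigma_i}$ of the simplicial cone $R$ share a $(d-2)$-dimensional face $F\subset U$. A point $p$ in the relative interior of $F$ lies on no hyperplane of $\A\setminus\A'$ (by a dimension count inside $U$), and on no hyperplane of $\A'$ that cuts either $H_{\Sigma_1}$ or $H_{\Sigma_2}$ (by the basic/non-basic dichotomy, since both $H_{\Sigma_i}$ are basic here), so $p\in\mathrm{relint}(\Sigma_1)\cap\mathrm{relint}(\Sigma_2)$. The main obstacle is the bookkeeping for the extra non-basic hyperplanes of $\A'$ when $|\A'|>2$: these pass through every point of $U$, so one must exploit the asymmetry of ``cuts'' to argue that they neither spoil the shard-interior condition at $p$ nor subdivide the wedge opposite $B'$ into smaller regions.
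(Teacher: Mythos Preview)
Your proof is correct and follows essentially the same route as the paper's: both reduce (via \cref{shard CJR}) to the equivalence between ``$\mathrm{relint}(\Sigma_1)\cap\mathrm{relint}(\Sigma_2)\neq\emptyset$'' and ``some region has both $\Sigma_1,\Sigma_2$ as lower shards,'' and both directions hinge on the observation that $H_{\Sigma_1}$ and $H_{\Sigma_2}$ are the basic hyperplanes of $\A'$, together with a perturbation away from $B$. Your write-up is more explicit than the paper's (which compresses the forward direction into a one-line parenthetical); one small remark is that your phrase ``by a dimension count inside $U$'' for why $p\in\mathrm{relint}(F)$ avoids $\A\setminus\A'$ really rests on the face structure of $R$ (any $H\in\A$ meeting $\mathrm{relint}(F)$ must contain $F$, hence $U$), not merely on dimensions, but the claim itself is correct.
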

\begin{proof}
By \cref{shard CJR}, two join-irreducible regions $J_1$ and $J_2$ in a simplicial poset of regions are compatible if and only if $\Sigma_{J_1}$ and $\Sigma_{J_2}$ are the two lower shards of some region $R$.
In that case, the hyperplanes containing $\Sigma_{J_1}$ and $\Sigma_{J_2}$ are the two basic hyperplanes in the rank-two subarrangement containing them, so they don't cut each other.
Since their intersection has codimension $2$ (because it contains the intersection of two facets of the simplicial region $R$), we see that the relative interiors of $\Sigma_{J_1}$ and $\Sigma_{J_2}$ have nonempty intersection. 
Conversely, if there are two shards $\Sigma_1$ and $\Sigma_2$ whose relative interiors have nonempty intersection, then there is a region~$Q$ having both as lower shards.  
(Such a region $Q$ can be found by starting at a generic point in the intersection of the two relative interiors and moving a small distance in the direction away from the interior of $B$.)
Then \cref{shard CJR} says that the corresponding join-irreducible regions are in the canonical join representation of $Q$, and thus are compatible.
\end{proof}

We have seen that shards and their incidences encode the canonical join representations of regions in $\P(\A,B)$.
We will also see that they encode the forcing (pre-)order on join-irreducible elements in $\P(\A,B)$.
Define the \newword{shard digraph} to be the directed graph whose vertices are the shards, with $\Sigma_1\to\Sigma_2$ if and only if $H_{\Sigma_1}$ cuts $H_{\Sigma_2}$ and $\Sigma_1\cap\Sigma_2$ has codimension $2$. 
The following is \cite[Theorem~9-7.17]{regions9}.

\begin{theorem}\label{shard digraph}
If $\A$ is simplicial and $\Sigma_1$ and $\Sigma_2$ are shards, then $J_{\Sigma_1}$ forces $J_{\Sigma_2}$ if and only if there is a directed path in the shard digraph from $\Sigma_1$ to $\Sigma_2$.
\end{theorem}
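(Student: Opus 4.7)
The plan is to prove both implications separately, exploiting the local rank-two structure that the shard digraph encodes.

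For the forward direction (a directed path implies forcing), transitivity of the forcing relation reduces the task to handling a single arrow $\Sigma_1 \to \Sigma_2$. By definition, the hyperplanes $H_{\Sigma_1}$ and $H_{\Sigma_2}$ lie in a common rank-two subarrangement $\A'$ in which $H_{\Sigma_1}$ is basic and $H_{\Sigma_2}$ is not, and $\Sigma_1 \cap \Sigma_2$ has codimension two. I would pass to a transverse two-dimensional slice at a generic point of $\Sigma_1 \cap \Sigma_2$; in a small neighborhood of that point only the hyperplanes of $\A'$ contribute, so the local picture reduces to the rank-two poset of regions. This rank-two poset is a zigzag whose join-irreducibles are explicitly understood; in the slice, $\Sigma_1$ and $\Sigma_2$ appear as facets of a common region in such a way that the cover relations around the codimension-two flat force any lattice congruence contracting $J_{\Sigma_1}$ to also contract $J_{\Sigma_2}$. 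Combined with \cref{j shard} to translate between shards and join-irreducible regions, this gives the forward direction.

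For the converse (forcing implies the existence of a path), I would construct a congruence that realizes the path closure. Let $\mathcal{D}(\Sigma_1)$ denote the set of all shards reachable from $\Sigma_1$ by a directed path in the shard digraph. Define an equivalence relation $\Theta$ on $\P(\A,B)$ by declaring two regions equivalent when they can be connected by a sequence of cover relations crossing only shards of $\mathcal{D}(\Sigma_1)$. The goal is to verify that $\Theta$ is a lattice congruence whose contracted join-irreducible elements are exactly $\set{J_\Sigma : \Sigma \in \mathcal{D}(\Sigma_1)}$. Once this is established, the forward direction guarantees that every shard in $\mathcal{D}(\Sigma_1)$ is forced by $\Sigma_1$, while the existence of the congruence $\Theta$, which contracts $J_{\Sigma_1}$ but nothing corresponding to a shard outside $\mathcal{D}(\Sigma_1)$, shows that nothing else is forced.

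The main obstacle is verifying that $\Theta$ is a lattice congruence. The three conditions from \cref{lat sec}---each class being an interval, the bottom-map being order-preserving, and the top-map being order-preserving---must hold simultaneously, and they translate into a local compatibility condition at every codimension-two intersection $H_{\Sigma'} \cap H_{\Sigma''}$: whenever the shard in a basic hyperplane of the containing rank-two subarrangement lies in $\mathcal{D}(\Sigma_1)$, the adjacent shards in the non-basic hyperplanes at that flat must lie in $\mathcal{D}(\Sigma_1)$ as well. This is precisely the closure of $\mathcal{D}(\Sigma_1)$ under outgoing arrows of the shard digraph, so the definition of $\mathcal{D}(\Sigma_1)$ supplies exactly what is needed. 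The technical heart of the proof is carrying out this local check uniformly at every codimension-two flat of $\A$, using the simpliciality of $\A$ to ensure that no additional constraints arise beyond those dictated by the shard digraph.
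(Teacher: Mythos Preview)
The paper does not prove this theorem; it is quoted from an external source as \cite[Theorem~9-7.17]{regions9} and used as background. So there is no proof in the paper to compare your proposal against.

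That said, your sketch is broadly in the spirit of how such results are established in the literature, but it glosses over the most delicate point. In the converse direction you assert that the order-theoretic conditions for $\Theta$ to be a congruence ``translate into a local compatibility condition at every codimension-two intersection'' and that this local condition is exactly closure under outgoing shard arrows. This equivalence is not automatic: it is a nontrivial theorem (essentially the ``polygonality'' of $\P(\A,B)$ together with the characterization of congruences on polygonal lattices) that the global congruence axioms reduce to rank-two checks. Simpliciality is used to obtain that $\P(\A,B)$ is polygonal and congruence-uniform, but your sketch does not indicate where or how this enters. Without that ingredient one cannot conclude that an arbitrary set of shards closed under arrows yields a congruence; there could in principle be higher-codimension obstructions, and ruling them out is precisely the content of the polygonality machinery developed in \cite{regions9}. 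Your forward direction is also thin: the phrase ``the cover relations around the codimension-two flat force any lattice congruence contracting $J_{\Sigma_1}$ to also contract $J_{\Sigma_2}$'' is the right idea, but making it precise requires identifying the relevant polygon interval in $\P(\A,B)$ and invoking the known behavior of congruences on polygons, which you have not spelled out.
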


Said another way, the map $\Sigma\mapsto J_\Sigma$ is an isomorphism from the reflexive-transitive closure of the shard digraph to the forcing (pre-)order defined in \cref{lat sec}.
We emphasize that, even when the forcing pre-order is a partial order, shard arrows are not necessarily cover relations in the forcing order.
Instead, there may be pairs of shards that are related by a shard arrow and also by a longer path in the shard digraph.

We now state and prove the main theorem of this section, a technical result that rephrases the definition of the shard digraph in a way that is useful (in later sections) for describing forcing in terms of ``subarc'' relationships between arcs.
%more compatible with the construction of noncrossing arc diagrams.

\begin{theorem}\label{Emily's thm}
Suppose $\A$ is a simplicial hyperplane arrangement and suppose $\Sigma_1$ and $\Sigma_2$ are shards.
Then $\Sigma_1\to\Sigma_2$ in the shard digraph if and only if there exists a shard $\Sigma'_1$ satisfying the following conditions:
\begin{enumerate}[\qquad\rm(i)]
\item \label{Sig Sig' compat}
$\Sigma_1$ and $\Sigma'_1$ are compatible,
\item \label{basic nonbasic}
$H_{\Sigma_2}$ is in the rank-two subarrangement containing $H_{\Sigma_1}$ and $H_{\Sigma'_1}$ but is not basic in that subarrangement, and
\item $\Sigma_1\cap\Sigma'_1\subseteq\Sigma_2$.
\end{enumerate}
\end{theorem}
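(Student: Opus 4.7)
The plan is to prove the two directions of the equivalence separately, with the backward direction essentially a definition-chase and the forward direction requiring an explicit construction of $\Sigma'_1$. For the backward direction, assume $\Sigma'_1$ satisfies (i)-(iii). Condition (ii) implicitly requires $H_{\Sigma_1}$ and $H_{\Sigma'_1}$ to be distinct, and (i) then forces both to be basic in their rank-two subarrangement (else a cut would exclude a common point from the two relative interiors) and forces $\Sigma_1 \cap \Sigma'_1$ to have codimension $2$. Combined with (ii), this rank-two subarrangement must equal the unique rank-two subarrangement containing $\{H_{\Sigma_1}, H_{\Sigma_2}\}$, in which $H_{\Sigma_1}$ is basic and $H_{\Sigma_2}$ is not, so $H_{\Sigma_1}$ cuts $H_{\Sigma_2}$. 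Condition (iii) gives $\Sigma_1 \cap \Sigma_2 \supseteq \Sigma_1 \cap \Sigma'_1$, which has codimension $2$, so $\Sigma_1 \to \Sigma_2$.

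For the forward direction, let $\A'$ be the rank-two subarrangement of $\{H_{\Sigma_1}, H_{\Sigma_2}\}$, set $U = H_{\Sigma_1} \cap H_{\Sigma_2}$, and let $H'$ be the basic hyperplane of $\A'$ other than $H_{\Sigma_1}$. Choose $p$ in the relative interior of $\Sigma_1 \cap \Sigma_2$ (a codimension-$2$ subset of $\reals^d$), generic in the sense that $p$ avoids every hyperplane outside $\A'$. Define $\Sigma'_1$ to be the shard of $H'$ containing $p$. Because $H_{\Sigma_1}$ and $H'$ are basic in $\A'$ (hence not cut by any hyperplane of $\A'$), and the genericity of $p$ rules out any non-$\A'$ hyperplane cutting them at $p$, the point $p$ lies in the relative interiors of both $\Sigma_1$ and $\Sigma'_1$, giving (i). Condition (ii) is immediate.

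The remaining condition (iii), $\Sigma_1 \cap \Sigma'_1 \subseteq \Sigma_2$, is the main technical step and is where the simpliciality of $\A$ plays its role. The strategy is to pass to the region $R$ of $\A$ whose local picture at $p$ in the 2D slice of $\A'$ is the sector opposite to the one containing $B$. Such an $R$ has $\Sigma_1$ and $\Sigma'_1$ as its two (local) lower shards, and the codimension-$2$ face $R \cap U$ coincides with $R \cap H_{\Sigma_2}$ since $H_{\Sigma_2}$ is not a facet hyperplane of $R$ but $U \subset H_{\Sigma_2}$ forces the minimal face of $R$ on $H_{\Sigma_2}$ to be the intersection of the two facets on $H_{\Sigma_1}$ and $H'$. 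Since $R$ lies on one side of every hyperplane of $\A$, the face $R \cap H_{\Sigma_2}$ lies in a single shard of $H_{\Sigma_2}$, and since $p$ is in the face, that shard is $\Sigma_2$. The main obstacle will be propagating this containment from $R \cap U$ to all of $\Sigma_1 \cap \Sigma'_1$: one must show that the simplicial structure of $\A$ prevents $\Sigma_1 \cap \Sigma'_1$ from extending within $U$ beyond $\Sigma_2$, which amounts to ruling out any hyperplane cutting $H_{\Sigma_2}$ along a locus that crosses $U$ without simultaneously cutting $H_{\Sigma_1}$ or $H'$ along the same locus.
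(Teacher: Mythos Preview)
Your backward direction is correct and essentially matches the paper's argument.

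For the forward direction, your construction of $\Sigma'_1$ (via the other basic hyperplane $H'$ of $\A'$ and a generic point $p\in\Sigma_1\cap\Sigma_2$ avoiding hyperplanes outside $\A'$) is exactly what the paper does, and your verification of (i) and (ii) is fine. The gap is in condition~(iii). You correctly identify that the local argument at $R\cap U$ only gives containment on a single face, and you frankly admit that the ``main obstacle'' is propagating this to all of $\Sigma_1\cap\Sigma'_1$; but you then stop after restating the obstacle rather than resolving it. The reformulation you propose---ruling out hyperplanes that cut $H_{\Sigma_2}$ across $U$ without cutting $H_{\Sigma_1}$ or $H'$ there---is not obviously easier than the original statement, and you give no mechanism for proving it from simpliciality.

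The paper closes this gap by a different and much shorter route: it invokes the fact that, for simplicial $\A$, the shard intersection order is \emph{graded by codimension} \cite[Proposition~5.1]{shardint}. From the generic point $p$ one sees that $\Sigma_1\cap\Sigma'_1\cap\Sigma_2$ has codimension~$2$ (a small ball about $p$ meets $\Sigma'_1$ exactly where it meets $H'$, and $H'\supseteq\Sigma_1\cap\Sigma_2$). Also $\Sigma_1\cap\Sigma'_1$ has codimension~$2$ because $H_{\Sigma_1}\neq H'$. Since $\Sigma_1\cap\Sigma'_1\cap\Sigma_2\subseteq\Sigma_1\cap\Sigma'_1$ and both are shard intersections of the same codimension, gradedness forces them to be equal, giving $\Sigma_1\cap\Sigma'_1\subseteq\Sigma_2$. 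This bypasses entirely the region-by-region analysis you set up and is where simpliciality actually enters the argument.
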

Note that the compatibility of $\Sigma_1$ and $\Sigma_1'$ in \eqref{Sig Sig' compat} implies that there is a region having both $\Sigma_1$ and $\Sigma_1'$ as lower shards.
In particular, $H_{\Sigma_1}$ and $H_{\Sigma'_1}$ are basic in the rank-two subarrangement containing them.

A \newword{shard intersection} is an intersection of a set of shards.  
The \newword{shard intersection order} is the reverse containment order on the set of all shard intersections.
The proof of \cref{Emily's thm} uses the fact that the reverse containment order on the set of all intersections of shards is graded by codimension \cite[Proposition~5.1]{shardint}.
% The proof of \cref{Emily's thm} uses a fact about the shard intersection order, which is \cite[Proposition~5.1]{shardint}:
% The reverse containment order on the set of all intersections of shards is graded by codimension.

\begin{proof}[Proof of \cref{Emily's thm}]
Suppose $\Sigma_1\to\Sigma_2$.
Write $H_1$ for $H_{\Sigma_1}$ and $H_2$ for $H_{\Sigma_2}$.
Then in particular $H_1$ is basic in the rank-two subarrangement $\A'$ containing $H_1$ and $H_2$, but $H_2$ is not basic in $\A'$.
Let $H_1'$ be the other basic hyperplane in $\A'$.
Since $\Sigma_1\cap\Sigma_2$ has codimension $2$ and is contained in the intersection of the hyperplanes of $\A'$, we can find a point $\x\in\Sigma_1\cap\Sigma_2$ that is not contained in any hyperplane in $\A\setminus\A'$.
Thus, since $H_2$ does not cut $H_1$, the point $\x$ is in the relative interior of $\Sigma_1$.
Since $H_2$ also does not cut $H_1'$, $\x$ is also in the relative interior of some shard $\Sigma'_1$ in $H_1'$.
By definition, $\Sigma_1$ and $\Sigma'_1$ are compatible.  
Since $\x$ is in the relative interior of $\Sigma'_1$, there is some ball about $\x$ whose intersection with $\Sigma_1'$ is the same as its intersection with $H'_1$.
Since $H'_1$ contains $\Sigma_1\cap\Sigma_2$, we see that $\Sigma_1\cap\Sigma'_1\cap\Sigma_2$ also has codimension~$2$.
Since $H_1$ and $H'_1$ are not the same hyperplane, $\Sigma_1\cap\Sigma'_1$ has codimension $2$ as well.
Since the shard intersection order is graded by codimension and $\Sigma_1\cap\Sigma'_1\cap\Sigma_2\subseteq\Sigma_1\cap\Sigma'_1$, these two intersections must in fact be the same.
Thus $\Sigma_1\cap\Sigma'_1\subseteq\Sigma_2$.

Conversely, suppose there exists $\Sigma'_1$ such that the three conditions of the theorem hold.
Then certainly $H_{\Sigma_1}$ cuts $H_{\Sigma_2}$, so we must show that $\Sigma_1\cap\Sigma_2$ has codimension~$2$. 
Since $H_{\Sigma_1}\neq H_{\Sigma_2}$, certainly the codimension of $\Sigma_1\cap\Sigma_2$ is at least~$2$.
But the fact that $\Sigma_1$ and $\Sigma'_1$ are compatible means that they intersect in their relative interiors, so $\Sigma_1\cap\Sigma'_1$ has codimension~$2$.
But since $\Sigma_1\cap\Sigma'_1\subseteq\Sigma_2$, we have $\Sigma_1\cap\Sigma'_1=\Sigma_1\cap\Sigma'_1\cap\Sigma_2$.
In particular, the codimension of $\Sigma_1\cap\Sigma_2$ cannot be more than~$2$.
\end{proof}

The third condition in \cref{Emily's thm} can be restated as the existence of a certain order relation in the shard intersection order (in the sense of \cite{shardint}).
In this paper, we will use the third condition directly, without explicitly working with the shard intersection order. 

\section{Noncrossing arc diagrams of type A}\label{A sec}
In this section, we review results of \cite{arcs} on noncrossing arc diagrams for permutations in $S_n$.  %, and describe how \cref{Emily's thm} applies to the proof of one result from~\cite{arcs}.

The Coxeter group of type $A_{n-1}$ can be realized as the group of permutations of $\set{1,\ldots,n}$.  
The \newword{one-line notation} of $\pi\in S_n$ is the sequence $\pi_1\pi_2\cdots \pi_n$ where $\pi_i=\pi(i)$.
One can also realize $S_n$ as a reflection group in $\reals^n$ in the usual way, with each simple reflection $s_i=(i\,\,\,i+1)$ acting as a reflection orthogonal to $\e_{i+1}-\e_i$.
(The $\e_i$ are the standard basis vectors.)
The reflections in $S_n$ are the transpositions $(i\,\,j)$ for $1\le i<j\le n$.

The weak order on $S_n$ has cover relations given by $\pi\covered \sigma$ if $\sigma$ is obtained from $\pi$ by exchanging the entries $\pi_i$ and $\pi_{i+1}$ for some $i\in\set{1,\ldots,n-1}$ such that $\pi_i<\pi_{i+1}$.
The cover reflection of $\sigma$ associated to this cover is $(\pi_i\,\,\,\pi_{i+1})$; multiplying $\pi$ on the left by this reflection swaps the entries $\pi_i$ and $\pi_{i+1}$.
A join-irreducible element of $S_n$ is a permutation whose one-line notation has exactly one descent $\pi_i>\pi_{i+1}$.

We now define noncrossing arc diagrams.
We place $n$ distinct points on a vertical line, identified with the numbers $1,\ldots,n$ from bottom to top.
An \newword{arc} is a curve connecting a point $q\in\set{1,\ldots,n}$ to a strictly lower point $p\in\set{1,\ldots,n}$, moving monotone downwards from $q$ to $p$ without touching any other numbered point, but rather passing to the left of some points and to the right of others.  
A \newword{noncrossing arc diagram} is a collection of arcs that don't intersect, except possibly at their endpoints, such that no two arcs share the same upper endpoint or the same lower endpoint.
The combinatorial data determining an arc consists of which pair of points it connects and which points in between are left or right of the arc.
Two arcs are combinatorially equivalent if they have the same combinatorial data.
We consider arcs and noncrossing arc diagrams up to combinatorial equivalence.
When we need to distinguish these diagrams from the objects defined later (for Coxeter groups of type D), we will refer to them as \newword{type-A arcs} and \newword{type-A noncrossing arc diagrams}. 

%
%We will see that noncrossing arc diagrams on $n$ points are in bijection with permutations of $\set{1,\ldots,n}$, and the purpose of this paper is to describe similar diagrams for signed permutations (type~B) and for even-signed permutations (type~D).
%When the distinction needs to be made, we will refer to the arcs and noncrossing arc diagrams defined above as \newword{type-A arcs} and \newword{type-A noncrossing arc diagrams}.

Noncrossing arc diagrams can also be understood in terms of a compatibility relation on arcs.
We say two arcs are \newword{compatible} if they don't intersect except possibly at one common endpoint, and if they don't share the same upper endpoint or the same lower endpoint.
A noncrossing arc diagram is the same thing as a set of pairwise compatible arcs.
(Certainly the arcs in a noncrossing arc diagram are pairwise compatible, and \cite[Proposition~3.2]{arcs} verifies that any collection of pairwise compatible arcs is combinatorially equivalent to some noncrossing arc diagram.)

We now describe the bijection $\delta$ from $S_n$ to the set of noncrossing arc diagrams on $n$ points.
Given $\pi=\pi_1\cdots\pi_n\in S_n$, write each entry $\pi_i$ at the point $(i,\pi_i)$ in the plane.
For every $i$ such that $\pi_i>\pi_{i+1}$, draw a straight line segment from $\pi_i$ to $\pi_{i+1}$.
These line segments become arcs:  We move the numbers $1,\ldots,n$ horizontally to put them into a single vertical line, allowing the line segments to curve, so that they avoid passing through any numbers and one another.
\begin{figure}
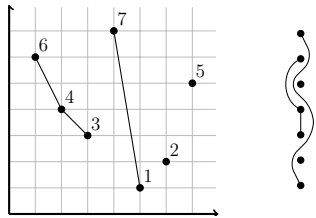

\centering
\includestandalone[height=8em]{figs/a/delta.lines}
\qquad
\includestandalone[height=7em]{figs/a/delta.ncad}
% \includegraphics[width=0.45\linewidth]{classarcs_ex_Adelta.png}
%\caption{The map from permutations to type-A noncrossing arc diagrams for $\pi=6437125$}
\caption{The map $\delta$ applied to $6437125$}
\end{figure}\noindent
We define $\delta(\pi)$ to be the resulting noncrossing arc diagram.

We can alternatively describe $\delta(\pi)$ by listing its arcs:
For every descent $\pi_i>\pi_{i+1}$, there is an arc with endpoints $\pi_{i}$ and $\pi_{i+1}$.  
This arc goes right of every entry $\pi_j$ with $\pi_{i+1}<\pi_j<\pi_i$ and $j<i$ and left of every entry $\pi_j$ with $\pi_{i+1}<\pi_j<\pi_i$ and $j>i$.
The following is \cite[Theorem~3.1]{arcs}.

\begin{theorem}\label{main A}
The map $\delta$ is a bijection from $S_n$ to the set of noncrossing arc diagrams on $n$ points.
\end{theorem}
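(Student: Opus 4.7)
The plan is to exhibit an explicit two-sided inverse $\epsilon$ to $\delta$ and verify its properties.

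First, I would check that $\delta(\pi)$ really is a noncrossing arc diagram. The condition that no two arcs share an upper or lower endpoint is immediate: an entry $\pi_i$ can serve as an upper endpoint only via the descent at position $i$ (if any), and similarly for lower endpoints, so each value plays each role at most once. For the noncrossing condition, consider two descents at positions $i<j$. If $j=i+1$, the two arcs share only the point $\pi_{i+1}=\pi_j$. If $j>i+1$, the straight segments in the plot $(k,\pi_k)$ occupy disjoint $x$-intervals and are disjoint in the plane; what remains is to check that the homotopy collapsing the columns to a single vertical line can be executed without introducing crossings. This reduces to verifying that the prescribed left/right data for any pair of arcs are pairwise compatible in the sense of the compatibility relation on arcs described above.

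Next I would construct $\epsilon\colon D\mapsto\pi$. Each arc $\alpha$ of $D$, with upper endpoint $q$ and lower endpoint $p$, right-set $R_\alpha$ and left-set $L_\alpha$ partitioning $\{p+1,\ldots,q-1\}$, imposes three constraints on the one-line notation of $\epsilon(D)$: the value $q$ must be immediately followed by $p$; every element of $R_\alpha$ must appear strictly before $q$; and every element of $L_\alpha$ must appear strictly after $p$. I would show that the full system of such constraints, taken over all arcs of $D$, admits a unique simultaneous solution, and define $\epsilon(D)$ to be it. In practice one proceeds inductively, peeling off an outermost arc at each stage and rebuilding the one-line notation step by step; equivalently, one inserts the values $1,2,\ldots,n$ into a growing sequence, using the arcs to determine each insertion position.

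The crux is the consistency and completeness of this construction. Consistency uses the noncrossing hypothesis essentially: a conflict among constraints would manifest as a forced crossing of two arcs of $D$, contradicting the hypothesis. Completeness, meaning that the constraints pin down a single permutation rather than a family, is handled by the induction, since after removing an outermost arc one deals with a strictly smaller diagram. Once $\epsilon$ is well defined, verifying $\epsilon(\delta(\pi))=\pi$ and $\delta(\epsilon(D))=D$ is routine bookkeeping: by construction the descents of a permutation and the arcs of its image under $\delta$ correspond with matching left/right data. The main obstacle is the consistency and completeness step, which is where the noncrossing condition first does substantive work.
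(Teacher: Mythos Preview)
Your overall strategy---show $\delta$ lands in noncrossing arc diagrams, then build an explicit inverse $\epsilon$---matches the paper's (this theorem is quoted from \cite{arcs}, and the present paper only describes the inverse map). But the inverse you sketch is genuinely different from the one the paper uses, and your version is underspecified at the key step.

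The paper's inverse is a concrete recursive algorithm organized around \emph{blocks} (connected components of the diagram) rather than individual arcs. A noncrossing arc diagram always has at least one \emph{left block}: a block such that no other arc or numbered point lies horizontally to its left. The left blocks are totally ordered by height. The inverse reads off the permutation by repeatedly taking the lowest left block, writing its numbered points in decreasing order (this recovers one maximal descending run of $\pi$), deleting that block, and continuing. This works because each block corresponds exactly to a maximal descending run, and the ``nothing to its left'' condition pins down which run comes first in the one-line notation.

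Your constraint-based description is correct as far as it goes: an arc from $q$ down to $p$ does force $q$ immediately before $p$, with $R_\alpha$ before and $L_\alpha$ after. But your inductive step---``peel off an outermost arc'' or ``insert $1,2,\ldots,n$ using the arcs to determine positions''---is not well-defined as stated. Which arc is ``outermost''? When inserting value $k$, exactly which constraints determine its slot, and why is that slot unique? Without a precise rule here you cannot verify consistency or uniqueness. The paper's left-block recursion is exactly such a rule: it tells you, at each stage, which chunk of the one-line notation to write next and why it is forced. If you want to salvage your approach, you should either adopt the left-block idea or give a comparably explicit procedure (e.g., prove that the partial order generated by ``$a$ must precede $b$'' from all your constraints has a unique linear extension, which amounts to rediscovering the left-block order).
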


The proof of \cite[Theorem~3.1]{arcs} includes an explicit description of the inverse map, which we quote here.
(An example is shown in \cref{delta inv fig}.)  
A noncrossing arc diagram has one or more \newword{blocks} (the connected components of the diagram, viewed as an embedded graph).
Each block is a single numbered point or a sequence of arcs sharing endpoints.
Reading each block from top to bottom, we recover the maximal descending runs of the permutation.
The noncrossing arc diagram has at least one \newword{left block}, meaning a block ``with nothing to its left''.
More formally, no other arc or numbered point can be reached from that block by moving horizontally to the left.
The left blocks can be totally ordered from lowest to highest. % by bottom numbered point. 
One can obtain $\pi$ from $\delta(\pi)$ recursively by taking the lowest left block, writing its numbered points in decreasing order and then deleting it from the noncrossing arc diagram.
Recursively, we then remove the lowest left block of what remains and continue writing the one-line notation for $\pi$ from left to right.

\begin{figure}
\def\arraystretch{1.25}
\begin{tabular}{|c|c|c|c|c|c|}
\hline
Step & Start & 1 & 2 & 3 & 4 \\
\hline
Permutation so far &  & \textbf{3} & 3\textbf{86} & 386\textbf{752} & 386752\textbf{41} \\
\hline
Diagram remaining & 
\parbox[c]{3em}{
\includegraphics[width=3em]{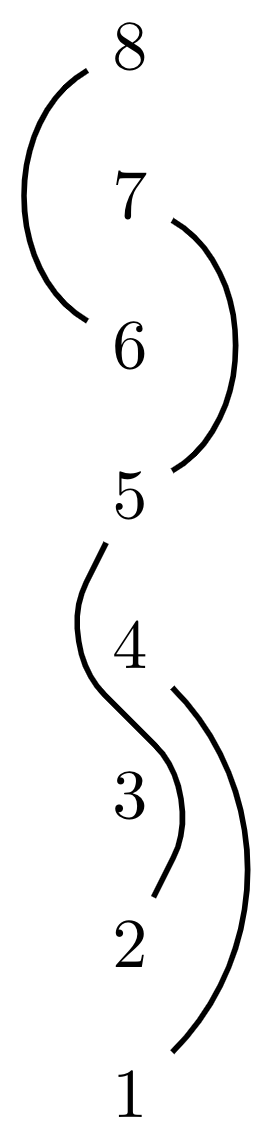}} & 
\parbox[c]{3em}{
\includegraphics[width=3em]{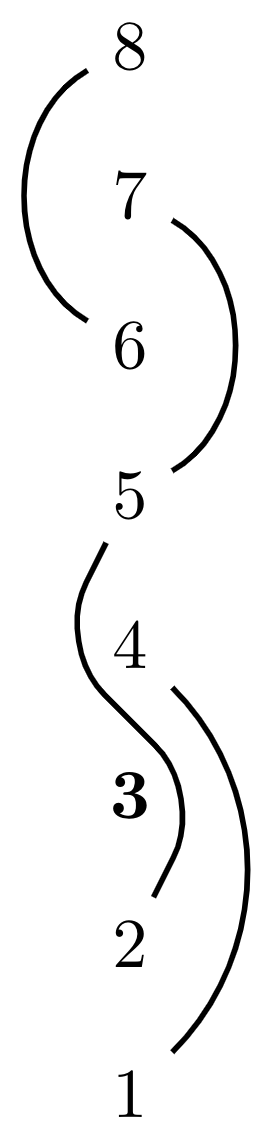}} & 
\parbox[c]{3em}{
\includegraphics[width=3em]{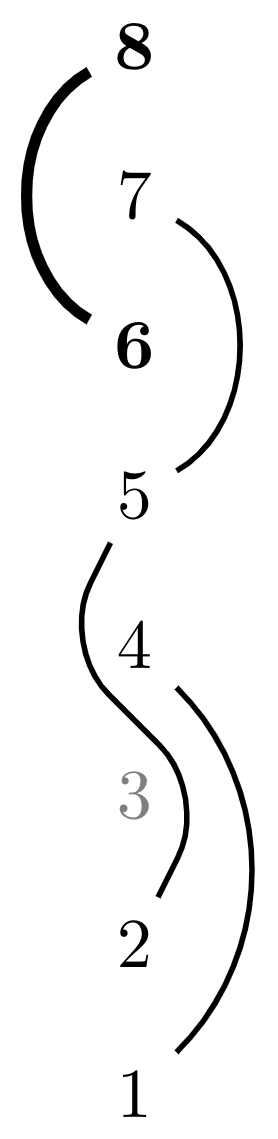}} &
\parbox[c]{3em}{
\includegraphics[width=3em]{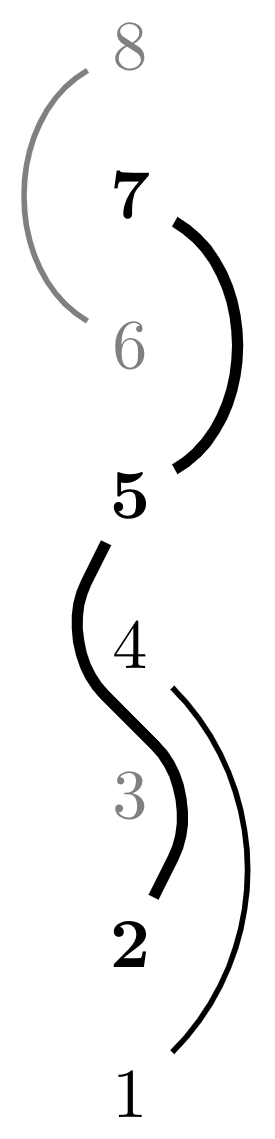}} &
\parbox[c]{3em}{
\includegraphics[width=3em]{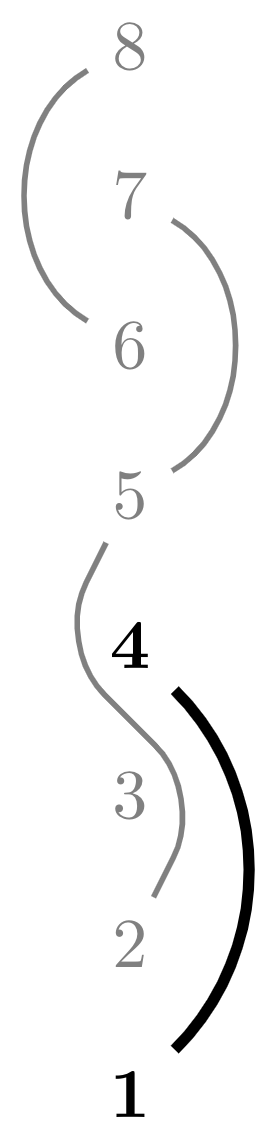}}\\
\hline
\end{tabular}
\caption{The map from type-A noncrossing arc diagrams to permutations}
\label{delta inv fig}
\end{figure} 

Since $\delta$ maps a permutation in $S_n$ with $k$ descents to a noncrossing arc diagram with $k$ arcs, in particular $\delta$ restricts to a bijection from permutations that are join-irreducible in the weak order to single arcs.
Suppose $\alpha$ is an arc connecting point $q$ and $p$ with $q>p$.  
Write $L(\alpha)$ for the set of \newword{left points of $\alpha$} (points in the interval $(p,q)$ that are to the left of $\alpha$) and write $R(\alpha)$ for the set of \newword{right points of $\alpha$} (points in the interval $(p,q)$ that are to the right of~$\alpha$).
Then the join-irreducible element corresponding to $\alpha$ is the permutation that starts with $1\cdots(p-1)$, followed by the elements of $L(\alpha)$ in increasing order, then $q$, then $p$, then the elements of $R(\alpha)$ in increasing order, and finally $(q+1)\cdots n$.

Join-irreducible elements of the weak order are in bijection with shards, and thus shards are in bijection with arcs.
Combining \cite[Proposition 10-5.8]{regions10} with the bijection described above between arcs and join-irreducible permutations, we can write the bijection between arcs and shards as follows.

\begin{proposition}\label{A shard}  
Suppose $\alpha$ is an arc with endpoints $q>p$. %, with left points~$L$, and with right points $R$.
The shard associated to $\alpha$ is 
\[\set{\x\in\reals^n:x_p=x_q\text{ and }x_p\leq x_i\,\forall i\in R(\alpha)\text{ and }x_p\geq x_i\,\forall i\in L(\alpha)}.\]
\end{proposition}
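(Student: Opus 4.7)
The plan is to verify the formula directly from the bijection in \cref{j shard}: the shard $\Sigma_\alpha$ is the unique lower shard of the join-irreducible region corresponding to $j_\alpha$. From the one-line notation of $j_\alpha$ recalled just before the proposition, its only descent occurs between the entries $q$ and $p$, so the cover reflection of $j_\alpha$ is the transposition $(p\,\,q)$ and $\Sigma_\alpha$ lies in the hyperplane $H_{p,q} = \set{\x : x_p = x_q}$.

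Next I would describe the decomposition of $H_{p,q}$ into shards. Any rank-two subarrangement of the braid arrangement containing $H_{p,q}$ has the form $\set{H_{i,j}, H_{j,k}, H_{i,k}}$ for indices $i<j<k$ with $\set{p,q}\subseteq\set{i,j,k}$, and relative to the base region $\set{x_1\le\cdots\le x_n}$ corresponding to the identity, the two basic hyperplanes of such a triple are the adjacent pair $H_{i,j}$ and $H_{j,k}$. Hence $H_{p,q}$ is cut precisely by $H_{p,k}$ and $H_{k,q}$ for each $k$ with $p<k<q$, and on $H_{p,q}$ both of these restrict to the single codimension-one subspace $\set{x_p=x_k=x_q}$. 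The shards of $H_{p,q}$ are therefore indexed by subsets $L\subseteq\set{p+1,\ldots,q-1}$, where the shard $\Sigma_L$ consists of points of $H_{p,q}$ with $x_k\le x_p$ for $k\in L$ and $x_k\ge x_p$ for $k\in\set{p+1,\ldots,q-1}\setminus L$.

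Finally I would identify which $\Sigma_L$ is $\Sigma_\alpha$. The region of $j_\alpha$ is $\set{\x:x_{j_\alpha(1)}\le x_{j_\alpha(2)}\le\cdots\le x_{j_\alpha(n)}}$, and reading off the one-line notation shows that on this region $x_l\le x_q=x_p$ for every $l\in L(\alpha)$ while $x_r\ge x_p=x_q$ for every $r\in R(\alpha)$. The facet of $j_\alpha$'s region inside $H_{p,q}$ thus lies in $\Sigma_{L(\alpha)}$, which is exactly the set displayed in the proposition. The main potential obstacle is matching conventions between one-line notation and the coordinate ordering of the corresponding region; this is straightforward to pin down on a small example, for instance $n=3$ with $j_\alpha=231$ (the arc from $3$ to $1$ with $L(\alpha)=\set{2}$), where one checks by hand that the lower shard of the region $\set{x_2\le x_3\le x_1}$ in $H_{1,3}$ is indeed $\set{x_1=x_3\ge x_2}$, in agreement with the claimed formula.
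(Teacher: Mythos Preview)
Your argument is correct.  The paper does not actually give a proof of this proposition; it simply states that the formula is obtained by combining \cite[Proposition~10-5.8]{regions10} with the explicit bijection between arcs and join-irreducible permutations described in the preceding paragraph.  Your proposal is therefore a self-contained unpacking of that citation: you identify the hyperplane from the unique cover reflection of $j_\alpha$, determine which hyperplanes cut $H_{p,q}$ directly from the definition of basic hyperplanes in rank-two subarrangements of the braid arrangement, and then read off the correct chamber of $H_{p,q}$ from the one-line notation of $j_\alpha$.  This is exactly what the cited result encodes, so the two approaches are the same in spirit; yours simply carries out the computation rather than invoking it.  The convention check at the end with $j_\alpha=231$ is a sensible safeguard and does indeed come out consistent with the stated formula.
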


Noncrossing arc diagrams record canonical join representations of permutations, as explained in \cite[Section~3]{arcs} and summarized in the following theorem.
\begin{theorem}\label{A CJR}
Given a permutation $\pi\in S_n$, the canonical join representation of~$\pi$ is the set of join-irreducible elements corresponding to the set of arcs in $\delta(\pi)$.
\end{theorem}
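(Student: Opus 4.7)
The strategy is to match the two sides of the desired equality descent by descent via Theorem~\ref{Cox canon}. Both sides are indexed by the descents of $\pi$: the cover reflections of $\pi$ are precisely the transpositions $(\pi_i\,\,\pi_{i+1})$ arising from descent positions $i$, and by construction the arcs of $\delta(\pi)$ are in bijection with these descents. So it suffices to fix a descent at position $i$, set $q=\pi_i$, $p=\pi_{i+1}$, $t=(p\,\,q)$, and let $\alpha$ be the corresponding arc; I must show that the join-irreducible element $j$ attached to $\alpha$ (described explicitly just before the theorem) equals the canonical joinand $j_t$ of Theorem~\ref{Cox canon}.

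First I would verify that $j\le\pi$ by checking $\inv(j)\subseteq\inv(\pi)$. Reading off the one-line notation $j=1\cdots(p-1),\,L(\alpha),\,q,\,p,\,R(\alpha),\,(q+1)\cdots n$ (with $L(\alpha)$ and $R(\alpha)$ in increasing order), a short case check identifies $\inv(j)$ as the disjoint union of $\{(p,q)\}$, the pairs $(p,l)$ with $l\in L(\alpha)$, the pairs $(r,q)$ with $r\in R(\alpha)$, and the pairs $(r,l)$ with $l\in L(\alpha)$, $r\in R(\alpha)$, $r<l$. Using that $L(\alpha)$ consists of the ``early middles'' $\pi_{j'}$ with $j'<i$ and $p<\pi_{j'}<q$, while $R(\alpha)$ consists of the ``late middles'' with $j'>i$, each listed pair is immediately seen to be an inversion of $\pi$ (the last type because every early middle precedes every late middle in $\pi$).

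Next I would show that $j$ is the minimum element of $\{v:v\le\pi,\ t\in\inv(v)\}$, which by Theorem~\ref{Cox canon} is exactly $j_t$. Let $v$ lie in this set. The key input is the triangle property of permutation inversion sets: for $a<b<c$, if $(a,c)\in\inv(v)$ then $(a,b)\in\inv(v)$ or $(b,c)\in\inv(v)$. Applied to the triple $(p,r,q)$ with $r\in R(\alpha)$, and using that $(p,r)\notin\inv(\pi)\supseteq\inv(v)$ because $p$ precedes $r$ in $\pi$, this forces $(r,q)\in\inv(v)$. Applied to $(p,l,q)$ with $l\in L(\alpha)$, and using $(l,q)\notin\inv(\pi)$, it forces $(p,l)\in\inv(v)$. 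Applied to $(r,l,q)$ with $l\in L(\alpha)$, $r\in R(\alpha)$, $r<l$, using $(r,q)\in\inv(v)$ from the first step and $(l,q)\notin\inv(\pi)$, it forces $(r,l)\in\inv(v)$. This exhausts $\inv(j)$, so $\inv(j)\subseteq\inv(v)$ and $j\le v$; hence $j=j_t$.

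The only real obstacle is the bookkeeping in the first step: enumerating $\inv(j)$ from the one-line notation and correctly matching ``left of $\alpha$'' and ``right of $\alpha$'' with ``early middle'' and ``late middle'' via the formula defining $\delta$. Once this dictionary is in place, the triangle-property argument applies uniformly to the three triples above, and assembling the conclusions across all descents yields the canonical join representation of $\pi$ as the set of join-irreducible elements corresponding to the arcs of $\delta(\pi)$.
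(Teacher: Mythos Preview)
Your argument is correct and follows essentially the same route as the paper: the paper does not prove \cref{A CJR} directly (it cites \cite{arcs}), but its proof of the analogous \cref{B CJR} proceeds exactly by verifying, for each cover reflection $t$, that the join-irreducible $j$ attached to the corresponding arc satisfies $j\le\pi$ and is minimal among elements having $t$ as an inversion, then invoking \cref{Cox canon}. Your version is more explicit---you write out $\inv(j)$ and use the biconvexity (``triangle'') property of inversion sets where the paper says ``by inspection''---but the strategy is the same.
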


Recall that two join-irreducible elements in a semidistributive lattice are called compatible if and only if they can appear together in a canonical join representation.
As a consequence of \cref{A CJR}, two join-irreducible elements of $S_n$ are compatible if and only if the corresponding arcs are compatible.

\begin{definition}\label{A subarc}
An arc $\alpha'$ with endpoints $q'>p'$ is a \newword{subarc} of an arc $\alpha$ with endpoints $q>p$ if and only if $q\ge q'>p'\ge p$ and $R(\alpha')=R(\alpha)\cap(p',q')$.
If $\alpha'$ is a subarc of $\alpha$, then we say $\alpha$ is a \newword{superarc} of $\alpha'$.
Thus, given $\alpha'$, a superarc $\alpha$ is obtained by pushing $\alpha'$ right or left of
the top and/or bottom endpoint, independently, and then extending upward and/or downward to make a longer arc.
\end{definition}

\begin{remark}\label{A explicit}
It will be convenient in later generalizations to construct subarcs explicitly.
Define a function $h$ from the plane to $\reals$ that returns the vertical height of a point, calibrated so that each numbered point $i$ has height $i$.
Given an arc $\alpha$, parametrized as a function from the interval $[0,1]$ into the plane, we obtain a subarc $\alpha'$ of $\alpha$ as follows:
Choose $t_1$ and $t_2$ with $0\le t_1<t_2\le1$ such that $h(\alpha(t_1))\in\set{1,\ldots,n}$ and $h(\alpha(t_2))\in\set{1,\ldots,n}$.
Also choose $\ep_1>0$ and $\ep_2>0$ such that $|h(\alpha(t_1+\ep_1))-h(\alpha(t_1))|<1$, $|h(\alpha(t_2-\ep_2))-h(\alpha(t_2))|<1$, and $t_1+\ep_1<t_2-\ep_2$.
Define $\alpha'$ to be the arc obtained by concatenating three curves:
First, the straight line segment from the point numbered $h(\alpha(t_1))$ to the point $\alpha(t_1+\ep_1)$;
second, the restriction of $\alpha$ to the interval $[t_1+\ep_1,t_2-\ep_2]$;
and third, the straight line segment from $\alpha(t_2-\ep_2)$ to the point numbered $h(\alpha(t_2))$.
\end{remark}

The following theorem is \cite[Theorem~4.4]{arcs}.
A new proof of the theorem using \cref{Emily's thm} is found in \cite[Section~2.3]{TharpThesis}. 
%, but we prove it again here 
%because we will reuse the argument in type D and

\begin{theorem}\label{arc forcing A}
Let $j_1$ and $j_2$ be join-irreducible permutations, corresponding to arcs $\alpha_1$ and $\alpha_2$ respectively.
Then $j_1$ forces $j_2$ if and only if $\alpha_1$ is a subarc of $\alpha_2$.
\end{theorem}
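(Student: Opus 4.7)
The plan is to combine \cref{shard digraph} with \cref{Emily's thm}. Under the bijection $\Sigma\mapsto J_\Sigma$, forcing among join-irreducibles corresponds to the reflexive-transitive closure of the shard digraph. Since the subarc relation of \cref{A subarc} is readily verified to be reflexive and transitive, it suffices to establish two things: first, that a single shard arrow $\Sigma_1\to\Sigma_2$ implies $\alpha_1$ is a subarc of $\alpha_2$; and conversely, that any instance of the subarc relation is realized by a chain of shard arrows.

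For the forward direction I would apply \cref{Emily's thm} to extract a shard $\Sigma_1'$ compatible with $\Sigma_1$ such that $H_{\Sigma_2}$ is non-basic in the rank-two subarrangement through $H_{\Sigma_1}$ and $H_{\Sigma_1'}$. In a type-A Coxeter arrangement, every rank-two subarrangement is either of type $A_1\times A_1$ (two hyperplanes, both basic) or of type $A_2$, consisting of the hyperplanes $\set{x_i=x_j}$, $\set{x_j=x_k}$, and $\set{x_i=x_k}$ for some $i<j<k$, with the first two basic. Hence we must be in the $A_2$ case and $H_{\Sigma_2}=\set{x_i=x_k}$; without loss of generality $\alpha_1$ has endpoints $i,j$ and $\alpha_1'$ has endpoints $j,k$, while $\alpha_2$ has endpoints $i,k$. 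Writing out the three shards via \cref{A shard}, the containment $\Sigma_1\cap\Sigma_1'\subseteq\Sigma_2$ translates directly into the equalities $R(\alpha_1)=R(\alpha_2)\cap(i,j)$ and $L(\alpha_1)=L(\alpha_2)\cap(i,j)$, which is exactly the subarc condition of \cref{A subarc}. Transitivity of the subarc relation then propagates the conclusion from single arrows to arbitrary shard-digraph paths.

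For the converse, given $\alpha_1$ a subarc of $\alpha_2$, I would decompose the extension into one-unit elementary steps. Writing the endpoints of $\alpha_1$ as $(p,q)$ and those of $\alpha_2$ as $(P,Q)$ with $P\le p<q\le Q$, I build a chain $\alpha_1=\beta_0,\beta_1,\ldots,\beta_t=\alpha_2$ of subarcs of $\alpha_2$ using \cref{A explicit}, in which consecutive terms differ by a one-unit extension of a single endpoint (the new interior point being assigned to the same side of $\beta_{m+1}$ as it lies on $\alpha_2$). For a one-unit downward step from $\beta_m$ with endpoints $(a,b)$ to $\beta_{m+1}$ with endpoints $(a-1,b)$, I take $\alpha'$ to be the straight arc with endpoints $a-1$ and $a$ (empty $L$ and $R$ sets, whose shard is all of $\set{x_{a-1}=x_a}$). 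This $\alpha'$ is compatible with $\beta_m$ (meeting it only at the shared endpoint $a$), lies with $\beta_m$ in the $A_2$ subarrangement on $\set{a-1,a,b}$ in which $H_{\beta_{m+1}}=\set{x_{a-1}=x_b}$ is the non-basic hyperplane, and forces $x_{a-1}=x_a$ on $\Sigma_{\beta_m}\cap\Sigma_{\alpha'}$. That equality makes every defining inequality of $\Sigma_{\beta_{m+1}}$ follow from those of $\Sigma_{\beta_m}$, including the one at the newly introduced interior point $a$, which satisfies both possible side inequalities because $x_{a-1}=x_a$. \Cref{Emily's thm} then produces the shard arrow $\Sigma_{\beta_m}\to\Sigma_{\beta_{m+1}}$, and concatenation supplies the required shard-digraph path; upper-endpoint extensions are symmetric. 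The main obstacle is the dictionary in the forward step: the containment $\Sigma_1\cap\Sigma_1'\subseteq\Sigma_2$ is only consistent at generic points of the intersection if the $R/L$ assignments of $\alpha_1$ and $\alpha_1'$ agree with those of $\alpha_2$ on their respective intervals, so one must argue that any disagreement would confine $\Sigma_1\cap\Sigma_1'\cap\Sigma_2$ to a strictly lower-dimensional locus and so violate the containment.
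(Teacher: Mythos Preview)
The paper does not give its own proof of this theorem: it cites the result as \cite[Theorem~4.4]{arcs} and remarks that a new proof using \cref{Emily's thm} appears in \cite[Section~2.3]{TharpThesis}. Your proposal is precisely that second approach, and it is correct; the forward direction via the codimension/generic-point argument and the converse via one-step endpoint extensions with the length-one arc $\alpha'$ both go through as you describe.
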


%Before proving the theorem, we state two important corollaries.
We state two important corollaries of \cref{arc forcing A}, both of which are stated as part of \cite[Corollary~4.5]{arcs}.
The first is simply a rephrasing of \cref{arc forcing A}.
The second is obtained by combining \cref{A CJR,main A} with \cref{CJC quotient},

\begin{corollary}\label{A uncontracted}
A set $U$ of arcs corresponds to the set of \emph{un}contracted join-irreducible permutations of some congruence $\Theta$ on $S_n$ if and only if $U$ is closed under passing to subarcs.
\end{corollary}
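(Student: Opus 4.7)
The plan is to derive \cref{A uncontracted} as a direct translation of \cref{arc forcing A} via the general congruence-theoretic bridge recalled in \cref{lat sec,weak sec}. Specifically, since the weak order on $S_n$ is congruence uniform, congruences on $S_n$ are in bijection with their sets of contracted join-irreducibles, and the image of this bijection is precisely the collection of subsets of join-irreducibles that are closed upward under forcing. The forward direction here is immediate from the definition of forcing: if $\Theta$ contracts $j_1$ and $j_1$ forces $j_2$, then $\Theta$ contracts $j_2$. The reverse direction uses congruence uniformity. Taking complements, a subset $U$ of join-irreducible permutations is the set of \emph{un}contracted join-irreducibles of some congruence $\Theta$ if and only if $U$ is closed downward under forcing, meaning that whenever $j_2 \in U$ and $j_1$ forces $j_2$, we also have $j_1 \in U$.

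With this reformulation in hand, \cref{A uncontracted} follows by translating each piece through the bijection between arcs and join-irreducible permutations described just after \cref{main A}. By \cref{arc forcing A}, ``$j_1$ forces $j_2$'' is equivalent to ``$\alpha_1$ is a subarc of $\alpha_2$'', so the condition ``$U$ is closed downward under forcing'' becomes exactly ``whenever $\alpha_2 \in U$ and $\alpha_1$ is a subarc of $\alpha_2$, we have $\alpha_1 \in U$'', which is closure under subarcs.

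The main obstacle is essentially bookkeeping: one must be careful to reconcile the direction of forcing (from the contractor to the element it forces to be contracted) against the direction of the subarc relation (from subarc to superarc), and one must invoke congruence uniformity of the weak order in order to know that every subarc-closed collection of arcs actually arises as the uncontracted set of a congruence, rather than merely being compatible with the forcing order in the easy direction. Beyond these conventions, the corollary contains no content past \cref{arc forcing A}.
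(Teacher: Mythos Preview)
Your argument is correct and is exactly the unpacking that the paper elides: the paper simply declares that \cref{A uncontracted} is ``a rephrasing of \cref{arc forcing A}'', and you have spelled out precisely how, using congruence uniformity to get the reverse direction and carefully tracking the direction of forcing versus the subarc relation. There is nothing to add or correct.
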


\begin{corollary}\label{A cong bij}
If $\Theta$ is a congruence on $S_n$ and $U$ is the set of arcs corresponding to join-irreducible permutations \emph{not} contracted by $\Theta$, then $\delta$ restricts to a bijection from the quotient $S_n/\Theta$ (the set of  permutations not contracted by $\Theta$) to the set of noncrossing arc diagrams consisting only of arcs in $U$.
\end{corollary}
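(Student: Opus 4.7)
The plan is to assemble Theorem~\ref{main A}, Theorem~\ref{A CJR}, and Corollary~\ref{CJC quotient}; essentially every needed ingredient has been established, and the proof amounts to reading off what the bijection $\delta$ does on a subset carved out by the congruence.

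First, Theorem~\ref{main A} gives a bijection $\delta$ between $S_n$ and all noncrossing arc diagrams on $n$ points, and Theorem~\ref{A CJR} identifies the set of arcs of $\delta(\pi)$ with the set of canonical joinands of $\pi$. Together with the discussion preceding \cref{A subarc}, which identifies noncrossing arc diagrams with pairwise compatible sets of arcs (and compatibility of arcs with compatibility of the corresponding join-irreducible permutations), this says that, under $\delta$, each $\pi \in S_n$ corresponds to a face of the canonical join complex of $S_n$, whose vertices are the join-irreducible permutations (equivalently, the single arcs).

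Next, I would invoke Corollary~\ref{CJC quotient}: the canonical join complex of the quotient $S_n/\Theta$ is the subcomplex of the canonical join complex of $S_n$ induced by the set of uncontracted join-irreducibles, which is exactly $U$ under the arc bijection. A face of this induced subcomplex is, by definition, a set of pairwise compatible arcs all of which lie in $U$, which is the same thing as a noncrossing arc diagram whose arcs all lie in $U$.

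Combining these two observations: the bijection between elements of $S_n$ and noncrossing arc diagrams, restricted to uncontracted permutations, lands in (and covers) exactly those diagrams whose arcs lie in $U$. Injectivity is inherited from $\delta$, and surjectivity follows because any noncrossing arc diagram $D$ with arcs in $U$ equals $\delta(\pi)$ for a unique $\pi \in S_n$, which must itself be uncontracted since all its canonical joinands (the arcs of $D$) are. I do not expect a serious obstacle; the only thing to be careful about is matching up the language of the canonical join complex (pairwise compatible sets of arcs) with the language of noncrossing arc diagrams, which the excerpt has already done.
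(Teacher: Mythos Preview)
Your proposal is correct and matches the paper's own approach exactly: the paper states this corollary without a separate proof, merely noting that it is obtained by combining \cref{main A}, \cref{A CJR}, and \cref{CJC quotient}. You have simply spelled out the details of that combination.
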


We conclude this section with a result that will be helpful in \cref{B sym sec,B orb sec}.
A finite Coxeter group has an element $w_0$ that is longer than every other element (in the usual sense of length in terms of number of letters in a reduced word or number of inversions).
The element $w_0$ is an involution.

In $S_n$, this element $w_0$ is $n(n-1)\cdots321$.
Given $\pi=\pi_1\pi_2\cdots \pi_n\in S_n$, $\pi w_0$ is the permutation with one-line notation $\pi_n\pi_{n-1}\cdots \pi_1$ and $w_0\pi$ is the permutation with one-line notation $(n+1-\pi_1)(n+1-\pi_2)\cdots(n+1-\pi_n)$.
The one-line notation of $w_0\pi w_0$ is $(n+1-\pi_n)(n+1-\pi_{n-1})\cdots(n+1-\pi_1)$.

In the following proposition, we assume that the points are placed so that a rotation by a half turn sends the point labeled $i$ to the point labeled $n+1-i$.

\begin{proposition}\label{w0 arc}
If $\pi\in S_n$, then the arc diagrams $\delta(\pi)$ and $\delta(w_0\pi w_0)$ are related by a half turn that sends the point labeled $i$ to the point labeled $n+1-i$.
\end{proposition}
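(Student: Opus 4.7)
The plan is to compare the combinatorial data of $\delta(\pi)$ and $\delta(w_0\pi w_0)$ descent by descent and verify equivariance under the half-turn $\rho \colon i \mapsto n+1-i$.

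Set $\sigma = w_0 \pi w_0$, so that $\sigma_j = n+1 - \pi_{n+1-j}$. First, I would establish a bijection between the descents of $\sigma$ and those of $\pi$: the inequality $\sigma_j > \sigma_{j+1}$ is equivalent to $\pi_{n-j} > \pi_{n+1-j}$, so the descents of $\sigma$ at position $j$ correspond to the descents of $\pi$ at position $i = n-j$. For such a pair, the arc in $\delta(\sigma)$ associated with position $j$ has endpoints $\sigma_{j+1} = n+1-\pi_i$ and $\sigma_j = n+1-\pi_{i+1}$, which are precisely the images under $\rho$ of the endpoints $\pi_i$ and $\pi_{i+1}$ of the arc at position $i$ in $\delta(\pi)$. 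So at least on the level of endpoints, the two diagrams agree after applying $\rho$.

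Next, I would verify that left and right points of each arc correspond correctly. Using the explicit description from the text, an entry $\pi_k$ with $\pi_{i+1} < \pi_k < \pi_i$ lies right of the arc in $\delta(\pi)$ if $k < i$ and left of it if $k > i$. Setting $m = n+1-k$, the condition $\pi_{i+1} < \pi_k < \pi_i$ translates to $\sigma_{j+1} < \sigma_m < \sigma_j$, while $k < i$ becomes $m > j+1$ and $k > i+1$ becomes $m < j$. In $\delta(\sigma)$, entries $\sigma_m$ with $m > j$ lying strictly between $\sigma_{j+1}$ and $\sigma_j$ are left of the corresponding arc, and those with $m < j$ are right of it. Thus the bijection $k \leftrightarrow m = n+1-k$ sends right points of an arc in $\delta(\pi)$ to left points of the corresponding arc in $\delta(\sigma)$, and vice versa, which is exactly the behavior required of a half-turn (since a half-turn reverses the left/right sides of a monotone downward curve).

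Since the points are placed so that $\rho$ acts as $i \mapsto n+1-i$ on the labeled vertical line, and since a noncrossing arc diagram is determined (up to combinatorial equivalence) by the endpoints and left/right data of its arcs, combining these two observations shows that $\delta(w_0\pi w_0)$ is obtained from $\delta(\pi)$ by applying $\rho$. The main (minor) obstacle is simply careful bookkeeping of the four indices $i, j, k, m$; no deeper structural argument is needed beyond the explicit combinatorial description of $\delta$ already recorded in the excerpt.
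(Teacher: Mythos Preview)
Your proof is correct and follows essentially the same approach as the paper's: both verify that descents of $\pi$ and $w_0\pi w_0$ correspond, that the resulting arcs have endpoints related by $v\mapsto n+1-v$, and that left/right points are exchanged---precisely the effect of a half-turn (the paper phrases this last step as the composition of a vertical reflection with a reflection in the vertical line). One harmless slip: under the paper's convention an entry $\pi_k$ with $k<i$ lies \emph{left} of the arc (the arc ``goes right of'' it), not right; you have the sides reversed throughout, but since you reverse them consistently for both $\pi$ and $\sigma$, your conclusion that left and right are swapped under the correspondence is unaffected.
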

\begin{proof}
There is a descent $\pi_i>\pi_{i+1}$ in $\pi$ if and only if there is a descent $(n+1-\pi_{i+1})>(n+1-\pi_i)$ in $w_0\pi w_0$, and an entry $a$ with $\pi_{i+1}<a<\pi_{i+1}$ occurs left of $\pi_i$ in $\pi$ if and only if $(n+1-a)$ is right of $(n+1-\pi_i)$ in $w_0\pi w_0$. 
Thus $\delta(w_0\pi w_0)$ is obtained from $\delta(\pi)$ by a \emph{reflection} taking each point labeled $i$ to the point labeled $n+1-i$, followed by a reflection in the vertical line containing the points.
The composition of these two reflections is a half turn that sends each point labeled $i$ to the point labeled $n+1-i$. 
\end{proof}

\section{Centrally symmetric noncrossing arc diagrams}\label{B sym sec}
In this section and the next, we establish two notions of noncrossing arc diagrams for Coxeter groups of type B.
While these notions are in some sense equivalent, they look quite different.
We begin, in this section, with a centrally symmetric model, after briefly recalling some background about Coxeter groups of type $B_n$ and establishing notation.

The Coxeter group of type $B_n$ can be realized as the group of signed permutations of $\set{\pm1,\ldots,\pm n}$.  
These are the permutations $\pi$ of $\set{\pm1,\ldots,\pm n}$ with the property that $\pi(-i)=-\pi(i)$ for $i=1,\ldots,n$.
The \newword{long one-line notation} of $\pi\in B_n$ is the sequence $\pi_{-n}\pi_{-n+1}\cdots \pi_{-1}\pi_1\pi_2\cdots \pi_n$.
But $\pi\in B_n$ is completely determined by its \newword{short one-line notation} (or simply \newword{one-line notation}) $\pi_1\pi_2\cdots \pi_n$.

We realize $B_n$ as usual as a reflection group in $\reals^n$, with $s_0$ acting as a reflection orthogonal to the standard basis vector $\e_1$ and with each $s_i$ acting as a reflection orthogonal to $\e_{i+1}-\e_i$ for $i=1,\ldots,n-1$.
For convenience, we write $\e_{-i}$ to mean $-\e_i$ for each $i=1,\ldots,n$.
Given a vector $\x=(x_1,\ldots,x_n)\in\reals^n$, we write $x_{-i}$ to mean $-x_i$. 

The reflections in $B_n$ are the permutations with cycle notation $(i\,\,-i)$ for $i\in\set{\pm1,\ldots,\pm n}$ or $(i\,\,j)(-i\,\,-j)$ for $i,j\in\set{\pm1,\ldots,\pm n}$, and $|j|\neq |i|$.

The weak order on $B_n$ has cover relations given by $\pi\covered \sigma$ if $\sigma$ is obtained in one of two ways from $\pi$. 
One way is that $\pi_{-1}<\pi_1$ and $\sigma$ is obtained by exchanging $\pi_{-1}$ and $\pi_1$.
The cover reflection of $\sigma$ associated to this cover is the involution $(\pi_1\,\,\,\pi_{-1})$.
The other way is that $\pi_i<\pi_{i+1}$ for some $i\in\set{1,\ldots,n-1}$ (and equivalently $\pi_{-i-1}<\pi_{-i}$) and $\sigma$ is obtained from $\pi$ by exchanging $\pi_{i}$ and $\pi_{i+1}$ and exchanging $\pi_{-i-1}$ and $\pi_{-i}$.
The cover reflection of $\sigma$ associated to this cover is $(\pi_i\,\,\,\pi_{i+1})(\pi_{-i}\,\,\,\pi_{-i-1})$.

A join-irreducible element of $B_n$ is a signed permutation whose long one-line notation either has exactly one descent $\pi_{-1}>\pi_1$ or has exactly two descents that are symmetric to each other: $\pi_i>\pi_{i+1}$ and $\pi_{-i-1}>\pi_{-i}$ for some $i\in\set{1,\ldots,n-1}$.
Equivalently, either $\pi_1<0$ and the (short) one-line notation is increasing or $\pi_1>0$ and there is exactly one $i\in\set{1,\ldots,n-1}$ such that $\pi_i>\pi_{i+1}$.

%\subsection{A centrally symmetric model}\label{B sym sec}
In the centrally symmetric model, we begin with $2n$ distinct points on a vertical line, and the points identified with the numbers $-n,\ldots, -2,-1,1,2,\ldots, n$, in order, with $-n$ at the bottom.  
The points are placed so that the antipodal map $x\mapsto-x$ (a half turn about the origin) takes $i$ to $-i$ for all $i\in\set{\pm1,\ldots,\pm n}$. 
A \newword{centrally symmetric noncrossing arc diagram} is a collection of arcs on these points, satisfying the same requirements as in type A, with the additional requirement that the entire diagram is symmetric with respect to the half-turn symmetry.  
All centrally symmetric noncrossing arc diagrams for $B_2$ are shown in \cref{fig:b2symncads}.
\begin{figure}
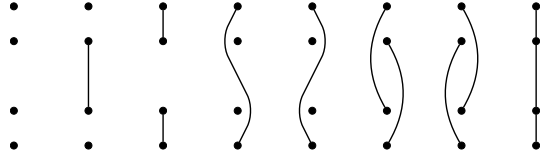

    \centering\includestandalone[height=6em]{figs/b/b2.sym.allncads}
    \caption{Centrally symmetric noncrossing arc diagrams for $B_2$}
    \label{fig:b2symncads}
\end{figure}

A centrally symmetric noncrossing arc diagram consists of \newword{symmetric arcs} (arcs that are fixed by the central symmetry) and \newword{symmetric pairs of arcs} (pairs of arcs that are compatible in the type-A sense of \cref{A sec} and are mapped to each other by the central symmetry).
Symmetric pairs of arcs come in two types:
A \newword{non-overlapping symmetric pair}
is a symmetric pair $\set{\alpha,-\alpha}$ such that $\alpha$ connects positive points and $-\alpha$ connects negative points, so that neither arc is left or right of the other.
An \newword{overlapping symmetric pair} is a symmetric pair $\set{\alpha,-\alpha}$ in which each arc connects a positive point to a negative point.
In an overlapping pair, since the two arcs are compatible, one is to the right of the other.

For the purposes of this section, we can harmlessly recast the results of \cref{A sec} in terms of a group $S_{2n}$ of permutations of $\set{\pm1,\ldots,\pm n}$ and in terms of noncrossing arc diagrams on  points labeled $-n,\ldots,-1,1,\ldots,n$.
As an immediate consequence of \cref{main A,w0 arc}, we have the following theorem.
\begin{theorem}\label{main B}
The map $\delta$ restricts to a bijection from $B_n$ to the set of centrally symmetric noncrossing arc diagrams on $2n$ points.
\end{theorem}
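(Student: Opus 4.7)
The plan is to deduce \cref{main B} from \cref{main A} and \cref{w0 arc} applied to the symmetric group on $2n$ letters, using the standard embedding of $B_n$ as a centralizer inside $S_{2n}$.

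First I would make the identification precise. Order the set $\set{\pm 1,\ldots,\pm n}$ as $-n<-n+1<\cdots<-1<1<\cdots<n$ and regard $S_{2n}$ as the group of permutations of this set. The sign-reversing involution $\iota\colon i\mapsto -i$ is then a permutation of $\set{\pm1,\ldots,\pm n}$, and in fact it is the \emph{longest} element $w_0$ of $S_{2n}$, since with respect to this linear order it reverses the sequence (its long one-line notation is $n,n-1,\ldots,1,-1,-2,\ldots,-n$). The condition $\pi(-i)=-\pi(i)$ defining a signed permutation is exactly $\pi\iota=\iota\pi$, equivalently $w_0\pi w_0=\pi$. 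Thus
\[
B_n=\set{\pi\in S_{2n}:w_0\pi w_0=\pi}
\]
as a subgroup of $S_{2n}$.

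Second, I would translate \cref{w0 arc} into the relabeled setting. The proposition is stated for $n$ points labeled $1,\ldots,n$, and says $\delta(w_0\pi w_0)$ is obtained from $\delta(\pi)$ by a half turn sending $i$ to $n+1-i$. In our setting we have $2n$ points with labels relabeled from $\set{1,\ldots,2n}$ to $\set{-n,\ldots,-1,1,\ldots,n}$ (position $k$ becoming label $k-n-1$ for $k\le n$ and $k-n$ for $k>n$). Under this relabeling, the half turn $k\mapsto 2n+1-k$ becomes exactly the map $i\mapsto -i$, which is precisely the central symmetry used in the definition of centrally symmetric noncrossing arc diagrams. So \cref{w0 arc} says that $\delta(w_0\pi w_0)$ is the image of $\delta(\pi)$ under the central symmetry $i\mapsto-i$.

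Third, combine. By \cref{main A}, $\delta$ is a bijection from $S_{2n}$ to the set of noncrossing arc diagrams on the $2n$ labeled points. For $\pi\in S_{2n}$, injectivity of $\delta$ and the previous paragraph give
\[
\pi\in B_n\iff w_0\pi w_0=\pi\iff \delta(\pi)=\delta(w_0\pi w_0)\iff \delta(\pi)\text{ is centrally symmetric}.
\]
Therefore $\delta$ restricts to a bijection between $B_n$ and the set of centrally symmetric noncrossing arc diagrams on $2n$ points.

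There is essentially no hard step: the only piece requiring care is the bookkeeping that identifies $\iota$ with $w_0$ of $S_{2n}$ under the chosen linear order on $\set{\pm1,\ldots,\pm n}$, and the translation of the half-turn in \cref{w0 arc} into the involution $i\mapsto -i$ after relabeling. Once those are in hand, the theorem is a direct corollary.
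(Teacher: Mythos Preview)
Your proof is correct and follows exactly the approach the paper intends: the paper simply states that \cref{main B} is ``an immediate consequence of \cref{main A,w0 arc}'' without further detail, and you have correctly supplied the bookkeeping (identifying the sign-change involution with $w_0$ of $S_{2n}$ under the relabeling, so that $B_n$ is the fixed-point set of $\pi\mapsto w_0\pi w_0$, and matching the half-turn of \cref{w0 arc} with the central symmetry $i\mapsto -i$).
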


\begin{figure}
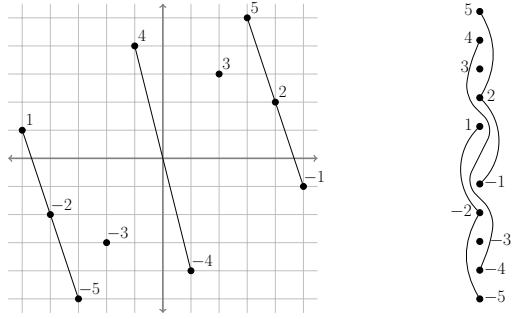

\includestandalone[height=12em]{figs/b/sym.draw}
\qquad\qquad
\includestandalone[height=12em]{figs/b/sym.smoosh}
\caption{The map $\delta$ applied to $\pi=(-4)352(-1)$}
\label{fig:b.map.sym}
\end{figure}

An example of $\delta$ applied to a signed permutation in $B_5$ is shown in \cref{fig:b.map.sym}.

A join-irreducible signed permutation is mapped by $\delta$ to an arc diagram with only one arc (a symmetric arc), or only two arcs (a symmetric pair), and this is a bijection from join-irreducible signed permutations to symmetrics arcs/pairs.  
We describe this bijection explicitly below as a map from symmetric arcs/pairs to join-irreducible signed permutations in (short) one-line notation.
In this description, when we write a set as part of the one-line notation, we mean the elements of that set in increasing order.
Also in this description, $i\cdots j$ will always stand for the sequence of elements increasing by 1 from $i$ to $j$, or the empty sequence if $j=i-1$.

Recall from \cref{A sec} that the set $L(\alpha)$ of \newword{left points} of an arc $\alpha$ is the set of numbers that are left of $\alpha$, and the set $R(\alpha)$ of \newword{right points} of $\alpha$ is the set of numbers that are right of $\alpha$.
(Applying this definition to arc diagrams on points $\set{\pm1,\ldots,\pm n}$, of course $0$ is never a right point or a left point.)
We write $-L(\alpha)$ for $\set{-i:i\in L(\alpha)}$ and similarly $-R(\alpha)$.

A join-irreducible signed permutation $\pi$ with one descent $\pi_{-1}>\pi_1$ corresponds to a symmetric arc $\alpha$.
If $\alpha$ has endpoints $-p<p$, then $\pi$ is 
\[(-p)\,R(\alpha)\,(p+1)\cdots n.\]
A join-irreducible signed permutation $\pi$ that has two descents $\pi_i>\pi_{i+1}$ and ${\pi_{-i-1}>\pi_{-i}}$ corresponds to a symmetric pair $\set{\alpha,-\alpha}$ of arcs.
Suppose $\alpha$ has endpoints $p<q$.
If $\set{\alpha,-\alpha}$ is nonoverlapping, then we assume that $\alpha$ is above $-\alpha$, so that $0<p<q$.
Then $\pi$ is 
\[1\cdots(p-1)\,L(\alpha)\,q\,p\,R(\alpha)\,(q+1)\cdots n.\]
If $\set{\alpha,-\alpha}$ is overlapping, then we assume that $\alpha$ is right of $-\alpha$.
If $p<0<-p<q$, then $\pi$ is
\[[(0,-p)\cap L(\alpha)\cap-L(\alpha)]\,[(-p,q)\cap L(\alpha)]\,q\,p\,R(\alpha)\,(q+1)\cdots n\]
If $p<0<q<-p$, then $\pi$ is
\[[(0,q)\cap L(\alpha)\cap-L(\alpha)]\,q\,p\,R(\alpha)\,[(q,-p)\cap-L(\alpha)]\,(-p+1)\cdots n.\]

Given a signed permutation $\pi\in B_n$, each symmetric arc or symmetric pair of arcs in $\delta(\pi)$ is associated to a cover reflection $t$ of $\pi$.
Each symmetric arc or symmetric pair also specifies a join-irreducible element $j$ as described above.
By inspection of the three cases described above, we see that $j\le \pi$ and that $j$ is minimal with respect to the property that $t\in \inv(j)$.
Thus \cref{Cox canon} implies the following theorem.

\begin{theorem}\label{B CJR}
Given $\pi\in B_n$, the canonical join representation of $\pi$ is the set of join-irreducible elements corresponding to the symmetric arcs and symmetric pairs of arcs in $\delta(\pi)$.
\end{theorem}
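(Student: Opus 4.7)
The plan is to deduce the theorem directly from \cref{Cox canon} by verifying, for each symmetric arc (respectively symmetric pair of arcs) in $\delta(\pi)$, that the associated join-irreducible signed permutation $j$ is exactly the element $j_t$ promised by \cref{Cox canon} for the matching cover reflection $t$ of $\pi$.

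First, I would make explicit the pairing between cover reflections of $\pi$ and symmetric arcs/pairs in $\delta(\pi)$. The descents in the long one-line notation $\pi_{-n}\cdots \pi_{-1}\pi_1\cdots\pi_n$ come in central-symmetric pairs or as a lone central descent at position $(-1,1)$; by construction of $\delta$ applied on $2n$ points (following the recipe recalled from \cref{A sec} and the central symmetry of \cref{main B}), a central descent $\pi_{-1}>\pi_1$ produces a unique symmetric arc with endpoints $\pi_{-1}>\pi_1=-\pi_{-1}$, while a symmetric pair of descents $\pi_i>\pi_{i+1}$, $\pi_{-i-1}>\pi_{-i}$ produces a symmetric pair $\{\alpha,-\alpha\}$ with $\alpha$ connecting $\pi_{i+1}$ and $\pi_i$. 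Under the description of cover reflections given just before the theorem, these descents correspond respectively to the reflections $t=(\pi_1\,\,\pi_{-1})$ and $t=(\pi_i\,\,\pi_{i+1})(\pi_{-i}\,\,\pi_{-i-1})$.

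Next, for each such pairing I would check the two properties required to identify the associated $j$ with $j_t$ of \cref{Cox canon}: namely that (i) $j\le \pi$ in weak order, and (ii) $j$ is the minimum element of $\{v : v\le \pi,\, t\in\inv(v)\}$. For (i), I compare $\inv(j)$ against $\inv(\pi)$ using the explicit one-line notations listed before the theorem statement, and verify inclusion. For (ii), I use the standard fact that the minimum element of $B_n$ having a given reflection $t$ as an inversion is the join-irreducible $j$ whose unique descent(s) in long one-line notation produce precisely $t$ as cover reflection, and whose remaining entries are sorted to create no other inversions. Then I check case-by-case — the symmetric-arc case, the non-overlapping pair case, and the two sub-cases of the overlapping pair (distinguished by whether $-p<q$ or $q<-p$) — that the explicit formulas satisfy this minimality by noting that each entry outside the positions carrying $t$ is placed in increasing order, so no additional inversion is created.

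The routine but main obstacle is the overlapping symmetric pair, where the recipe interleaves $L(\alpha)\cap -L(\alpha)$ with one of $L(\alpha)$ or $-L(\alpha)$ depending on whether $-p<q$ or $q<-p$; here I would check carefully that the listed one-line notation is indeed a signed permutation (each $|k|\in\{1,\ldots,n\}$ appears exactly once, up to sign) and that its only descents are at the position of $t$. Once the four cases are verified, I conclude: by \cref{Cox canon}, the canonical join representation of $\pi$ is $\{j_t : t\in\cov(\pi)\}$, and by the pairing above this is exactly the set of join-irreducibles associated to the symmetric arcs and symmetric pairs of $\delta(\pi)$.
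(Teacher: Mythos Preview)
Your proposal is correct and follows essentially the same approach as the paper: match each symmetric arc/pair to its cover reflection $t$, verify by inspection of the explicit one-line formulas that the associated join-irreducible $j$ satisfies $j\le\pi$ and is minimal with $t\in\inv(j)$, then invoke \cref{Cox canon}. The paper compresses your case analysis into the single phrase ``by inspection of the three cases described above,'' but the logical structure is identical.
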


Recall from \cref{A sec} that two arcs are called compatible if and only if they don't intersect, except possibly at endpoints, and don't share the same top endpoint or the same bottom endpoint.
We now define one symmetric arc/pair $A$ to be \newword{compatible} with another symmetric arc/pair $A'$ if and only if every arc in $A$ is compatible with every arc in $A'$.
As a consequence of \cref{B CJR}, two symmetric arcs/pairs are compatible if and only if the corresponding join-irreducible elements are compatible.

The following two propositions are obtained by combining the correspondence between arcs and join-irreducible signed permutations with simple observations in \cite[Sections~3,5]{congruence}.
In those sections, join-irreducible elements are shown to be in bijection with certain ``signed subsets'', and the signed subsets are used to write inequalities for shards.

\begin{proposition}\label{B shard sym arc}
Suppose $\alpha$ is a symmetric arc having endpoints $-p$ and~$p$.
The shard associated to $\alpha$ is 
\[\set{\x\in\reals^n:x_p=0\text{ and }0\leq x_i\,\forall i\in R(\alpha)}.\]
\end{proposition}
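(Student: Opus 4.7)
The plan is to invoke \cref{j shard}, which identifies $\Sigma$ as the unique lower shard of the join-irreducible region $J=jB$, where $j$ is the join-irreducible signed permutation corresponding to $\alpha$. The explicit formula for $j$ (given above the proposition) is $j=(-p)\,R(\alpha)\,(p+1)\cdots n$ in short one-line notation. Under the isomorphism $w\mapsto wB$ from the weak order on $B_n$ to $\P(\A_{B_n},B)$, the region $J$ is cut out by
\[0\le -x_p\le x_{r_1}\le\cdots\le x_{r_{p-1}}\le x_{p+1}\le\cdots\le x_n,\]
where $r_1<\cdots<r_{p-1}$ are the elements of $R(\alpha)$ in increasing order. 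The unique descent in the long one-line notation of $j$ is $\pi_{-1}=p>\pi_1=-p$, giving cover reflection $(p\,\,-p)$ and hyperplane $H_\Sigma=\set{x_p=0}$. The facet $F$ of $J$ in this hyperplane, obtained by replacing the first inequality above with equality, is already contained in the set claimed for $\Sigma$.

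To enlarge $F$ to the full shard, I would first classify the hyperplanes cutting $H_\Sigma$. Any reflecting hyperplane $H'$ meeting $\set{x_p=0}$ in a codimension-two subspace lies with $H_\Sigma$ in a rank-two subarrangement of type $B_2$ (of the form $\set{\set{x_p=0},\set{x_q=0},\set{x_p=x_q},\set{x_p=-x_q}}$ for some $q$ with $0<q\neq p$) or of type $A_1\times A_1$ (in which neither hyperplane cuts the other). Projecting the base region onto the $(x_p,x_q)$-plane shows that $\set{x_p=0}$ is basic, hence uncut, when $p<q$ and is non-basic, cut by both $\set{x_q=0}$ and $\set{x_p=x_q}$, when $p>q$. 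Inside $H_\Sigma$, each of these cuts induces the same codimension-one slice $\set{x_p=x_q=0}$, so the shards in $H_\Sigma$ are precisely the $2^{p-1}$ regions indexed by sign choices for $x_1,\ldots,x_{p-1}$---matching the $2^{p-1}$ symmetric arcs with endpoints $\pm p$.

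Finally, I would read off the sign pattern of $F$. On $F$, $0\le x_{r_i}$ for each $r_i\in R(\alpha)$, which directly gives $x_q\ge 0$ for each $q\in R(\alpha)\cap\set{1,\ldots,p-1}$; applying $R(\alpha)=-L(\alpha)$ (valid for symmetric arcs) together with $x_{-q}=-x_q$ then gives $x_q\le 0$ for each $q\in L(\alpha)\cap\set{1,\ldots,p-1}$. Both cases combine into the single statement $0\le x_i$ for all $i\in R(\alpha)$, yielding the claimed description of $\Sigma$. The main technical obstacle is the classification of cuts on $\set{x_p=0}$, but this is a routine rank-two computation in the root system of type~$B$.
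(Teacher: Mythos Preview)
Your argument is correct. The paper, however, does not actually prove this proposition: it is stated as a direct consequence of the correspondence between symmetric arcs and join-irreducible signed permutations together with the description of shards via ``signed subsets'' already worked out in \cite[Sections~3,5]{congruence}. In effect the paper outsources precisely the two steps you carry out by hand, namely determining which hyperplanes cut $\set{x_p=0}$ and reading off the resulting sign pattern from the one-line notation of~$j$. Your route is therefore a self-contained reconstruction of that cited material rather than a genuinely different strategy. The payoff of your version is that the reader sees explicitly why the shards in $\set{x_p=0}$ are the $2^{p-1}$ orthants indexed by sign choices for $x_1,\ldots,x_{p-1}$ (matching the $2^{p-1}$ symmetric arcs with endpoints $\pm p$); the payoff of the paper's version is brevity and uniformity, since \cref{B shard sym pair} is obtained from the same citation without a separate computation. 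One small simplification: you do not really need the detour through $R(\alpha)=-L(\alpha)$ at the end, since the facet inequalities already read $0\le x_{r}$ for every $r\in R(\alpha)$ directly, with the convention $x_{-i}=-x_i$ handling negative indices automatically.
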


\begin{proposition}\label{B shard sym pair}  
Suppose $\set{\alpha,-\alpha}$ is a symmetric pair of arcs such that $\alpha$ has endpoints $p$ and $q$.
%Let $L$ be the set of left points of $\alpha$ and let $R$ be the set of right points of $\alpha$. 
The shard associated to $\set{\alpha,-\alpha}$ is 
\[\set{\x\in\reals^n:x_p=x_q\text{ and }x_p\leq x_i\,\forall i\in R(\alpha)\text{ and }x_p\geq x_i\,\forall i\in L(\alpha)}.\]
\end{proposition}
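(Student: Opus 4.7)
The plan is to read off the shard directly from the explicit one-line notation of the join-irreducible $j\in B_n$ corresponding to $\set{\alpha,-\alpha}$ and combine this with the ``signed subset'' description of type-B shards in \cite[Sections~3 and~5]{congruence}.

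First, in each of the three cases for $\set{\alpha,-\alpha}$ recorded just before the statement (non-overlapping; overlapping with $p<0<-p<q$; overlapping with $p<0<q<-p$), the one-line notation of $j$ places the pair $q,p$ in adjacent positions, together with the symmetric pair $-p,-q$. Hence the unique cover reflection of $j$ is $(p\,q)(-p\,-q)$, which fixes exactly the hyperplane $H=\set{\x\in\reals^n:x_p=x_q}$ (equivalently, $x_{-p}=x_{-q}$ under the convention $x_{-i}=-x_i$). So the shard lies in $H$; this gives the equation in the stated formula.

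Second, I would invoke the description of type-B shards from \cite[Sections~3 and~5]{congruence}, where each join-irreducible region is encoded by a signed subset that directly determines the defining inequalities of the associated shard. In our setting, a positively-signed index $i$ lying between $p$ and $q$ contributes an inequality $x_p\le x_i$ and a negatively-signed index contributes $x_p\ge x_i$, with $x_{-i}$ interpreted as $-x_i$. The task then reduces to showing that the signed subset produced by $j$ assigns $+$ exactly to the indices in $R(\alpha)$ and $-$ exactly to the indices in $L(\alpha)$. In the non-overlapping case this is immediate from the one-line notation $1\cdots(p-1)\,L(\alpha)\,q\,p\,R(\alpha)\,(q+1)\cdots n$: entries from $L(\alpha)$ sit to the left of the descent pair and entries from $R(\alpha)$ sit to the right, matching the sign convention of \cite{congruence}. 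The two overlapping cases admit the same reading once one uses the conventions $L(-\alpha)=-L(\alpha)$, $R(-\alpha)=-R(\alpha)$, and $x_{-i}=-x_i$ to translate negative-index contributions.

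The main obstacle will be the bookkeeping in the overlapping cases, where the interval of points between $p$ and $q$ straddles $0$ and both $\alpha$ and $-\alpha$ contribute entries to the one-line notation of $j$. One must verify carefully that the inequalities originating from $-L(\alpha)$, after being re-expressed via $x_{-i}=-x_i$, combine with those from $L(\alpha)$ with no sign error, and similarly for $R$, so that the single uniform formula in the statement is correct across all three cases and agrees term-by-term with the shard description produced by the signed subset of $j$.
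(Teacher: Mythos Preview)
Your approach is correct and matches the paper's: the paper does not give a standalone proof but simply states that the proposition is ``obtained by combining the correspondence between arcs and join-irreducible signed permutations with simple observations in \cite[Sections~3,5]{congruence},'' where join-irreducibles are encoded by signed subsets that yield the shard inequalities. Your proposal carries out exactly this combination in more detail than the paper does; the only extra point the paper notes (Remark immediately following the proposition) is a global sign-flip relative to the conventions in \cite{congruence}, which is precisely the bookkeeping issue you already flag.
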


\begin{remark}\label{my bad}
There is a global change in the direction of inequalities between \cref{B shard sym arc,B shard sym pair} and \cite{congruence}, because in \cite{congruence}, hyperplanes were identified with their normal vectors pointing \emph{away} from a ``base region'' $B$.
A standard convention for root systems has the positive roots pointing \emph{towards} a ``fundamental chamber''.
This convention was used in \cite{regions10}, from which we quoted \cref{A shard}, so to keep conventions consistent in this paper, we apply the antipodal map relative to \cite{congruence}.  
\end{remark}

We now use \cref{Emily's thm} to characterize forcing of join-irreducible signed permutations in terms of a centrally symmetric version of subarcs, which we now define.
Since join-irreducible signed permutations correspond either to centrally symmetric arcs or to centrally symmetric pairs of arcs, we will use the terminology of a ``subarc pair'' of a symmetric arc or symmetric pair, or a ``subarc'' of a symmetric arc or symmetric pair.
\cref{fig:subarcs sym} shows subarcs and subarc pairs of a symmetric arc and an overlapping symmetric pair. 

\begin{figure}
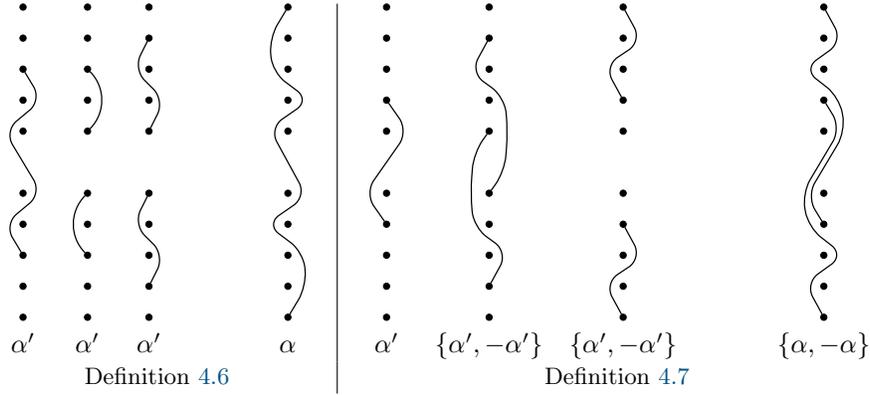

\begin{tabular}{c|c}
\begin{tabular}{cccp{18pt}c}
\includestandalone[height=12em]{figs/b/arrow.S.sym.sym.sub}
&\includestandalone[height=12em]{figs/b/subarc.S.sym.non}
&\includestandalone[height=12em]{figs/b/subarc.S.sym.non1}
&&\includestandalone[height=12em]{figs/b/arrow.S.sym.big}
\\
$\alpha'$&$\alpha'$&$\alpha'$&&$\alpha$
\end{tabular}
&
\begin{tabular}{cccp{18pt}c}
\includestandalone[height=12em]{figs/b/arrow.S.over.sym.sym}
&\includestandalone[height=12em]{figs/b/subarc.S.over.mid}
&\includestandalone[height=12em]{figs/b/arrow.S.over.sym.non}
&&\includestandalone[height=12em]{figs/b/arrow.S.over.sym.big}
\\
$\alpha'$&$\{\alpha',-\alpha'\}$&$\{\alpha',-\alpha'\}$&&$\{\alpha,-\alpha\}$
\end{tabular}
\\[50pt]
\small\cref{subs of sym}
&
\small\cref{subs of pair}
\end{tabular}
\caption{Subarcs and subarc pairs}
\label{fig:subarcs sym}
\end{figure}

\begin{definition}[Subarcs/subarc pairs of a symmetric arc]\label{subs of sym}
Suppose $\alpha$ is a symmetric arc with endpoints ${-p<p}$.
A \newword{subarc} of $\alpha$ is a symmetric arc $\alpha'$ with endpoints $-p'<p'$ with $p'\le p$ and $R(\alpha')=R(\alpha)\cap(-p',p')$.
A \newword{subarc pair} of $\alpha$ is a non-overlapping pair $\set{\alpha',-\alpha'}$ of arcs such that $\alpha'$ has endpoints $p'$ and $q'$ with $0<p'<q'\le p$ and $R(\alpha')=R(\alpha)\cap(p',q')$.
\end{definition}

\begin{definition}[Subarcs/subarc pairs of a symmetric pair of arcs]\label{subs of pair}
Suppose $\set{\alpha,-\alpha}$ is a symmetric pair of arcs and $\alpha$ has endpoints $p<q$.
A \newword{subarc} of $\set{\alpha,-\alpha}$ is a symmetric arc $\alpha'$ with endpoints $-p'$ and $p'$ having $p\le-p'<p'\le q$ and $R(\alpha')=R(\alpha)\cap(-p',p')=R(-\alpha)\cap(-p',p')$.
%$R(\alpha')=R(\alpha)\cap(-p',p')=R(-\alpha')\cap(-p',p')$.
This can only happen if $\alpha$ and $-\alpha$ are overlapping there are no numbered points between them on the interval $(-p',p')$.
Now, if $\set{\alpha,-\alpha}$ is overlapping, suppose further that $\alpha$ is right of $-\alpha$.
A \newword{subarc pair} of $\set{\alpha,-\alpha}$ is a symmetric pair $\set{\alpha',-\alpha'}$ with endpoints $p'$ and $q'$ with $p\le p'<q'\le q$ and $R(\alpha')=R(\alpha)\cap(p',q')$, satisfying an additional requirement: If $\set{\alpha',-\alpha'}$ is overlapping then $\alpha'$ is also right of $-\alpha'$.
The left two pictures of \cref{fig:subarc fail} show a failure of this additional requirement. 
\end{definition}

\begin{figure}
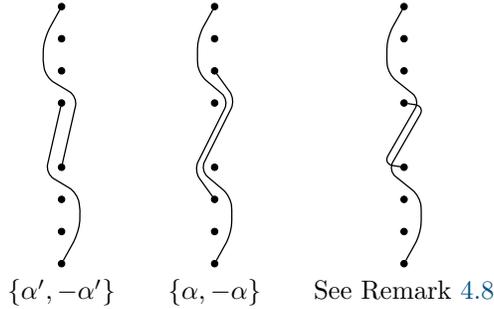

    \centering
    \begin{tabular}{ccccc}
\includestandalone[height=10em]{figs/b/subarc.S.counterex.small} 
&&\includestandalone[height=10em]{figs/b/subarc.S.counterex.big} 
&&\includestandalone[height=10em]{figs/b/subarc.S.counterex.concat} \\
 $\{\alpha',-\alpha'\}$ 
&&$\{\alpha,-\alpha\}$ 
&& See \cref{sym explicit}
    \end{tabular}
    \caption{A pair not satisfying the additional requirement of \cref{subs of pair}}
    \label{fig:subarc fail}
\end{figure}

\begin{remark}\label{sym explicit}
To understand this additional requirement, it is useful to construct subarcs of $\set{\alpha,-\alpha}$ explicitly.
Similarly to the type-A construction, we take $h$ to be a function from the plane to $\reals$ that returns the vertical height of a point, calibrated so that each numbered point $i\in\set{\pm1,\ldots,\pm n}$ has height $i$.
Choose $t_1$ and $t_2$ with ${0\le t_1<t_2\le1}$ such that $h(\alpha(t_1)),h(\alpha(t_2))\in\set{\pm1,\ldots,\pm n}$ and choose $\ep_1>0$ and $\ep_2>0$ with $|h(\alpha(t_1+\ep_1))-h(\alpha(t_1))|<1$, $|h(\alpha(t_2-\ep_2))-h(\alpha(t_2))|<1$, and $t_1+\ep_1<t_2-\ep_2$.
Then $\alpha'$ is obtained by concatenating three curves: the segment from the point numbered $h(\alpha(t_1))$ to the point $\alpha(t_1+\ep_1)$, the restriction of $\alpha$ to $[t_1+\ep_1,t_2-\ep_2]$ and the segment from $\alpha(t_2-\ep_2)$ to the point numbered $h(\alpha(t_2))$.
The additional requirement on $\set{\alpha',-\alpha'}$ is that (up to choosing another symmetric arc pair combinatorially equivalent to $\set{\alpha,-\alpha}$) the specific curves $\alpha'$ and $-\alpha'$ constructed here do not intersect each other.
The right picture of \cref{fig:subarc fail} shows an example of $\alpha'$ and $-\alpha'$ crossing. 
\end{remark}

We will prove the following theorem.

\begin{theorem}\label{arc forcing B}
Let $j_1$ and $j_2$ be join-irreducible signed permutations.
Then $j_1$ forces $j_2$ if and only if the arc or pair of arcs corresponding to $j_1$ is a subarc or subarc pair of the arc or pair of arcs corresponding to $j_2$.
\end{theorem}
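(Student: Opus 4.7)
The plan is to combine \cref{shard digraph} with \cref{Emily's thm} to reduce the theorem to a purely combinatorial analysis of shards in the Coxeter arrangement of type $B_n$. By \cref{shard digraph}, $j_1$ forces $j_2$ if and only if there is a directed path from the shard $\Sigma_1$ (associated to $j_1$) to the shard $\Sigma_2$ (associated to $j_2$) in the shard digraph. Since the subarc/subarc pair relation defined in \cref{subs of sym,subs of pair} is evidently reflexive and transitive, it suffices to establish two directions: (i) for each shard arrow $\Sigma_1\to\Sigma_2$, the symmetric arc or pair associated to $\Sigma_1$ is a subarc or subarc pair of the one associated to $\Sigma_2$; and (ii) every subarc or subarc pair relation can be factored into a sequence of such one-step shard arrows.

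The translation step uses the explicit descriptions of shards given by \cref{B shard sym arc,B shard sym pair}: a shard is cut out by one linear equation (defining the hyperplane $H_\Sigma$) together with inequalities $x_p\le x_i$ for right points and $x_p\ge x_i$ for left points. In particular, the rank-two subarrangement containing $H_{\Sigma_1}$ and $H_{\Sigma_1'}$ from \cref{Emily's thm} has an immediate description in terms of the endpoints of the corresponding arcs or pairs, and the requirement that $H_{\Sigma_2}$ lie in this subarrangement but not be basic restricts $\alpha_2$ to be one of a small number of ``intermediate'' arcs joining the extreme endpoints of $\alpha_1$ and $\alpha_1'$. The third condition $\Sigma_1\cap\Sigma_1'\subseteq\Sigma_2$ then fixes the sign/side data of $\alpha_2$ relative to the numbered points separating it from $\alpha_1$ and $\alpha_1'$, yielding precisely a one-step enlargement of $\alpha_1$.

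For direction (i), I would analyze shard arrows case by case on the types of $\Sigma_1$, $\Sigma_1'$, and $\Sigma_2$ (symmetric arc, non-overlapping pair, or overlapping pair), reading off the resulting arc $\alpha_2$ from the shard descriptions and checking that it is a one-step superarc or super-pair of $\alpha_1$ in the sense of \cref{subs of sym,subs of pair}. For direction (ii), I would factor any subarc/subarc pair relation into a finite sequence of steps, each moving one endpoint of $\alpha_1$ past a single numbered point (and, when $\alpha_1$ is a non-overlapping pair contained in a symmetric arc $\alpha_2$, a step that ``fuses'' the pair into a symmetric arc by crossing the axis). For each such step, the witness shard $\Sigma_1'$ is built from the natural auxiliary arc sharing one endpoint with $\alpha_1$ and the opposite endpoint with $\alpha_2$, but routed on the \emph{opposite} side of the single numbered point being crossed; compatibility of this $\alpha_1'$ with $\alpha_1$ is immediate, and the three conditions of \cref{Emily's thm} follow from inspecting the defining linear (in)equalities against the shard descriptions.

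The main obstacle I expect is the bookkeeping in the overlapping symmetric pair case: when $\set{\alpha_1,-\alpha_1}$ is overlapping, the auxiliary arc or pair producing $\Sigma_1'$ may itself be either a symmetric arc or a symmetric pair, depending on whether the enlargement crosses the axis or merely pushes past a numbered point on one side, and one must verify the extra non-crossing requirement of \cref{subs of pair} (the subtlety illustrated in \cref{fig:subarc fail}). Here the explicit concatenation construction of \cref{sym explicit} is essential: it lets one confirm that, up to combinatorial equivalence, the intended $\alpha_1'$ and $-\alpha_1'$ do not cross each other, so that the shard arrow actually produces a valid subarc pair rather than the kind of pair ruled out by the additional requirement of \cref{subs of pair}.
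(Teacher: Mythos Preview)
Your proposal is essentially the paper's approach: reduce to the shard digraph via \cref{shard digraph}, analyze single arrows via \cref{Emily's thm}, and combine with transitivity of the subarc relation. The paper packages your direction~(i) into a standalone classification (\cref{Emily's B arcs}) of all shard arrows in terms of five explicit combinatorial types, then in direction~(ii) shows that every subarc/subarc-pair relation factors as a chain of at most three such arrows.

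Two small points to tighten. First, transitivity of the subarc/subarc-pair relation (\cref{B sub trans}) is not quite ``evident'': there are many cases (non-overlapping pair inside a symmetric arc inside an overlapping pair, etc.), each easy but requiring checking. Second, your description of the witness $\alpha_1'$ as ``routed on the opposite side of the single numbered point being crossed'' is off: in the relevant rank-two picture, $\alpha_1'$ is the complementary piece joining the shared endpoint of $\alpha_1$ to the far endpoint of $\alpha_2$, and $\alpha_2$ is obtained by concatenating $\alpha_1\cup\alpha_1'$ and pushing past the shared endpoint; for a one-point extension, $\alpha_1'$ is a minimal arc with no side choice at all. This matters most in the overlapping case, where you must also verify that the resulting $\{\alpha_1',-\alpha_1'\}$ is compatible with $\{\alpha_1,-\alpha_1\}$, which is exactly what encodes the ``additional requirement'' of \cref{subs of pair}.
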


\cref{arc forcing B} lets us prove the type-B analogues of \cref{A uncontracted,A cong bij}.
The following result is a rephrasing of \cref{arc forcing B}.

\begin{corollary}\label{B uncontracted}
A set $U$ of symmetric arcs/symmetric pairs corresponds to the set of \emph{un}contracted join-irreducible signed permutations of some congruence $\Theta$ on $B_n$ if and only if $U$ is closed under passing to subarcs/subarc pairs.
\end{corollary}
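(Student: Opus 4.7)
The plan is to deduce Corollary \ref{B uncontracted} as an essentially immediate rephrasing of Theorem \ref{arc forcing B}, in exact parallel with the proof of Corollary \ref{A uncontracted}. The key input beyond Theorem \ref{arc forcing B} is the standard fact that for a finite congruence uniform lattice (such as the weak order on $B_n$, as noted in Section \ref{weak sec}), the map sending a congruence $\Theta$ to its set of contracted join-irreducibles is a bijection onto the collection of subsets of $\mathrm{Irr}(L)$ that are closed upward under the forcing preorder. Equivalently, the map $\Theta \mapsto \{j : j \text{ is not contracted by } \Theta\}$ is a bijection onto the collection of subsets closed downward under forcing, i.e., closed under the operation ``if $j_2 \in U$ and $j_1$ forces $j_2$, then $j_1 \in U$.''

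First I would handle the ``only if'' direction. Suppose $U$ is the set of uncontracted join-irreducibles of a congruence $\Theta$ on $B_n$. Let $A_2$ be an arc or symmetric pair corresponding to some $j_2 \in U$, and let $A_1$ be a subarc or subarc pair of $A_2$ corresponding to a join-irreducible $j_1$. By Theorem \ref{arc forcing B}, $j_1$ forces $j_2$. By the definition of forcing, any congruence contracting $j_1$ must also contract $j_2$. Since $\Theta$ does not contract $j_2$, it cannot contract $j_1$, so $j_1 \in U$. Thus $U$ is closed under passing to subarcs/subarc pairs.

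For the ``if'' direction, suppose $U$ is a set of symmetric arcs/symmetric pairs closed under passing to subarcs/subarc pairs, and let $\widetilde U$ be the corresponding set of join-irreducible signed permutations. Again by Theorem \ref{arc forcing B}, closure of $U$ under subarcs/subarc pairs translates directly into the statement: if $j_2 \in \widetilde U$ and $j_1$ forces $j_2$, then $j_1 \in \widetilde U$. Hence the complement of $\widetilde U$ in the set of join-irreducibles is closed upward under forcing. Invoking congruence uniformity of the weak order on $B_n$, this complement is exactly the set of join-irreducibles contracted by some (unique) congruence $\Theta$, whose set of uncontracted join-irreducibles is $\widetilde U$, and so $U$ is the corresponding set of symmetric arcs/pairs.

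The main obstacle is conceptually minor but should be flagged: Theorem \ref{arc forcing B} gives only one direction of the correspondence needed — that forcing implies the subarc/subarc-pair relation and vice versa — and to conclude that \emph{every} subarc-closed set arises from a congruence, one needs to cite that the weak order on $B_n$ is congruence uniform (so that forcing-closed sets of join-irreducibles biject with congruences). This is standard and already invoked in the excerpt, so no new work is required; the corollary is genuinely just a rephrasing.
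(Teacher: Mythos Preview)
Your proposal is correct and matches the paper's approach exactly: the paper simply declares the corollary to be ``a rephrasing of \cref{arc forcing B}'' without giving any further argument, and your write-up spells out precisely the standard unpacking (forcing order $\leftrightarrow$ subarc relation via \cref{arc forcing B}, plus congruence uniformity of the weak order on $B_n$ so that sets of uncontracted join-irreducibles are exactly the forcing-downward-closed sets). There is nothing to add or correct.
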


Combining \cref{B uncontracted} with \cref{B CJR,main B} and \cref{CJC quotient}, we obtain the following result.

\begin{corollary}\label{B cong bij}
If $\Theta$ is a congruence on $B_n$ and $U$ is the set of symmetric arcs/pairs corresponding to join-irreducible signed permutations \emph{not} contracted by $\Theta$, then~$\delta$ restricts to a bijection from the quotient $B_n/\Theta$ (the set of  signed permutations not contracted by $\Theta$) to the set of centrally symmetric noncrossing arc diagrams consisting only of arcs in $U$.
\end{corollary}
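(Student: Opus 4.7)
The plan is to assemble \cref{B cong bij} as a direct bookkeeping argument from four previously established ingredients: \cref{main B}, \cref{B CJR}, \cref{CJR quotient} (or equivalently \cref{CJC quotient}), and the closure property from \cref{B uncontracted}. Since the proof is pure assembly rather than new analysis, the main task is to lay out the equivalences cleanly in both directions.

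First, I would invoke \cref{main B} to remark that $\delta$ is already a bijection from $B_n$ onto all centrally symmetric noncrossing arc diagrams on $2n$ points. Consequently, to prove the corollary it suffices to show that for $\pi \in B_n$, $\pi$ is not contracted by $\Theta$ if and only if every symmetric arc and symmetric pair appearing in $\delta(\pi)$ belongs to $U$.

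Next, I would show the forward implication. Suppose $\pi$ is uncontracted. By \cref{B CJR}, the symmetric arcs and symmetric pairs in $\delta(\pi)$ correspond exactly to the canonical joinands of $\pi$. By \cref{CJR quotient} (in the form that an element is contracted if and only if one of its canonical joinands is contracted), each canonical joinand of $\pi$ is itself uncontracted by $\Theta$. Translating back through \cref{B CJR}, every arc or symmetric pair in $\delta(\pi)$ lies in $U$.

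For the reverse implication, let $D$ be a centrally symmetric noncrossing arc diagram all of whose arcs and pairs lie in $U$. By \cref{main B}, there is a unique $\pi \in B_n$ with $\delta(\pi) = D$. The arcs and pairs of $D$ are precisely the canonical joinands of $\pi$ (again by \cref{B CJR}), so every canonical joinand of $\pi$ is uncontracted by $\Theta$. Applying the contrapositive of \cref{CJR quotient}, $\pi$ is uncontracted. This gives the desired bijective restriction. The role of \cref{B uncontracted} here is only to confirm that the set $U$ in question is the natural one, and in particular closed under subarcs and subarc pairs, so that the statement of the corollary is consistent with the arc-theoretic description of quotients.

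The main obstacle is not conceptual but notational: one must carefully track the two separate cases (symmetric arc vs.\ symmetric pair) throughout, and keep straight the correspondence between canonical joinands and diagram components guaranteed by \cref{B CJR}. Once that correspondence is fixed, each direction reduces to a one-line application of \cref{CJR quotient}.
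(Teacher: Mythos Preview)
Your proposal is correct and matches the paper's approach: the paper simply states that the corollary follows by combining \cref{B uncontracted}, \cref{B CJR}, \cref{main B}, and \cref{CJC quotient}, without spelling out the details you provide. Your observation that \cref{B uncontracted} is not strictly needed for the bijection itself (only \cref{main B}, \cref{B CJR}, and \cref{CJR quotient}/\cref{CJC quotient} are essential) is accurate, and the analogous type-A statement in the paper indeed omits the corresponding closure result from its list of ingredients.
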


It is convenient to understand the opposite of the subarc relation.
We say that a symmetric arc/pair $A_2$ is a \newword{superarc} or \newword{superarc pair} of a symmetric arc/pair $A_1$ if and only if $A_1$ is a subarc/subarc pair of $A_2$.
We now describe how to construct a superarc/superarc pair of a given symmetric arc/pair.
The construction is a direct rephrasing of \cref{subs of sym,subs of pair}.

\medskip
\noindent
\textbf{Superarcs/superarc pairs of a symmetric arc $\alpha$.}
We construct a superarc by pushing $\alpha$ right or left of its top endpoint, symmetrically pushing it left or right of its bottom endpoint, and extending $\alpha$ symmetrically upwards and downwards.
We construct a superarc pair by first replacing $\alpha$ with two arcs that are combinatorially equivalent to $\alpha$, antipodal images of each other, and disjoint except at their endpoints.
We then push one or both copies right or left of the top endpoint, independently but without making the curves cross each other, make the symmetric change at the bottom endpoint, and extend the curves upward and downward symmetrically.

\medskip
\noindent
\textbf{Superarcs/superarc pairs of a non-overlapping symmetric pair of arcs $\set{\alpha,-\alpha}$.}
To construct a superarc or superarc pair, we first push $\alpha$ and $-\alpha$ left or right of their inner endpoints and/or outer endpoints in a way that preserves the symmetry.
We construct a superarc by extending the curves inward and connecting them, and also possibly extending both outwards. 
We construct a superarc pair by extending the curves symmetrically to create a symmetric pair of compatible arcs, either overlapping or not.

\medskip
\noindent
\textbf{Superarc pairs of an overlapping symmetric pair of arcs $\set{\alpha,-\alpha}$.}
(There are no superarcs of $\set{\alpha,-\alpha}$ and no non-overlapping superarc pairs, only overlapping superarc pairs.)
We construct a superarc pair by pushing $\alpha$ right or left of one or both endpoints independently, making the symmetric change to $-\alpha$, and then extending the curves symmetrically to create a symmetric pair of compatible overlapping arcs.
The additional requirement in \cref{subs of pair} is implied by this description of superarcs:
If $\set{\alpha_1,-\alpha_1}$ fails to be a subarc pair of $\set{\alpha_2,-\alpha_2}$ because it fails the additional requirement, then the attempt to extend $\alpha_1$ and $-\alpha_1$ to obtain $\alpha_2$ and $-\alpha_2$ will result in a pair of arcs that cross each other.
(See \cref{fig:subarc fail}.)

A first step towards the proof of \cref{arc forcing B} is the following observation.

\begin{proposition}\label{B sub trans}
The subarc/subarc pair relation on symmetric arcs/pairs is transitive.
\end{proposition}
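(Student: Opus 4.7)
The plan is to reduce the claim to the transitivity of the type-A subarc relation of \cref{A subarc}, which is immediate from the definition: if the endpoints of $\alpha''$ nest inside those of $\alpha'$ and the endpoints of $\alpha'$ nest inside those of $\alpha$, then the endpoints of $\alpha''$ nest inside those of $\alpha$, and the right-point condition chains via $R(\alpha'')=R(\alpha')\cap(p'',q'')=R(\alpha)\cap(p',q')\cap(p'',q'')=R(\alpha)\cap(p'',q'')$, using $(p'',q'')\subseteq(p',q')$.

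The first substantive step is to observe that each clause of \cref{subs of sym,subs of pair} amounts to a type-A subarc assertion on the constituent arcs. A symmetric arc $\alpha'$ is a subarc of a symmetric arc $\alpha$ precisely when $\alpha'$ is a type-A subarc of $\alpha$; a non-overlapping pair $\{\alpha',-\alpha'\}$ is a subarc pair of a symmetric arc $\alpha$ precisely when $\alpha'$ is a type-A subarc of $\alpha$; a symmetric arc $\alpha'$ is a subarc of an overlapping pair $\{\alpha,-\alpha\}$ precisely when $\alpha'$ is a type-A subarc of both $\alpha$ and $-\alpha$; and a subarc pair $\{\alpha',-\alpha'\}$ of any symmetric pair $\{\alpha,-\alpha\}$ records that $\alpha'$ is a type-A subarc of $\alpha$. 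In each instance the corresponding relation for $-\alpha'$ and $-\alpha$ follows automatically by the central symmetry.

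Given $A_1\subseteq A_2\subseteq A_3$, I would then chain the type-A subarc relations furnished by the previous paragraph and translate back via the same dictionary to read off $A_1\subseteq A_3$. A useful piece of bookkeeping is that the type of $A_1$ restricts what $A_2$ and $A_3$ can be: an overlapping $A_1$ forces $A_2$ and $A_3$ both to be overlapping, since overlapping pairs appear as subarcs or subarc pairs only of overlapping pairs; and a symmetric-arc $A_1$ forces $A_2$ (and hence the ambient arc of $A_3$ used at each step) to be a symmetric arc or an overlapping pair. In each allowed combination the endpoint-nesting and right-point conditions needed for $A_1\subseteq A_3$ fall out of type-A transitivity applied to the relevant arc of $A_3$ (and, when $A_3$ is an overlapping pair with $A_1$ a symmetric arc, also to $-\alpha_3$).

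The main obstacle is the additional requirement from \cref{subs of pair}: when $\{\alpha_1,-\alpha_1\}$ is overlapping, we must verify that $\alpha_1$ is right of $-\alpha_1$ when exhibiting $A_1$ as a subarc pair of $A_3$. But this is a condition on $A_1$ alone, and in the overlapping case the hypothesis $A_1\subseteq A_2$ already supplies it; it therefore transfers unchanged to $A_1\subseteq A_3$, and the remaining work is a routine case check.
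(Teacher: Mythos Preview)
Your argument is correct and aligns with the paper's own approach, which simply asserts that the proposition ``is verified by checking the various cases'' and that ``each individual case is easy.''  Your reduction to type-A subarc transitivity is precisely the mechanism that makes each case easy, so you have supplied the organizing principle the paper leaves implicit.

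One small wording issue: saying the additional requirement of \cref{subs of pair} ``is a condition on $A_1$ alone'' is not quite accurate, since which arc of $A_1$ is named $\alpha_1$ depends on the super-pair (it is the type-A subarc of the \emph{right} arc).  What actually makes your argument work is that the labeling is consistent along the chain: from $A_2\subseteq A_3$ the right arc $\alpha_2$ is a type-A subarc of the right arc $\alpha_3$, and from $A_1\subseteq A_2$ the arc $\alpha_1$ (type-A subarc of $\alpha_2$) is itself on the right; hence $\alpha_1$ is a type-A subarc of $\alpha_3$ and is on the right, giving the additional requirement for $A_1\subseteq A_3$.  Your conclusion is correct; only the justification should be rephrased.
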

The proposition is verified by checking the various cases (e.g.\ a nonoverlapping subarc pair of a symmetric subarc of an overlapping arc pair).
There are many cases to check, but each individual case is easy.  

As a next step towards proving \cref{arc forcing B}, we use \cref{Emily's thm} to characterize arrows in the shard digraph in terms of arcs and arc pairs.
Given symmetric arcs/pairs $A_1$ and $A_2$, we write $A_1\to A_2$ if and only if the corresponding shards $\Sigma_1$ and $\Sigma_2$ have $\Sigma_1\to\Sigma_2$ in the shard digraph.

\begin{proposition}\label{Emily's B arcs}
Suppose $A_1$ is a symmetric arc or pair and $A_2$ is another symmetric arc or pair, then $A_1\to A_2$ if and only if $A_1$ is a subarc/subarc pair of $A_2$ and one of the following conditions holds. 
\begin{enumerate}[\qquad\rm(i)]
\item \label{pair pair}
$A_2$ is a symmetric pair $\set{\alpha_2,-\alpha_2}$ such that $\alpha_2$ has endpoints $p$ and $q$ with $p<q$ and $A_1$ is a symmetric pair $\set{\alpha_1,-\alpha_1}$ such that $\alpha_1$ has endpoints $p'$ and $q$ with $p'\neq-p$ or $\alpha_1$ has endpoints $p$ and $q'$ with $q'\neq-q$. 
\item \label{sym sym}
$A_1$ and $A_2$ are both symmetric arcs with no endpoints in common.
\item \label{sym non}
$A_2$ is a symmetric arc $\alpha_2$ with endpoints $p$ and $-p$ and $A_1$ is a non-overlapping symmetric pair $\set{\alpha_1,-\alpha_1}$ such that $\alpha_1$ has an endpoint $p$.
\item \label{over sym}
$A_2$ is a symmetric pair $\set{\alpha_2,-\alpha_2}$ such that $\alpha_2$ has endpoints $p$ and $q$ with ${-q<p<0<-p<q}$ and such that each point in $\set{\pm1,\ldots,\pm(p-1)}$ is left of $\alpha_2$ if and only if it is left of $-\alpha_2$, and $A_1$ is a symmetric arc with endpoints $p$ and $-p$.
\item \label{over non} 
$A_2$ is a symmetric pair $\set{\alpha_2,-\alpha_2}$ such that $\alpha_2$ has endpoints $p$ and $q$ with ${-q<p<0<-p<q}$ and such that each point in $\set{\pm1,\ldots,\pm(p-1)}$ is left of $\alpha_2$ if and only if it is left of $-\alpha_2$, and $A_1$ is a non-overlapping symmetric pair $\set{\alpha_1,-\alpha_1}$ such that $\alpha_1$ has endpoints $-p$ and $q$.
\end{enumerate}
\end{proposition}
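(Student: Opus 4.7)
The strategy is to apply \cref{Emily's thm} and translate its three conditions into statements about centrally symmetric arcs and pairs by using the explicit shard descriptions in \cref{B shard sym arc,B shard sym pair}. A key organizing observation is that condition~(ii) of \cref{Emily's thm} requires $H_{\Sigma_2}$ to be non-basic, so the ambient rank-two subarrangement of $\A_{B_n}$ must contain at least three hyperplanes. Thus only rank-two subarrangements of type $A_2$ (three hyperplanes of the form $x_i=x_j$) and type $B_2$ (four hyperplanes of the forms $x_i=0$, $x_j=0$, $x_i=\pm x_j$) are relevant; the $A_1\times A_1$ rank-twos contribute nothing.

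For the reverse (``if'') implication, in each of the five listed cases I would exhibit an explicit witness $A_1'$ filling out a rank-two subarrangement with $H_{\Sigma_1}$ and $H_{\Sigma_2}$, and check the three conditions of \cref{Emily's thm} directly from the defining inequalities in \cref{B shard sym arc,B shard sym pair}. For example, in case~\eqref{sym sym}, with $A_1,A_2$ distinct symmetric arcs on hyperplanes $x_p=0$ and $x_{p'}=0$ (with $|p'|>|p|$ forced by the subarc relation), I would take $A_1'$ to be the symmetric pair whose hyperplane is $x_{p'}-x_p=0$; this fills out the unique rank-two $B_2$ containing $H_{\Sigma_1}$ and $H_{\Sigma_2}$, the basic hyperplanes of that $B_2$ are $H_{\Sigma_1}$ and $H_{\Sigma_1'}$, compatibility of $A_1$ and $A_1'$ is immediate from the shard inequalities, and the inclusion $\Sigma_1\cap\Sigma_1'\subseteq\Sigma_2$ reduces to the equality $R(\alpha_1)=R(\alpha_2)\cap(-p,p)$ coming from subarchood. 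Case~\eqref{pair pair} is handled by completing a type-$A_2$ rank-two at the shared endpoint; cases~\eqref{sym non},~\eqref{over sym},~\eqref{over non} are each handled by completing an appropriate $B_2$ rank-two.

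For the forward (``only if'') implication, starting from a witnessing shard $\Sigma_1'$ with associated symmetric arc or pair $A_1'$, I would classify the possibilities by (a) the type (symmetric arc / non-overlapping pair / overlapping pair) of each of $A_1,A_2,A_1'$ and (b) the type ($A_2$ or $B_2$) of the rank-two containing all three hyperplanes. In each resulting configuration the basicness-of-$H_{\Sigma_1},H_{\Sigma_1'}$ and non-basicness-of-$H_{\Sigma_2}$ constraints pin down the endpoint relationships of $A_1,A_1',A_2$, matching exactly one of the five cases. The containment $\Sigma_1\cap\Sigma_1'\subseteq\Sigma_2$ from \cref{Emily's thm}(iii) then forces $R(\alpha_1)$ to equal $R(\alpha_2)$ on the appropriate subinterval, giving the subarc/subarc pair relation of \cref{subs of sym,subs of pair}.

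The main obstacle is the bookkeeping of the case analysis, particularly the subtle sign and ordering conditions in cases~\eqref{over sym} and~\eqref{over non}, where the endpoints of the overlapping pair $\set{\alpha_2,-\alpha_2}$ straddle the origin. In those cases the additional requirement in \cref{subs of pair} (that in an overlapping subarc pair the orientation of $\alpha'$ relative to $-\alpha'$ matches that of $\alpha$ relative to $-\alpha$) is exactly what is needed for the candidate witness $A_1'$ to be realizable by non-crossing curves, in the sense of the explicit construction in \cref{sym explicit}; without this requirement the purported witness would force the arcs of $A_1$ to cross, violating compatibility.
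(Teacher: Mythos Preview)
Your overall strategy matches the paper's: both apply \cref{Emily's thm}, organize by the type of the rank-two subarrangement (three hyperplanes for condition~\eqref{pair pair}, four hyperplanes for conditions~\eqref{sym sym}--\eqref{over non}), and verify the shard containment $\Sigma_1\cap\Sigma_1'\subseteq\Sigma_2$ via the explicit inequalities of \cref{B shard sym arc,B shard sym pair}. The paper does not separate the two implications as you propose; it establishes the equivalence within each case, but this is a purely organizational difference.

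However, you have misplaced the role of the additional requirement from \cref{subs of pair}. You locate it in cases~\eqref{over sym} and~\eqref{over non}, but in those cases $A_1$ is a symmetric arc or a \emph{non}-overlapping pair, so the additional requirement (which only constrains \emph{overlapping} subarc pairs) is vacuous there. The additional requirement is in fact the crux of case~\eqref{pair pair}: when both $A_1$ and $A_2$ are overlapping pairs sitting in a three-hyperplane rank-two, the paper shows that the compatibility of $A_1$ with the witness $A_1'$ (condition~(i) of \cref{Emily's thm}) holds precisely when the additional requirement is satisfied---if it fails, the concatenation $\alpha_1\cup\alpha_1'$ crosses its antipodal image, as in \cref{fig:subarc fail}, so no compatible witness exists. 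The genuine subtlety in cases~\eqref{over sym} and~\eqref{over non} is different: there the containment $\Sigma_1\cap\Sigma_1'\subseteq\Sigma_2$ forces the condition that no numbered point lies between $\alpha_2$ and $-\alpha_2$ on the inner interval, because the symmetric arc $A_1$ has $R(\alpha_1)$ containing no pair $\pm i$. Your sketch does not flag this, and without it the ``only if'' direction in those cases is incomplete.
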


Examples of arrows $A_1\to A_2$ are shown in \cref{arrows sym}.
We emphasize that the conditions for an arrow in \cref{Emily's B arcs} include the condition that $A_1$ is a subarc/subarc pair of $A_2$.
This requirement includes the additional requirement in \cref{subs of pair}, which is crucial for the arrows satisfying condition \eqref{pair pair} in \cref{Emily's B arcs}, as explained in Case 1 of the proof below.

\begin{figure}
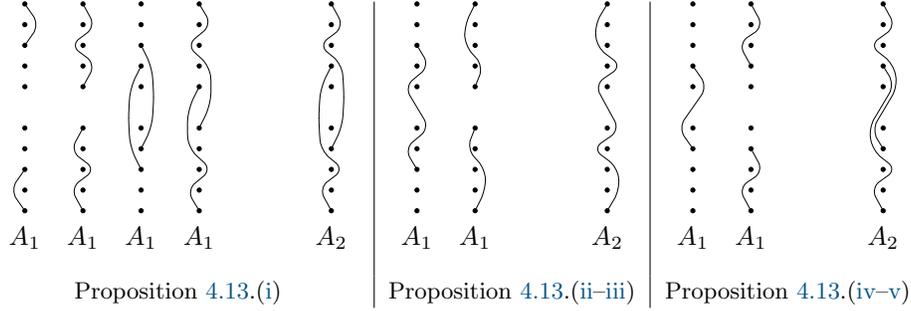

\begin{tabular}{c|c|c}
\begin{tabular}{ccccp{18pt}c}
\includestandalone[height=8em]{figs/b/arrow.S.pair.pair.ord1}
&
\includestandalone[height=8em]{figs/b/arrow.S.pair.pair.ord2}
&
\includestandalone[height=8em]{figs/b/arrow.S.pair.pair.overlap1}
&
\includestandalone[height=8em]{figs/b/arrow.S.pair.pair.overlap2}
&&
\includestandalone[height=8em]{figs/b/arrow.S.pair.pair.big}
\\
$A_1$&$A_1$&$A_1$&$A_1$&&$A_2$
\end{tabular}
&
\begin{tabular}{ccp{18pt}c}
\includestandalone[height=8em]{figs/b/arrow.S.sym.sym.sub}
&\includestandalone[height=8em]{figs/b/arrow.S.sym.non.non1}
&&\includestandalone[height=8em]{figs/b/arrow.S.sym.big}
\\
$A_1$&$A_1$&&$A_2$
\end{tabular}
&
\begin{tabular}{ccp{18pt}c}
\includestandalone[height=8em]{figs/b/arrow.S.over.sym.sym}
&
\includestandalone[height=8em]{figs/b/arrow.S.over.sym.non}
&&
\includestandalone[height=8em]{figs/b/arrow.S.over.sym.big}
\\
$A_1$&$A_1$&&$A_2$ 
\end{tabular}
\\[50pt]
\small\cref{Emily's B arcs}.\eqref{pair pair}
&
\small\cref{Emily's B arcs}.(\ref{sym sym}--\ref{sym non})
&
\small\cref{Emily's B arcs}.(\ref{over sym}--\ref{over non})
\end{tabular}
\caption{Arrows $A_1\to A_2$ among symmetric arcs/pairs}\label{arrows sym}
\end{figure}

\begin{proof}
Let $\Sigma_1$ and $\Sigma_2$ be the shards corresponding to $A_1$ and $A_2$.
\cref{Emily's thm} says that $A_1\to A_2$ if and only if there exists a symmetric arc/pair $A'_1$ (corresponding to a shard $\Sigma_1'$) satisfying certain properties.
In particular, $A_1$ and $A'_1$ must be compatible and there needs to exist a non-basic hyperplane in the rank-two subarrangement containing $H_{\Sigma_1}$ and $H_{\Sigma'_1}$.
In other words, that subarrangement must have more than two hyperplanes.
Each rank-two subarrangement looks like a rank-two parabolic subgroup, so in type B it can have $2$, $3$, or $4$ hyperplanes.

A symmetric arc with endpoints $\pm p$ determines a shard whose hyperplane is orthogonal to $\e_p$.
A symmetric pair such that one arc has endpoints $p$ and $q$ determines a shard whose hyperplane is orthogonal to $\e_q-\e_p$ (recalling the convention that $\e_{-i}$ means $-\e_i$).
If $A_1$ and~$A_1'$ have no endpoints in common, then $H_{\Sigma_1}$ and $H_{\Sigma'_1}$ are orthogonal, and thus, since these hyperplanes are basic in the rank-two subarrangement they determine, that subarrangement has only two hyperplanes. 
Thus we need only consider cases where $A_1$ and~$A_1'$ have endpoints in common.
We break into two cases, where $A_1$ and $A_1'$ determine a rank-two subarrangement with $3$ or $4$ hyperplanes.

\medskip
\noindent
\textbf{Case 1.} 
$H_{\Sigma_1}$ and $H_{\Sigma_1'}$ are the basic hyperplanes in a rank-two subarrangement with~$3$ hyperplanes.
Then $A_1$ is a symmetric pair $\set{\alpha_1,-\alpha_1}$ such that $\alpha_1$ has endpoints $p<q$ and $A_1'$ is a symmetric pair $\set{\alpha_1',-\alpha_1'}$ such that $\alpha_1'$ has endpoints $q<r$ with $q\not\in\set{-p,-r}$.
%The individual arcs $\alpha_1$ and $\alpha_1'$ are thus compatible, so the requirement that $A_1$ and $A_1'$ be compatible is the requirement that $\alpha_1$ and $-\alpha_1'$ are compatible.  
%(Equivalently, $-\alpha_1$ and $\alpha_1'$ are compatible.)
At most one of the pairs $A_1$ and $A_1'$ is overlapping, and if one is, we can assume (up to renaming arcs and points) that $\alpha_1$ or $\alpha'_1$ is to the right of its negative.
The basic hyperplane $H_{\Sigma_1}$ is orthogonal to $\e_q-\e_p$ and the other basic hyperplane $H_{\Sigma'_1}$ is orthogonal to $\e_r-\e_q$.
The unique non-basic hyperplane in the subarrangement is orthogonal to $\e_r-\e_p$.
Thus an arc/arc pair specifies a non-basic hyperplane in the subarrangement if and only if it is a symmetric arc pair  $\set{\alpha_2,-\alpha_2}$ such that $\alpha_2$ has endpoints $p$ and $r$.
%To determine all possible arrows arising from this compatible pair, it remains to determine necessary and sufficient conditions on 
\cref{B shard sym pair} implies that
\begin{multline*}
\Sigma_1\cap\Sigma'_1
=\{\x\in\reals^n:x_p=x_q=x_r,\,\\
x_p\leq x_i\,\forall i\in(R(\alpha_1)\cup R(\alpha_1')),\,\\
x_p\geq x_i\,\forall i\in(L(\alpha_1)\cup L(\alpha_1'))\}.
\end{multline*}
On the other hand,
\[
\Sigma_2=\set{\x\in\reals^n:x_p=x_r,\,x_p\leq x_i\,\forall i\in R(\alpha_2),\,x_p\geq x_i\,\forall i\in L(\alpha_2)}.
\]
Since $\Sigma_1\cap\Sigma'_1$ is in the subspace where $x_p=x_q$, it is contained in $\Sigma_2$ if and only if it is contained in $\set{\x\in\Sigma_2:x_p=x_q}$, which equals
\[
%\set{\x\in\Sigma_2:x_p=x_q}=
\set{\x\in\reals^n:x_p=x_q=x_r,\,x_p\leq x_i\,\forall i\in R(\alpha_2)\setminus\set{q},\,x_p\geq x_i\,\forall i\in L(\alpha_2)\setminus\set{q}}.
\]
Thus $\Sigma_1\cap\Sigma'_1\subseteq \Sigma_2$ if and only if $R(\alpha_2)\setminus\set{q}=R(\alpha_1)\cup R(\alpha'_1)$ (equivalently, if and only if ${L(\alpha_2)\setminus\set{q}=L(\alpha_1)\cup L(\alpha'_1)}$).
This is in turn equivalent to the condition that $\alpha_2$ is obtained by pushing the curve $\alpha_1\cup\alpha'_1$ left or right of the shared endpoint~$q$.

Now \cref{Emily's thm} implies that every arrow among shards that determine rank-two subarrangements with $3$ hyperplanes are of the form $A_1\to A_2$ or $A_1'\to A_2$ for $A_1$, $A_1'$, and $A_2$ as above such that $\alpha_2$ is obtained by pushing the curve $\alpha_1\cup\alpha'_1$ left or right of~$q$.
Given $A_1$ and $A_2$, the existence of $A'_1$ such that $\alpha_2$ is obtained by pushing $\alpha_1\cup\alpha'_1$ left or right of~$q$ is exactly the condition that $A_1$ is a subarc pair of $A_2$, \emph{except for the additional requirement on subarc pairs of symmetric arc pairs} in \cref{subs of pair}.
Similarly, given $A_1'$ and $A_2$, the existence of an appropriate $A_1$ is exactly that $A_1'$ is a subarc pair, without the additional requirement.

Furthermore, \cref{Emily's thm} says that if $\alpha_2$ is obtained by pushing $\alpha_1\cup\alpha'_1$ left or right of~$q$, then these arrows $A_1\to A_2$ or $A_1'\to A_2$ exist if and only if $A_1$ and $A_1'$ are compatible.
We will show that, under the condition that $\alpha_2$ is obtained by pushing $\alpha_1\cup\alpha'_1$ left or right of~$q$, the compatibility of $A_1$ and $A_1'$ is equivalent to the additional requirement.  

Compatibility of $A_1$ and $A_1'$ means that all four arcs $\alpha_1$, $-\alpha_1$, $\alpha'_1$, and $-\alpha'_1$ can be in the same noncrossing arc diagram.
Recall also that if one of the pairs $\set{\alpha_1,-\alpha_1}$ or $\set{\alpha'_1,-\alpha'_1}$ is overlapping, then $\alpha_1$ or $\alpha_1'$ is on the right.  
For each pair which is nonoverlapping, $\alpha_1$ or $\alpha_1'$ is above its opposite.  

Suppose $A_1$ and $A_1'$ are compatible. 
Then the union $\alpha_1\cup\alpha'_1$ is either above or right of its opposite $(-\alpha_1)\cup(-\alpha'_1)$.
Since $\alpha_2$ is obtained by pushing $\alpha_1\cup\alpha'_1$ left or right of~$q$, then either $\set{\alpha_2,-\alpha_2}$ is nonoverlapping and $\alpha_2$ is above $-\alpha_2$ or $\set{\alpha_2,-\alpha_2}$ is overlapping and $\alpha_2$ is right of $-\alpha_2$.

Now, suppose $A_1$ and $A_1'$ are not compatible. 
The individual arcs $\alpha_1$ and $\alpha_1'$ are compatible in any case, so the failure of compatibility of $A_1$ and $A_1'$ implies that $\alpha_1$ and $-\alpha_1'$ cross each other. 
Thus the embedding of $\alpha_2$ obtained by pushing $\alpha_1\cup\alpha'_1$ left or right of~$q$ also crosses its antipodal opposite.  
Since by supposition $\set{\alpha_2,-\alpha_2}$ is a symmetric arc pair, there is an embedding of $\alpha_2$ that does not cross its opposite.
This embedding must therefore have $\alpha_2$ left of $-\alpha_2$, as in \cref{fig:subarc fail}.

We have shown that arrows of the form $A_1\to A_2$ and $A_1'\to A_2$ arising from Case 1 are precisely the arrows described in condition \eqref{pair pair}.

\medskip
\noindent
\textbf{Case 2.} 
$H_{\Sigma_1}$ and $H_{\Sigma_1'}$ are the basic hyperplanes in a rank-two subarrangement with~$4$ hyperplanes. 
Then (up to swapping $A_1$ and $A_1'$) $A_1$ is a symmetric arc $\alpha_1$ with endpoints $\pm p$ for $p>0$ and $A_1'$ is a non-overlapping arc pair $\set{\alpha_1',-\alpha_1'}$ such that $\alpha_1'$ has endpoints $p$ and $q$ with $p<q$.
Any such $A_1$ and $A_1'$ are compatible, with no additional requirements needed.

The basic hyperplanes in the subarrangement are orthogonal to $\e_p$ and $\e_q-\e_p$.
The two non-basic hyperplanes are $\e_q$ and $\e_q+\e_p$.
Thus an arc/arc-pair $A_2$ specifies a non-basic hyperplane in the subarrangement if and only if it is a symmetric arc with endpoints $\pm q$ or an overlapping symmetric pair one of whose arcs has endpoints $-p$ and $q$.
To determine all possible arrows arising from this compatible pair, it remains to determine necessary and sufficient conditions on $A_2$ so that the corresponding shard has $\Sigma_1\cap\Sigma'_1\subseteq\Sigma_2$.

\cref{B shard sym arc,B shard sym pair} combine to say that 
\[
\Sigma_1\cap\Sigma'_1
=\set{\x\in\reals^n:x_p=x_q=0,\,0\leq x_i\forall i\in(R(\alpha_1)\cup R(\alpha_1')),\,0\geq x_i\forall i\in L(\alpha_1')}.
\]
Since $x_{-i}=-x_i$, we can rewrite this as 
\begin{equation}\label{Sig1 Sig1p case 2}
\Sigma_1\cap\Sigma'_1
=\set{\x\in\reals^n:x_p=x_q=0,\,0\leq x_i\forall i\in(R(\alpha_1)\cup R(\alpha_1')\cup(-L(\alpha_1')))}.
\end{equation}
Since $\Sigma_1\cap\Sigma'_1$ is in the subspace where $x_p=0$, it is contained in $\Sigma_2$ if and only if it is contained in $\set{\x\in\Sigma_2:x_p=0}$.

\medskip
\noindent
\textit{Case 2a.}
If $A_2$ is a symmetric arc $\alpha_2$ with endpoints $\pm q$, then \cref{B shard sym arc} says that 
\[\set{\x\in\Sigma_2:x_p=0}=\set{\x\in\reals^n:x_p=x_q=0,\,0\leq x_i\forall i\in R(\alpha_2)}.\]

If $A_1$ is a subarc of $A_2$ as in condition~\eqref{sym sym} and $A_1'$ is a subarc pair of $A_2$ as in condition~\eqref{sym non}, then in particular ${R(\alpha_2)\setminus\set{\pm p}}=R(\alpha_1)\cup R(\alpha_1')\cup(-L(\alpha_1'))$, so ${\Sigma_1\cap\Sigma'_1=\set{\x\in\Sigma_2:x_p=0}}$.

Conversely, if $\Sigma_1\cap\Sigma'_1\subseteq\Sigma_2$, then for every $i\in R(\alpha_2)$, the inequality $0\le x_i$ holds in $\Sigma_1\cap\Sigma'_1$.
Thus $R(\alpha_2)\setminus\set{\pm p}=R(\alpha_1)\cup R(\alpha_1')\cup(-L(\alpha_1'))$, so $A_1$ is a subarc of $A_2$ and $A_1'$ is a subarc pair of $A_2$.

We have shown that arrows of the form $A_1\to A_2$ and $A_1'\to A_2$ in Subcase 2a are precisely the arrows described in conditions \eqref{sym sym} and \eqref{sym non}.

\medskip
\noindent
\textit{Case 2b.}
If $A_2$ is an overlapping symmetric pair $\set{\alpha_2,-\alpha_2}$ such that $\alpha_2$ has endpoints $-p$ and $q$, then we consider \cref{B shard sym pair} in two further cases, given by whether $\alpha_2$ is to the right or left of $-\alpha_2$.
If $\alpha_2$ is to the right of $-\alpha_2$, then \cref{B shard sym pair} implies that 
%\[\Sigma_2=\set{\x\in\reals^n:x_{-p}=x_q,\,x_{-p}\leq x_i\forall i\in R(\alpha_2),\,x_{-p}\geq x_i\forall i\in L(\alpha_2)},\text{ so}\] 
\[\set{\x\in\Sigma_2:x_p=0}=\set{\x\in\reals^n:x_p=x_q=0,\,0\leq x_i\forall i\in R(
\alpha_2),\,0\geq x_i\forall i\in L(\alpha_2)}.\]
If $\alpha_2$ is to the left of $-\alpha_2$, then since $-L(\alpha_2)$ is the set of points to the \emph{right} of $-\alpha_2$ and $-R(\alpha_2)$ is the set of points to the \emph{left} of $-\alpha_2$, \cref{B shard sym pair} implies that 
\begin{multline*}
\set{\x\in\Sigma_2:x_p=0}
\\=\set{\x\in\reals^n:x_p=x_q=0,\,0\leq x_i\forall i\in -L(\alpha_2),\,0\geq x_i\forall i\in -R(\alpha_2)}.
\end{multline*}
In either case, 
\begin{multline}\label{Sig2 case 2}
\set{\x\in\Sigma_2:x_p=0}\\=\set{\x\in\reals^n:x_p=x_q=0,\,0\leq x_i\forall i\in (R(\alpha_2)\cup(-L(\alpha_2)))}.
\end{multline}

If $A_1$ is a subarc of $A_2$ as in condition~\eqref{over sym} and $A_1'$ is a subarc pair of $A_2$ as in condition~\eqref{over non}, then no points are between $\alpha_2$ and $-\alpha_2$, so $R(\alpha_2)\cup(-L(\alpha_2))$ contains no pairs $\pm i$.
Furthermore, $R(\alpha_2)\cap(-p,p)=R(\alpha_1)$, and ${R(\alpha_2)\cap(p,q)=R(\alpha_1')}$, and $L(\alpha_2)\cap(p,q)=L(\alpha_1')$. 
Comparing \eqref{Sig2 case 2} with \eqref{Sig1 Sig1p case 2}, we see that $\Sigma_1\cap\Sigma'_1$ equals $\set{\x\in\Sigma_2:x_p=0}$.

Conversely, if $\Sigma_1\cap\Sigma_1'\subseteq\set{\x\in\Sigma_2:x_p=0}$, then by \eqref{Sig1 Sig1p case 2} and \eqref{Sig2 case 2}, we know that ${(R(\alpha_2)\cup(-L(\alpha_2)))\subseteq(R(\alpha_1)\cup R(\alpha_1')\cup(-L(\alpha_1')))}$.
In this case, no point can be between $\alpha_2$ and $-\alpha_2$, because $R(\alpha_1)$ contains no pairs $\pm i$.
Thus $R(\alpha_2)$ and $-L(\alpha_2)$ coincide along the interval $(-p,p)$.
Therefore ${R(\alpha_2)\setminus\set{p}\subseteq R(\alpha_1)\cup R(\alpha_1')}$ and ${L(\alpha_2)\setminus\set{p}\subseteq (-R(\alpha_1))\cup L(\alpha_1')}=L(\alpha_1)\cup L(\alpha_1')$, so $A_1$ is a subarc of $A_2$ and $A_1'$ is a subarc pair of $A_2$.

We have shown that arrows of the form $A_1\to A_2$ and $A_1'\to A_2$ in Subcase 2b are precisely the arrows described in conditions \eqref{over sym} and \eqref{over non}.
\end{proof}

\begin{proof}[Proof of \cref{arc forcing B}]
Both directions of the theorem use \cref{Emily's B arcs}.
Let $A_1$ be the symmetric arc/pair corresponding to $j_1$ and $A_2$ be the symmetric arc/pair corresponding to $j_2$.
We continue to use the notation $A_1\to A_2$ to mean $j_1\to j_2$.

If $j_i$ forces $j_2$, then there is a sequence of arrows from $j_1$ to $j_2$.
Thus \cref{Emily's B arcs,B sub trans} imply that $A_1$ is a subarc/subarc pair of $A_2$.
%implies in particular that there is a sequence $A_1=A'_1,A'_2,\ldots,A'_k=A_2$ of symmetric arcs/pairs such that $A'_i$ is a subarc of $A'_{i+1}$ for all $i=1,\ldots,k-1$.

Conversely, suppose that $A_1$ is a subarc/subarc pair of $A_2$, with $A_1\neq A_2$.
We will show that, in every case, there is a sequence of one, two, or three arrows from $j_1$ to $j_2$.
For the purposes of this proof, we will refer to the kinds of arrows described in \cref{Emily's B arcs} as ``type (i)'', etc.

\medskip
\noindent
\textbf{Case 1.}
$A_2$ is a symmetric arc.

\medskip
\noindent
\textit{Case 1a.}
$A_1$ is also a symmetric arc.
In this case, $A_1\to A_2$ by an arrow of type~\eqref{sym sym}

\medskip
\noindent
\textit{Case 1b.}
$A_1$ is a non-overlapping symmetric pair.
In this case, suppose that the endpoints of $A_2$ are $\pm p$ with $p>0$ and the endpoints of one arc of $A_1$ are $p'$ and $q'$ with $0<p'<q'\le p$.
Then there is an arrow of type~\eqref{sym non} from $A_1$ to a symmetric arc $A'$ with endpoints $\pm q'$ that is a subarc of $A_2$, and (unless $A'=A_2$) an arrow of type~\eqref{sym sym} from $A'$ to $A_2$, as illustrated in the left picture of \cref{fig:arc forcing B}.
\begin{figure}
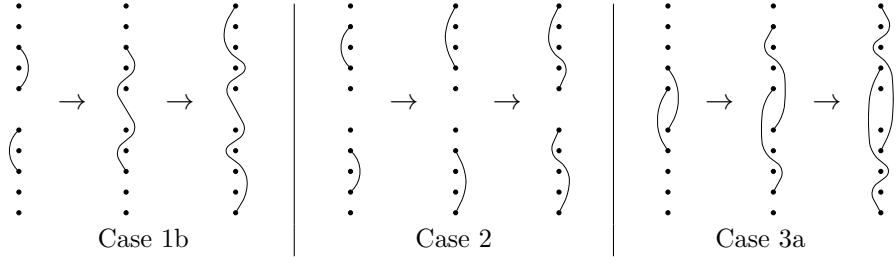

\begin{tabular}{c|c|c}
    \includestandalone[height=8em]{figs/b/subarc.S.sym.non}
    \ \ 
    \raisebox{4em}{\large $\to$}
    \ \ 
    \includestandalone[height=8em]{figs/b/arrow.S.sym.sym.sub}
    \ \ 
    \raisebox{4em}{\large $\to$}
    \ \ 
    \includestandalone[height=8em]{figs/b/arrow.S.sym.big}
    \hspace*{8pt}
&
    \hspace*{8pt}
    \includestandalone[height=8em]{figs/b/subarc.S.non.sub}
    \ \ 
    \raisebox{4em}{\large $\to$}
    \ \ 
    \includestandalone[height=8em]{figs/b/subarc.S.non.mid}
    \ \ 
    \raisebox{4em}{\large $\to$} 
    \ \ 
    \includestandalone[height=8em]{figs/b/subarc.S.non.big}
    \hspace*{8pt}
&
    \hspace*{8pt}
    \includestandalone[height=8em]{figs/b/subarc.S.over.sub}
    \ \ 
    \raisebox{4em}{\large $\to$}
    \ \ 
    \includestandalone[height=8em]{figs/b/subarc.S.over.mid}
    \ \ 
    \raisebox{4em}{\large $\to$}
    \ \ 
    \includestandalone[height=8em]{figs/b/arrow.S.pair.pair.big}
\\
Case 1b&Case 2&Case 3a
\end{tabular}
\caption{Some illustrations of the proof of \cref{arc forcing B}}
\label{fig:arc forcing B}
\end{figure}

\medskip
\noindent
\textbf{Case 2.} 
$A_2$ is a nonoverlapping symmetric pair.
In this case, $A_1$ is also a nonoverlapping symmetric pair.
If the outer endpoints of $A_1$ and $A_2$ are not the same, then there is an arrow of type \eqref{pair pair} from $A_1$ to an arc pair $A'$ whose outer endpoints agree with those of $A_2$; otherwise, let $A'=A_1$. 
If the inner endpoints of $A'$ and $A_2$ agree then $A'=A_2$.
If not, then there is another arrow of type \eqref{pair pair} from $A'$ to $A_2$, as illustrated in the center picture of \cref{fig:arc forcing B}.

\medskip
\noindent
\textbf{Case 3.}
$A_2=\set{\alpha_2,-\alpha_2}$ is an overlapping symmetric pair.  
Write $p$ and $q$ with $p<q$ for the endpoints of $\alpha_2$.

\medskip
\noindent
\textit{Case 3a.}
$A_1=\set{\alpha_1,-\alpha_1}$ is also an overlapping symmetric pair.
In this case, assume that $\alpha_2$ is right of $-\alpha_2$, that $\alpha_1$ is right of~$-\alpha_1$, and that $\alpha_1$ has endpoints $p'$ and $q'$ with $p\le p'<0<q'\le q$.
If $p<p'<0<q'<q$, then we find a sequence of two arrows of type~\eqref{pair pair} from $A_1$ to $A_2$. 
If $q'=q$ and/or $p=p'$, then one or both of these arrows is replaced by equality.
If $-q>p$, then also $-q'>p$, so there is a type~\eqref{pair pair} arrow from $A_1$ to a subarc pair $A'=\set{\alpha',-\alpha'}$ of $A_2$ such that $\alpha'$ has endpoints $p$ and $q'$ and an arrow of type~\eqref{pair pair} from $A'$ to $A_2$, as illustrated in the right picture of \cref{fig:arc forcing B}.
If $p>-q$, then also $p'>-q$, so there is a type~\eqref{pair pair} arrow from $A_1$ to a subarc pair $A'=\set{\alpha',-\alpha'}$ of $A_2$ such that $\alpha'$ has endpoints $p'$ and~$q$ and an arrow of type~\eqref{pair pair} from $A'$ to $A_2$.
(Separating into two cases $-q>p$ and $p>-q$ is necessary.
For example, when $-q>p$, it is possible that $p'=-q$, so that there is no subarc pair $A'=\set{\alpha',-\alpha'}$ of $A_2$ such that $\alpha'$ has endpoints $p'$ and~$q$.
This is the case illustrated in the right picture of \cref{fig:arc forcing B}.)

\medskip
\noindent
\textit{Case 3b.}
$A_1=\set{\alpha_1,-\alpha_1}$ is a non-overlapping symmetric pair.
In this case, assume that $\alpha_1$ is a subarc of $\alpha_2$ (making no assumption about which of $\alpha_2$ or $-\alpha_2$ is to the right).
Write $p'$ and $q'$ for the endpoints of $\alpha_1$, this time with $p<0<p'<q'\leq q$.
If $p'\neq-p$, then there is a subarc pair $A'$ of $A_2$ with endpoints $p'$ and $q$ and an arrow $A'\to A_2$ of type \eqref{pair pair}. 
If $p'=-p$, then there exists a subarc pair $A'=\{\alpha',-\alpha'\}$ of $A_2$ such that $\alpha'$ has endpoints $1$ and $q$ and an arrow from $A'$ to $A_2$. 
The arrow is of type \eqref{pair pair} if $p<-1$ or \eqref{over non} if $p=-1$.
Either way, $A_1$ is a subarc of $A'$ and by Case~2, there is a (possibly empty) sequence of arrows from $A_1$ to $A'$.

\medskip
\noindent
\textit{Case 3c.}
$A_1$ is a symmetric arc $\alpha_1$.
Write $p'$ and $-p'$ with ${p\leq p'<0<-p'\leq q}$ for the endpoints of $\alpha_1$. 
If $p'=p$ or $-p'=q$, then there is an arrow of type \eqref{over sym} from $A_1$ to $A_2$, so it remains to consider the case where $p<p'<0<-p'<q$. 
Without loss of generality (up to swapping $\pm \alpha_2$), we may as well assume that $p>-q$. 
Therefore also $p'>-q$, so there is a type \eqref{over sym} arrow from $A_1$ to a subarc pair $A'=\{\alpha',-\alpha'\}$ of $A_2$ such that $\alpha'$ has endpoints $p'$ and $q$ and an arrow from $A'$ to $A_2$ of type \eqref{pair pair}.
\end{proof}

%\subsection{An orbifold model}\label{B orb sec}
\section{Orbifold noncrossing arc diagrams}\label{B orb sec}
We now take the centrally symmetric model for Coxeter groups of type $B_n$ and pass to a quotient modulo the central symmetry, obtaining what we call an orbifold model.
Most simply---but not very ``drawably''---the orbifold model lives in the quotient space where each point of the plane is identified with its antipodal opposite.
The quotient map takes a point $x$ in the plane to $\set{\pm x}$.  

To make a more ``drawable'' model, we consider the same space as a different quotient of the plane:
We cut the plane in half with a horizontal line through the origin.  
Points strictly above the horizontal line are not identified with any other points.
Each point on or below the horizontal line is identified with all other points on or below the line at the same distance from the origin.
Thus, each ``point'' in the quotient is either a point strictly above the horizontal line (a \newword{point in the upper halfplane}), the origin, or a semicircle below the line with endpoints on the line (or a degenerate semicircle consisting only of the origin, called a \newword{semicircle point}).
We will refer to this quotient as the \newword{orbifold plane}.

We are not interested in the natural quotient map associated to the orbifold plane.
Rather, we are interested in the map $\phi$ defined as follows:
If $x$ is not on the horizontal line, then $\phi(x)$ is the point in $\set{\pm x}$ that is above the horizontal line.
If $x$ is on the horizontal line, then $\phi(x)$ is the semicircle of points of length $|x|$ on or below the horizontal line (or $\phi(0)$ is the degenerate semicircle at the origin).

%Suppose $\alpha$ is a symmetric arc, in the sense of \cref{B sym sec}, again parametrized as a function from $[0,1]$ with $\alpha(t)=-\alpha(1-t)$.
%The function sending $t$ to $\phi(\alpha(t))$ is a curve $\phi(\alpha)$. 
%The image of $\phi(\alpha)$ coincides with the image of the restriction of $\phi(\alpha)$ to $t\in[0,\frac12]$.
%Similarly, suppose $\set{\alpha,-\alpha}$ is a centrally symmetric pair of compatible arcs, in the sense of \cref{B sym sec}.
%Then the curve $\phi(\alpha)$, which sends $t$ to $\phi(\alpha(t))$, coincides with the curve $\phi(-\alpha)$, which sends $t$ to $\phi(-\alpha(t))$.

A symmetric arc $\alpha$ or pair $\set{\alpha,-\alpha}$ in the centrally symmetric model can be uniquely recovered from $\phi(\alpha)$.
Thus also symmetric arc diagrams and the various results and constructions in \cref{B sym sec} can be recovered from their images under $\phi$.
The goal now is to define arcs, arc diagrams, a bijection to $B_n$, etc. in the orbifold plane so that the results of \cref{B sym sec} translate to results in the orbifold plane via the map $\phi$.
With the right definitions, these new results are simply ``translations'' into a new setting, and will not require new proofs.
Collectively, we will refer to these constructions and results as the \newword{orbifold model} for $B_n$.

The orbifold model for $B_n$ starts with $n$ distinct points on a vertical line containing the origin, with each point strictly above the origin.
The origin itself is called the \newword{orbifold point}, and is marked with an ``\orb''.
(We will sometimes also refer, in prose, to the orbifold point as ``\orb''.)
The $n$ points above the origin are identified with the numbers $1,2,\ldots, n$, in order, with $n$ at the top, and are called \newword{numbered points}

A \newword{type-B arc} (or in context simply an \newword{arc}) is a curve in the orbifold plane with each endpoint at a numbered point or at \orb (the origin), satisfying one of the following three descriptions: 
\begin{itemize}
\item 
An \newword{ordinary arc} is an arc on the points $1,\dots,n$ satisfying the same rules as an arc in type A (\cref{A sec}).
\item
An \newword{orbifold arc} is an arc with one endpoint at \orb and the other at a numbered point $p$, moving monotone downwards from $p$ to \orb without touching any other numbered point, passing to the left or right of any numbered points below $p$.
\item 
A \newword{long arc} is an arc $\alpha$ containing exactly one semicircle point, which is not degenerate (i.e. not equal to the orbifold point).
The \newword{left piece} of $\alpha$ moves monotone downward from the \newword{left endpoint} of $\alpha$ and hits the semicircle point left of \orb, passing left or right of numbered points between.
The \newword{right piece} moves monotone downward from the \newword{right endpoint} and hits the semicircle point right of \orb, passing left or right of numbered points between.
The left and right pieces do not intersect and their endpoints do not coincide.
\end{itemize}

An ordinary arc or orbifold arc $\alpha$ is specified combinatorially by its endpoints and the set $R(\alpha)$ of numbered points to its right, or equivalently by its endpoints and the set $L(\alpha)$ of numbered points to its left.
A long arc $\alpha$ is specified combinatorially by its endpoints (right and left), the set $R(\alpha)$ of points right of its right piece \emph{and} the set $L(\alpha)$ of points left of its left piece.
We emphasize that for long arcs (in contrast to the situation for ordinary and orbifold arcs), the sets $R(\alpha)$ and $L(\alpha)$ do not completely determine each other, because points not in $R(\alpha)$ could be in $L(\alpha)$ or between the left and right pieces.
However, $R(\alpha)$ and $L(\alpha)$ must be disjoint, and both sets are necessary to determine $\alpha$ combinatorially. 
We also emphasize that to determine $\alpha$, it is necessary to specify which endpoint is right and which is left.
(For example, for each pair of distinct endpoints, there are two long arcs $\alpha$ with $R(\alpha)=\emptyset$ and $L(\alpha)=\emptyset$, depending on which endpoint is right and which is left.)

A \newword{(type-B) noncrossing arc diagram (on $n$ points)} is a collection of type-B arcs on points \orb and $1,\ldots n$ that don't intersect, except possibly at their endpoints, with no two arcs sharing an endpoint from which they both go down or both go up.
Again, we consider arcs and noncrossing arc diagrams up to combinatorial equivalence.
All type-$B_2$ and type-$B_3$ noncrossing arc diagrams are shown in \cref{fig:b2orbncads,fig:b3allncads}.
For easy comparison, the diagrams in \cref{fig:b2symncads,fig:b2orbncads} appear in the same order.

\begin{figure}
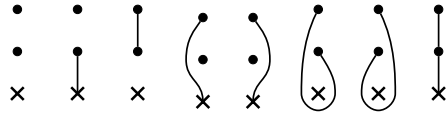

    \centering
\includestandalone[scale=0.8]{figs/b/b2.orb.allncads}
    \caption{Type-B noncrossing arc diagrams for $n=2$}
    \label{fig:b2orbncads}
\end{figure}

\begin{figure}[t]
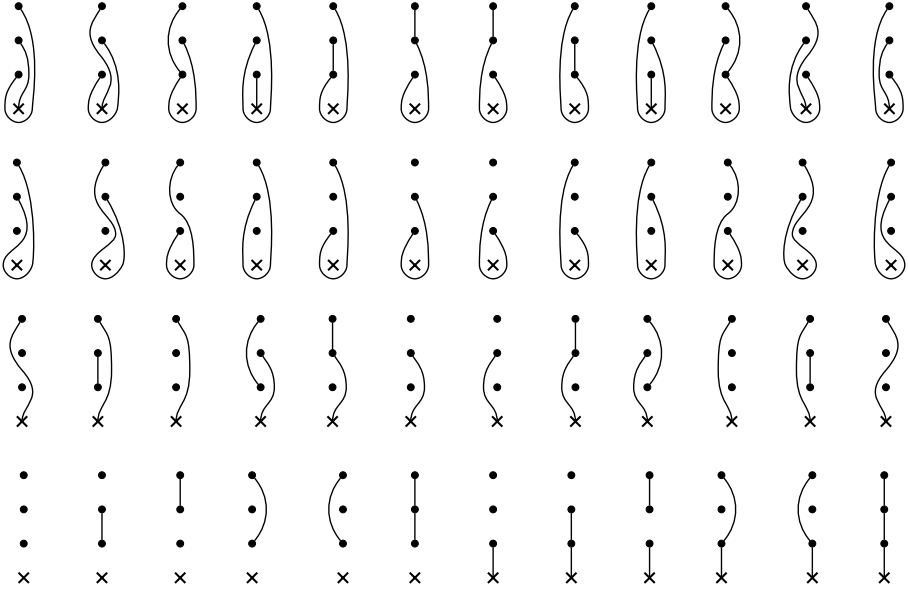

    \centering
    \includestandalone[scale=0.65]{figs/b/b3.allncads.4rows}
    \caption{Type-B noncrossing arc diagrams for $n=3$}
    \label{fig:b3allncads}
\end{figure}

The map $\phi$ is a bijection from the set of centrally symmetric arcs and centrally symmetric pairs of arcs to the set of type-B arcs, and also induces a bijection on combinatorial equivalence classes.
Nonoverlapping symmetric pairs, symmetric arcs, and overlapping symmetric pairs map respectively to ordinary, orbifold, and long arcs.
Furthermore, $\phi$ induces a bijection from centrally symmetric noncrossing arc diagrams to type-B noncrossing arc diagrams.
We write~$\delta^\circ$ for the map $\phi\circ\delta$, where $\delta$ is understood as in \cref{B sym sec} to map signed permutations to centrally symmetric noncrossing arc diagrams.
The following theorem is a restatement of \cref{main B}.
\begin{theorem}\label{main B orb}
The map $\delta^\circ$ is a bijection from $B_n$ to the set of type-B noncrossing arc diagrams on $n$ points.
\end{theorem}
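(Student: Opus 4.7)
The plan is to derive \cref{main B orb} by composing the bijection $\delta$ of \cref{main B} with the map $\phi$, so that the entire content of the proof reduces to verifying that $\phi$ induces a bijection from the set of centrally symmetric noncrossing arc diagrams on $2n$ points to the set of type-B noncrossing arc diagrams on $n$ points. Since $\delta^\circ$ is defined as $\phi\circ\delta$ and \cref{main B} already gives us that $\delta$ is a bijection from $B_n$ to centrally symmetric noncrossing arc diagrams, the whole statement follows once $\phi$ is shown to be a bijection at the level of combinatorial equivalence classes of diagrams.

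First I would verify bijectivity on individual arcs. A centrally symmetric arc or symmetric pair falls into exactly one of three classes: nonoverlapping symmetric pair, symmetric arc, or overlapping symmetric pair. I would check in each case that $\phi$ sends it to an arc of the corresponding type-B form (ordinary, orbifold, or long), and that the combinatorial data (endpoints together with the sets of left/right points) is carried over in an invertible way. For nonoverlapping pairs and symmetric arcs, this is immediate because the image lies entirely in the upper halfplane or terminates at \orb. For overlapping pairs, the key observation is that the unique semicircle point of the image long arc records the intersections of $\alpha$ and $-\alpha$ with the horizontal line, and the data of which of $\alpha,-\alpha$ lies to the right corresponds precisely to which endpoint of the long arc is its right endpoint. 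Inverting $\phi$ on each type of arc is straightforward: lift an ordinary arc to a nonoverlapping pair, lift an orbifold arc to the unique symmetric arc in its preimage, and lift a long arc to the overlapping pair obtained by taking its union with its antipodal image.

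Next I would check that $\phi$ is a bijection at the level of diagrams, which amounts to verifying that the noncrossing and shared-endpoint conditions correspond under $\phi$. In one direction, if $D$ is a centrally symmetric noncrossing arc diagram, then two arcs in $D$ intersect if and only if their $\phi$-images intersect: intersections in the upper halfplane are unchanged, and an intersection of two arcs below the horizontal line in $D$ either comes from a pair $\alpha,-\alpha$ sharing a point of the horizontal line (which is encoded by a shared semicircle point in $\phi(D)$) or is genuine and survives to $\phi(D)$. The shared-endpoint condition is likewise preserved, where an endpoint at the origin on both sides in the symmetric picture matches an endpoint at \orb. The other direction is essentially the same argument run backwards, using the explicit inverses described above.

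The main obstacle, and the only place where any real care is required, is the long-arc case in the bookkeeping of the previous paragraph, because the interaction between the left and right pieces of a long arc and the horizontal line is the subtlest place to check that combinatorial equivalence classes (rather than particular embeddings) correspond. Once that verification is in place, the theorem is simply the composition of two bijections.
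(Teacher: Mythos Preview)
Your proposal is correct and follows the paper's own approach: the paper states \cref{main B orb} as a restatement of \cref{main B}, having just observed (without further argument) that $\phi$ induces a bijection from centrally symmetric noncrossing arc diagrams to type-B noncrossing arc diagrams. You have simply filled in the details of that observation, which the paper treats as evident from the construction of the orbifold model.
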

%(In the theorem as in the definition above, diagrams are constructed on $n$ points, not including the orbifold point at the origin.)

We now give a direct description of the bijection $\delta^\circ$ from $B_n$ to the set of type-B noncrossing arc diagrams on $n$ points, as illustrated in \cref{fig:b.map.orb}.
%Given $\pi=\pi_1\cdots \pi_n\in B_n$, write each entry $\pi_i$ at the point $(i,\pi_i)$ in the plane. 
Given $\pi\in B_n$ with one-line notation $\pi_1\cdots \pi_n$, write each entry $\pi_i$ at the point $(i,\pi_i)$ in the plane, for $i=1,\ldots,n$. 
For every $i$ such that $\pi_i>\pi_{i+1}$, draw a straight line segment from $\pi_i$ to $\pi_{i+1}$. 
Additionally, if $\pi_1<0$, draw a line segment from the origin to $\pi_1$.
These line segments become arcs: 
%First, we move the numbers $\pi_1,\ldots,\pi_n$ (with $\{|\pi_1|,\ldots,|\pi_n|\}=\{1,\ldots,n\}$) horizontally to put them into a single vertical line, allowing the line segments to curve, so that they don't pass through any of the numbers. 
First, we move the numbers $\pi_1,\ldots,\pi_n$ horizontally to put them into a single vertical line, allowing the line segments to curve, so that they don't pass through any of the numbers or one another. 
Then, we rotate the negative numbers 180 degrees clockwise, allowing the (already curved) line segments connecting positive numbers to negative numbers to stretch around the origin. 
We remove the negative signs on the numbers.
Since $\set{|\pi_1|,\ldots,|\pi_n|}=\set{1,\ldots,n}$, the numbers are now $1,\ldots,n$ from bottom to top.
%We define $\delta^\circ(\pi)$ to be the resulting type-B noncrossing arc diagram. 
The resulting type-B noncrossing arc diagram is~$\delta^\circ(\pi)$.

\begin{figure}
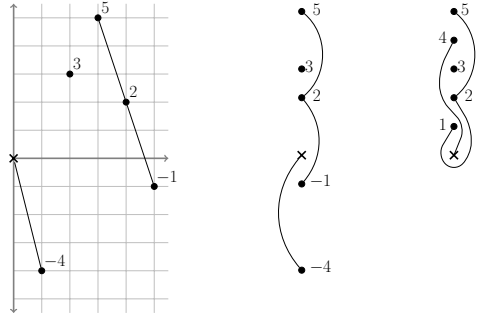

\includestandalone[height=12em]{figs/b/orb.draw}
\qquad \quad   
\includestandalone[height=12em]{figs/b/orb.smoosh}
\qquad \quad 
\includestandalone[height=12em]{figs/b/orb.wrap}
\caption{The map $\delta^\circ$ applied to $(-4)352(-1)$}
\label{fig:b.map.orb}
\end{figure}

We continue to translate the results of \cref{B sym sec} into the language of the orbifold model.
We next describe the bijection from arcs to join-irreducible elements of $B_n$.  
We continue the conventions from \cref{B sym sec} for describing join-irreducible signed permutations.

A join-irreducible signed permutation $\pi$ with a single descent $\pi_{-1}>\pi_1$ corresponds to an orbifold arc $\alpha$.
If $\alpha$ has upper endpoint $p$, then $\pi$ is
\[(-p)\,(-L(\alpha))\,R(\alpha)\,(p+1)\cdots n.\]

A join-irreducible signed permutation $\pi$ with two symmetric descents $\pi_i>\pi_{i+1}$ and $\pi_{-i-1}>\pi_{-i}$ such that $\pi_i$ and $\pi_{i+1}$ have the same sign corresponds to an ordinary arc $\alpha$.
(If $\pi_i$ and $\pi_{i+1}$ are both negative, there must be an additional inversion between $\pi_i$ and the entry before it, either $\pi_{i-1}$ or $\pi_{-1}$.) 
If $\alpha$ has endpoints $p$ and $q$ with $0<p<q$, then $\pi$ is
\[1\cdots(p-1)\,L(\alpha)\,q\,p\,R(\alpha)\,(q+1)\cdots n.\]

A join-irreducible element $\pi$ with two symmetric descents each involving a positive and a negative number corresponds to a long arc $\alpha$.
Suppose $\alpha$ has left endpoint $p$ and right endpoint $q$ (noting that as we translate this description from \cref{B sym sec}, the $p$ here is the $-p$ there).
Necessarily, $p\neq q$.
Write $B(\alpha)$ (suggesting ``between'') to denote the set of points that are left of the right piece of $\alpha$ and right of the left piece.
Thus $B(\alpha)$ is a subset of $(0,\min(p,q))$.
If $p<q$, then $\pi$ is 
\[B(\alpha)\,[(p,q)\setminus R(\alpha)]\,q\,(-p)\,(-L(\alpha))\,R(\alpha)(q+1)\cdots n.\]
If $q<p$, then $\pi$ is
\[B(\alpha)\,q\,(-p)\,(-L(\alpha))\,R(\alpha)\,[(q,p)\setminus L(\alpha)]\,(p+1)\cdots n.\]

The following is a restatement of \cref{B CJR} in the orbifold model.

\begin{theorem}\label{B orb CJR}
Given $\pi\in B_n$, the canonical join representation of $\pi$ is the set of join-irreducible elements corresponding to the arcs in $\delta^\circ(\pi)$.
\end{theorem}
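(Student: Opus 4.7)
The plan is to reduce the statement directly to Theorem \ref{B CJR} via the bijection $\phi$. Since $\delta^\circ = \phi \circ \delta$ and $\phi$ gives a combinatorial-equivalence-preserving bijection from centrally symmetric arcs/symmetric pairs of arcs to type-B arcs (with nonoverlapping symmetric pairs $\leftrightarrow$ ordinary arcs, symmetric arcs $\leftrightarrow$ orbifold arcs, and overlapping symmetric pairs $\leftrightarrow$ long arcs), the arcs in $\delta^\circ(\pi)$ are precisely the $\phi$-images of the symmetric arcs and symmetric pairs in $\delta(\pi)$. Thus, once we verify that the join-irreducible signed permutation associated to each symmetric arc/pair $A$ in Section~\ref{B sym sec} coincides with the join-irreducible signed permutation associated to the type-B arc $\phi(A)$ by the prescription given just before the theorem, the result is an immediate translation of Theorem~\ref{B CJR}.

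First, I would dispatch the two easy cases. For a symmetric arc $\alpha$ with endpoints $\pm p$, the image $\phi(\alpha)$ is an orbifold arc with upper endpoint $p$; the positive points of $R(\alpha)\cap(-p,p)$ become $R(\phi(\alpha))$, while the negative points of $R(\alpha)\cap(-p,p)$, after applying $\phi$, correspond to $-L(\phi(\alpha))$. Reading the formula $(-p)\,R(\alpha)\,(p+1)\cdots n$ of Section~\ref{B sym sec} in increasing order and splitting $R(\alpha)\cap(-p,p)$ into its negative part (which equals $-L(\phi(\alpha))$) followed by its positive part (which equals $R(\phi(\alpha))$) recovers exactly the orbifold formula $(-p)\,(-L(\phi(\alpha)))\,R(\phi(\alpha))\,(p+1)\cdots n$. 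For a nonoverlapping symmetric pair with $\alpha$ having endpoints $0<p<q$, the image $\phi(\alpha)$ is an ordinary arc with the same endpoints, the same $R$-set, and the same $L$-set, so the two formulas agree verbatim.

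The main obstacle is the long-arc case. Here $\set{\alpha,-\alpha}$ is an overlapping symmetric pair with $\alpha$ having endpoints $p<q$ where $p<0<q$, and I must check that the symmetric-model formula matches the orbifold formula in both of its subcases. Applying $\phi$ sends $\alpha$ to a long arc $\alpha^\circ$ whose endpoints in the orbifold plane are $-p$ (a positive number) and $q$. The combinatorics split according to whether $-p<q$ or $q<-p$, which correspond respectively to whether $\alpha^\circ$ has left endpoint $-p$ with right endpoint $q$ (so, using the orbifold notation in which long-arc endpoints are written $(p^\circ,q^\circ)=(-p,q)$ with $p^\circ<q^\circ$), or left endpoint $q$ and right endpoint $-p$ (so $q^\circ<p^\circ$). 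In each subcase, I would carefully translate the definitions of $B(\alpha^\circ)$, $R(\alpha^\circ)$, $L(\alpha^\circ)$ back through $\phi$: points of $B(\alpha^\circ)$ come from numbered points in $(0,\min(-p,q))$ trapped between $\alpha$ and $-\alpha$ in the symmetric picture, while $R(\alpha^\circ)$ and $L(\alpha^\circ)$ are the parts of $R(\alpha)$ beyond the endpoints on the appropriate sides. Substituting these identifications into the two symmetric-model formulas (the ones for $p<0<-p<q$ and $p<0<q<-p$) should reproduce the two orbifold long-arc formulas exactly.

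Once this case analysis is complete, the theorem follows from Theorem~\ref{B CJR} with no additional argument, because both canonical-join-representation statements are asserting the same multiset of join-irreducible elements under the identification induced by $\phi$.
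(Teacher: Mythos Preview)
Your proposal is correct and takes essentially the same approach as the paper: both obtain the theorem as a direct translation of \cref{B CJR} via the bijection~$\phi$. The paper in fact treats the result as an immediate restatement with no separate proof, since the orbifold formulas for join-irreducibles are \emph{defined} as the $\phi$-translations of the symmetric-model formulas; your explicit case-check is a reasonable elaboration of that. One small correction to your long-arc outline: since $\alpha$ is right of $-\alpha$, the long arc $\phi(\{\alpha,-\alpha\})$ always has left endpoint $-p$ and right endpoint $q$; the two subcases in the orbifold formulas are distinguished only by which of $-p$ and $q$ is larger, not by which endpoint is left or right.
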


Two type-B arcs are \newword{compatible} if and only if they do not intersect, except possibly at numbered endpoints, and don't share the same top endpoint or the same bottom endpoint. 
In other words, they are compatible if and only if they can appear together in a type-B noncrossing arc diagram.
As a consequence of \cref{main B orb,B orb CJR}, two join-irreducible elements of $B_n$ are compatible (can appear together in a canonical join representation) if and only if the corresponding type-B arcs are compatible.

The following three propositions are restatements of \cref{B shard sym arc,B shard sym pair}.
(\cref{B shard A,B shard long} break \cref{B shard sym pair} into two cases.)

\begin{proposition}\label{B shard to orb}
Suppose $\alpha$ is an orbifold arc with upper endpoint~$p$.
The shard associated to $\alpha$ is 
\[\set{\x\in\reals^n:x_p=0\text{ and }0\leq x_i\,\forall i\in R(\alpha)\text{ and }0\geq x_i\,\forall i\in L(\alpha)}.\]
\end{proposition}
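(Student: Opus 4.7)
The plan is to derive this directly from \cref{B shard sym arc} via the correspondence $\phi$ between the centrally symmetric model and the orbifold model, since the proposition is explicitly labeled as a restatement.

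First I would set up the translation. Under $\phi$, an orbifold arc $\alpha$ with upper endpoint $p$ corresponds to a symmetric arc $\tilde\alpha$ in the centrally symmetric model with endpoints $-p$ and $p$, and the shards are defined in $\reals^n$ for both models (the orbifold structure appears only in the pictures). So the two shards in question are literally the same subset of $\reals^n$, and the task reduces to showing that the set described in \cref{B shard sym arc} equals the set described here.

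The key combinatorial step is to compare right/left sets across the two models. For a numbered point $i\in\{1,\ldots,n\}$, I would verify directly from the construction of $\phi$ (a half-turn rotation of the lower halfplane, which reverses the roles of left and right) that $i\in R(\alpha)$ in the orbifold sense if and only if $i\in R(\tilde\alpha)$ in the symmetric sense, and $i\in L(\alpha)$ in the orbifold sense if and only if $-i\in R(\tilde\alpha)$ in the symmetric sense. Thus $R(\tilde\alpha)$ (as a subset of $\{\pm 1,\ldots,\pm n\}$ contained in the interval $(-p,p)$) equals $R(\alpha)\cup(-L(\alpha))$.

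Finally, I would translate the inequalities. \cref{B shard sym arc} gives the constraints $x_p=0$ and $0\le x_i$ for all $i\in R(\tilde\alpha)$. Splitting this into the positive and negative parts of $R(\tilde\alpha)$ and using the convention $x_{-i}=-x_i$, the inequalities $0\le x_i$ for $i\in R(\alpha)$ remain unchanged, while the inequalities $0\le x_{-i}$ for $i\in L(\alpha)$ become $0\ge x_i$ for $i\in L(\alpha)$. This produces exactly the set displayed in the statement. There is no substantive obstacle here; the only thing to be careful about is the sign convention $\e_{-i}=-\e_i$ and $x_{-i}=-x_i$ fixed in \cref{B sym sec}, which is what makes the rewriting of inequalities work cleanly.
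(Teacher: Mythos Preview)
Your proposal is correct and matches the paper's approach: the paper states this proposition explicitly as a restatement of \cref{B shard sym arc} via the map $\phi$ and gives no separate proof, so you have simply filled in the translation details the paper leaves implicit. The key identification $R(\tilde\alpha)=R(\alpha)\cup(-L(\alpha))$ and the use of the convention $x_{-i}=-x_i$ are exactly what is needed, and you have handled them correctly.
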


\begin{proposition}\label{B shard A}  
Suppose $\alpha$ is an ordinary arc with endpoints $p$ and $q$.
The shard associated to~$\alpha$ is 
\[\set{\x\in\reals^n:x_p=x_q\text{ and }x_p\leq x_i\,\forall i\in R(\alpha)\text{ and }x_p\geq x_i\,\forall i\in L(\alpha)}.\]
\end{proposition}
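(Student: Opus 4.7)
The plan is to transport \cref{B shard sym pair} along the bijection $\phi$. According to the description of $\phi$ at the start of \cref{B orb sec}, an ordinary arc $\alpha$ with endpoints $p$ and $q$ (both strictly positive; without loss of generality $0<p<q$, since the displayed expression is symmetric in $p$ and $q$) is the image under $\phi$ of a unique nonoverlapping symmetric pair $\set{\alpha',-\alpha'}$. Here $\alpha'$ is the arc living entirely in the open upper halfplane with the same endpoints $p,q$ and the same combinatorial left/right data as $\alpha$, and $-\alpha'$ is its antipodal image.

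First I would verify that $R(\alpha')=R(\alpha)$ and $L(\alpha')=L(\alpha)$. This is immediate because $\phi$ restricts to the identity on the open upper halfplane, and every numbered point strictly between $p$ and $q$ (where $R(\alpha)$ and $L(\alpha)$ live) lies there. Second, by the construction of the orbifold model as a $\phi$-pushforward of the centrally symmetric model, the join-irreducible signed permutation corresponding to $\alpha$ in the orbifold model agrees with the one corresponding to $\set{\alpha',-\alpha'}$ in the centrally symmetric model; by \cref{j shard}, the shards attached to these join-irreducibles coincide as subsets of $\reals^n$.

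Third, applying \cref{B shard sym pair} to $\set{\alpha',-\alpha'}$ gives exactly the displayed description, with $R(\alpha)$ and $L(\alpha)$ substituted for $R(\alpha')$ and $L(\alpha')$. There is no serious obstacle: the geometric content is already contained in \cref{B shard sym pair}, and the only thing to check is that the orbifold-side combinatorial data for a nonoverlapping pair is faithfully recorded by $\phi$. That check is trivial in the present case because nonoverlapping symmetric pairs stay strictly on one side of the horizontal axis, so neither the endpoints nor any of the left/right points get identified with anything by $\phi$.
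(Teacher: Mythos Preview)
Your proposal is correct and matches the paper's approach exactly: the paper does not give a separate proof but states that \cref{B shard A} (together with \cref{B shard long}) is simply a restatement of \cref{B shard sym pair} in the orbifold model, split according to whether the symmetric pair is nonoverlapping or overlapping. Your write-up supplies precisely the routine verification that this restatement is valid for the nonoverlapping case.
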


\begin{proposition}\label{B shard long}  
Suppose $\alpha$ is a long arc with left endpoint $p$ and right endpoint~$q$.
The shard associated to $\alpha$ is 
\begin{multline*}
\bigl\{\x\in\reals^n:x_q=-x_p\text{ and }x_q\leq x_i\,\forall i\in (R(\alpha)\cup(-L(\alpha)))\\
\text{ and }x_q\geq x_i\,\forall i\in ((-p,q)\setminus( R(\alpha)\cup(-L(\alpha)))\bigr\}.
\end{multline*}
\end{proposition}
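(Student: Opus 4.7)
The plan is to derive this proposition as a direct consequence of \cref{B shard sym pair} via the bijection $\phi$ between the centrally symmetric and orbifold models. Given a long arc $\alpha$ with left endpoint $p$ and right endpoint $q$, its $\phi$-preimage is an overlapping centrally symmetric pair $\{\tilde\alpha, -\tilde\alpha\}$ in which $\tilde\alpha$ has endpoints $-p$ and $q$ and is right of $-\tilde\alpha$. Under $\phi$, the portion of $\tilde\alpha$ lying above the horizontal line becomes the right piece of $\alpha$, and the portion of $-\tilde\alpha$ lying above the horizontal line becomes the left piece of $\alpha$.

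Applying \cref{B shard sym pair} to $\tilde\alpha$ (with endpoints $-p$ and $q$), the shard is cut out by $x_{-p} = x_q$, by $x_{-p} \leq x_i$ for $i \in R(\tilde\alpha)$, and by $x_{-p} \geq x_i$ for $i \in L(\tilde\alpha)$. Using the convention $x_{-i} = -x_i$, the equality rewrites as $x_q = -x_p$. What remains is to identify $R(\tilde\alpha)$ and $L(\tilde\alpha)$ in orbifold terms. A positive index $i$ lies in $R(\tilde\alpha)$ exactly when $i$ is right of the upper part of $\tilde\alpha$, i.e., exactly when $i \in R(\alpha)$. A negative index $-j$ lies in $R(\tilde\alpha)$ exactly when the lower-half point $-j$ is right of $\tilde\alpha$'s lower part; since the antipodal flip carrying the lower half to the upper half reverses left and right, this is equivalent to $j$ being left of the upper part of $-\tilde\alpha$, i.e., $j \in L(\alpha)$. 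Hence $R(\tilde\alpha) = R(\alpha) \cup (-L(\alpha))$, and since $L(\tilde\alpha)$ is its complement within the index set $(-p, q) \cap \{\pm 1, \ldots, \pm n\}$, we obtain $L(\tilde\alpha) = (-p, q) \setminus (R(\alpha) \cup (-L(\alpha)))$. Substituting into the shard description and using the equality $x_{-p} = x_q$ yields exactly the formula in the statement.

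The main obstacle is the bookkeeping for the antipodal flip: one must keep straight that being ``right of $\tilde\alpha$'' below the horizontal line becomes being ``left of the left piece'' in the orbifold, and symmetrically that being ``left of $\tilde\alpha$'' below the line becomes being ``right of the left piece'' (i.e., either between the pieces or right of the right piece). Once this dictionary is fixed---parallel to the case analysis used in translating the description of join-irreducibles preceding \cref{B orb CJR}---the verification is routine.
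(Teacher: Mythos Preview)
Your proposal is correct and matches the paper's approach exactly: the paper states that \cref{B shard long} is simply a restatement of \cref{B shard sym pair} in the orbifold model (via~$\phi$), without spelling out the translation. You have correctly filled in the dictionary---in particular the identification $R(\tilde\alpha)=R(\alpha)\cup(-L(\alpha))$ using the left/right reversal of the antipodal flip---which is precisely what the paper leaves implicit.
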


We now turn to translating the definitions of subarcs and superarcs, as well as \cref{arc forcing B}, to the orbifold model, using the map $\phi$.
There is a case-free way to define subarcs of a type-B arc $\alpha$:
Informally, we can cut $\alpha$ shorter at one or both of its endpoints and reattach the shortened curve to a numbered point.  
If the result is a type-B arc (or can be easily modified to become an orbifold arc as made precise below), then it is a subarc of $\alpha$.
Together, \cref{subs of ord/orb,subs of long}, below, correspond to \cref{subs of sym,subs of pair}, but the split into two definitions is not the same here as in \cref{B sym sec}.

\begin{figure}
\centering
\begin{tabular}{c|c}
\begin{tabular}{cccp{18pt}c}
\includestandalone[height=5em]{figs/b/arrow.O.orb.orb.sub}
&\includestandalone[height=5em]{figs/b/subarc.O.orb.non}
&\includestandalone[height=5em]{figs/b/subarc.O.orb.non1}
&&\includestandalone[height=5em]{figs/b/arrow.O.orb.big}
\\
$\alpha'$&$\alpha'$&$\alpha'$&&$\alpha$
\end{tabular}
&
\begin{tabular}{cccp{18pt}c}
\includestandalone[height=5em]{figs/b/arrow.O.long.orb.orb}
&
\includestandalone[height=5em]{figs/b/subarc.O.long.mid}
&
\includestandalone[height=5em]{figs/b/arrow.O.long.orb.A}
&&
\includestandalone[height=5em]{figs/b/arrow.O.long.big}
\\
$\alpha'$&$\alpha'$&$\alpha'$&&$\alpha$
\end{tabular}
\\[35pt]
\small\cref{subs of ord/orb}
&
\small\cref{subs of long}
\end{tabular}
\caption{Subarcs}
\label{subarcs orb}
\end{figure}

\begin{definition}[Subarcs of ordinary and orbifold arcs]\label{subs of ord/orb}
Suppose $\alpha$ is an ordinary or orbifold arc with endpoints $p<q$, with \orb$=0$ for the purposes of this definition.
A \newword{subarc} of $\alpha$ is an arc $\alpha'$ with endpoints $p'$ and $q'$ having $p\le p'<q'\le q$ and $R(\alpha')=R(\alpha)\cap(p',q')$.
The arc $\alpha'$ is ordinary unless $p'=0$, in which case $\alpha'$ is an orbifold arc.
\end{definition}

\begin{definition}[Subarcs of long arcs]\label{subs of long}
Suppose $\alpha$ is a long arc with left endpoint~$p$ and right endpoint $q$.
A \newword{subarc} of $\alpha$ can be ordinary, orbifold, or long.
\begin{itemize}
\item 
An ordinary arc $\alpha'$ with endpoints $p'<q'$ is a subarc of $\alpha$ if and only if one of the following occurs:
either $q'\le p$ and $L(\alpha')=L(\alpha)\cap(p',q')$ or $q'\le q$ and $R(\alpha')=R(\alpha)\cap(p',q')$.
%(Loosely, $\alpha'$ is a subarc of the left piece of $\alpha$ or of the right piece of $\alpha$.)
\item
An orbifold arc $\alpha'$ with upper endpoint $p'$ is a subarc of $\alpha$ if and only if $p'\le\min(p,q)$, $L(\alpha')=L(\alpha)\cap(0,p')$, and $R(\alpha')=R(\alpha)\cap(0,p')$.
%(Loosely, $\alpha'$ is a subarc of both the left piece of $\alpha$ and the right piece of $\alpha$.)
\item 
A long arc $\alpha'$ with left endpoint $p'$ and right endpoint $q'$ is a subarc of $\alpha$ if and only if $p'\le p$, $q'\le q$, $L(\alpha')=L(\alpha)\cap(0,p')$, $R(\alpha')=R(\alpha)\cap(0,q')$, and the following additional conditions hold:
$p'\not\in R(\alpha)$ and $q'\not\in L(\alpha)$.
\end{itemize}
\end{definition}

\begin{remark}\label{orb explicit}
To better understand long subarcs of long arcs in \cref{subs of long}, we discuss an explicit construction of these subarcs.
Compare similar constructions in \cref{A explicit,sym explicit}.
Take $h$ to be a height function on the orbifold 
plane with each numbered point $i$ having height $i$ and with every semicircle
point in the closed lower halfplane having height $0$.

Suppose $\alpha$ is a long type-B arc, parametrized as a function from the interval $[0,1]$ into the orbifold plane.
The construction of a long subarc $\alpha'$ begins as follows:
Choose $t_1$ and $t_2$ with $0\le t_1<t_2\le1$ such that $h(\alpha(t_1))$ and $h(\alpha(t_2))$ are distinct elements of $\set{1,\ldots,n}$.
Choose $\ep_1>0$ such that $|h(\alpha(t_1+\ep_1))-h(\alpha(t_1))|<1$ and such that $h(\alpha(t))>0$ for all $t$ in the interval $(t_1,t_1+\ep_1)$.
%That is, the curve $\alpha$ does not contain a semicircle point between $\alpha(t_1)$ and $\alpha(t_1+\ep_1)$.
Similarly, choose $\ep_2>0$ such that $t_1+\ep_1<t_2-\ep_2$, such that  $|h(\alpha(t_2-\ep_2))-h(\alpha(t_2))|<1$, and such that $h(\alpha(t))>0$ for all $t$ in the interval $(t_2-\ep_2,t_2)$.
Define $\alpha'$ to be the \emph{curve} obtained by concatenating three curves:
First, the straight line segment from the point numbered $h(\alpha(t_1))$ to the point $\alpha(t_1+\ep_1)$;
second, the restriction of $\alpha$ to the interval $[t_1+\ep_1,t_2-\ep_2]$;
and third, the straight line segment from $\alpha(t_2-\ep_2)$ to the point numbered $h(\alpha(t_2))$.

If $\alpha'$ is a valid long arc, then it is a subarc of $\alpha$.
It fails to be a type-B arc if and only if it crosses itself, in which case, this choice of $t_1$ and $t_2$ does not produce a subarc of $\alpha$.
See \cref{fig:subarc fail orb}.
\end{remark}

\begin{figure}
    \centering
\begin{tabular}{cccc}
\includestandalone[height=6em]{figs/b/subarc.O.counterex.big} 
&& & \includestandalone[height=6em]{figs/b/subarc.O.counterex.small} \\ 
$\alpha$ &&& $\alpha'$
\end{tabular}
    \caption{A failed construction of a subarc, per \cref{subs of long,orb explicit}}
    \label{fig:subarc fail orb}
\end{figure}

The map $\phi$ translates the subarc relation on symmetric arcs/pairs exactly to the subarc relation on type-B arcs.
%(This is perhaps even more easily seen from the superarc descriptions.)
The following results are direct translations of \cref{arc forcing B,B uncontracted,B cong bij}.

\begin{theorem}\label{arc forcing B orb}
Let $j_1$ and $j_2$ be join-irreducible signed permutations.
Then $j_1$ forces $j_2$ if and only if the type-B arc corresponding to $j_1$ is a subarc of the type-B arc corresponding to $j_2$.
\end{theorem}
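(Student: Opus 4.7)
The plan is to derive this theorem directly from \cref{arc forcing B} by transporting everything through the map $\phi$ from \cref{B sym sec}. The strategy has essentially no new content beyond verifying that $\phi$ is an isomorphism of all the relevant structures.

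First, I would record the identifications induced by $\phi$ on the classes of arcs. The map $\phi$ is a bijection from centrally symmetric arcs/pairs of arcs to type-B arcs in the orbifold plane that sends symmetric arcs to orbifold arcs, nonoverlapping symmetric pairs to ordinary arcs, and overlapping symmetric pairs to long arcs (where ``left/right'' endpoint of the long arc records which arc in the overlapping pair is to the right). Composing the bijection from join-irreducible signed permutations to symmetric arcs/pairs (described in \cref{B sym sec}) with $\phi$ gives exactly the bijection from join-irreducible signed permutations to type-B arcs used in this section, as encoded in the one-line notation formulas immediately preceding \cref{B orb CJR}. Let $A_1$ and $A_2$ be the symmetric arcs/pairs corresponding to $j_1$ and $j_2$, and let $\alpha_1=\phi(A_1)$, $\alpha_2=\phi(A_2)$ be the corresponding type-B arcs.

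Second, and this is the step requiring actual verification, I would check that ``$A_1$ is a subarc or subarc pair of $A_2$'' (in the sense of \cref{subs of sym,subs of pair}) is equivalent to ``$\alpha_1$ is a subarc of $\alpha_2$'' (in the sense of \cref{subs of ord/orb,subs of long}). I would proceed case by case on the type of $A_2$ (equivalently, of $\alpha_2$). When $A_2$ is a symmetric arc (so $\alpha_2$ is an orbifold arc), the two cases of \cref{subs of sym} match the two cases of \cref{subs of ord/orb} (subarc with $p'=0$ versus $p'>0$), using that $R(\alpha)$ in the orbifold picture records the symmetric right set on the positive side. When $A_2$ is a nonoverlapping symmetric pair (so $\alpha_2$ is an ordinary arc), the single case of \cref{subs of pair} matches \cref{subs of ord/orb}, since in the orbifold picture the symmetry is trivial on the subarc. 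When $A_2$ is an overlapping symmetric pair (so $\alpha_2$ is a long arc), the three clauses of \cref{subs of long} (ordinary, orbifold, long subarc) match respectively the three possible types a subarc/subarc pair of an overlapping pair can take in \cref{subs of pair}. The additional requirement in \cref{subs of pair} that $\alpha'$ be right of $-\alpha'$ whenever $\alpha$ is right of $-\alpha$ matches exactly the additional conditions $p'\not\in R(\alpha)$ and $q'\not\in L(\alpha)$ in the long case of \cref{subs of long}; this is the step where I would be most careful and where the parallel explicit constructions in \cref{sym explicit,orb explicit} (see \cref{fig:subarc fail,fig:subarc fail orb}) make the correspondence evident.

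Finally, having established that subarc/subarc pair in the centrally symmetric model corresponds under $\phi$ to subarc in the orbifold model, the theorem follows immediately from \cref{arc forcing B}: $j_1$ forces $j_2$ iff $A_1$ is a subarc/subarc pair of $A_2$ iff $\alpha_1$ is a subarc of $\alpha_2$. The main obstacle is purely bookkeeping, namely the case analysis in the second step, particularly the long-arc case where the ``additional requirement'' of \cref{subs of pair} must be matched against the conditions $p'\not\in R(\alpha)$ and $q'\not\in L(\alpha)$ of \cref{subs of long}; once this is verified, no new proof is needed.
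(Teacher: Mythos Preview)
Your proposal is correct and follows exactly the paper's approach: the paper simply asserts that ``$\phi$ translates the subarc relation on symmetric arcs/pairs exactly to the subarc relation on type-B arcs'' and then states that \cref{arc forcing B orb} is a ``direct translation'' of \cref{arc forcing B}, without spelling out the case-by-case verification. Your plan makes that verification explicit, which is useful, but there is no difference in strategy.
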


\begin{corollary}\label{orb uncontracted}
A set $U$ of type-B arcs corresponds to the set of \emph{un}contracted join-irreducible signed permutations of some congruence $\Theta$ on $B_n$ if and only if $U$ is closed under passing to subarcs.
\end{corollary}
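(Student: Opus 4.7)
The plan is to deduce \cref{orb uncontracted} as an immediate consequence of \cref{arc forcing B orb} together with standard lattice-theoretic facts relating congruences to the forcing pre-order on join-irreducibles. Indeed, \cref{orb uncontracted} is the orbifold-model translation of \cref{B uncontracted}, and the proof is the same modulo the observation (recorded earlier in \cref{B orb sec}) that the bijection $\phi$ from symmetric arcs/pairs to type-B arcs carries the subarc relation of \cref{subs of sym,subs of pair} to the subarc relation of \cref{subs of ord/orb,subs of long}.

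First I would dispatch the ``only if'' direction. By the very definition of forcing, if $\Theta$ is a congruence and $j_1$ is contracted by $\Theta$, then every $j_2$ forced by $j_1$ is also contracted by $\Theta$; equivalently, the set of join-irreducibles \emph{not} contracted by $\Theta$ is closed under the operation of replacing a join-irreducible by one that forces it. Suppose $U$ is the set of type-B arcs corresponding to uncontracted join-irreducibles of $\Theta$. If $\alpha_1$ is a subarc of some $\alpha_2 \in U$, then by \cref{arc forcing B orb} the join-irreducible $j_1$ corresponding to $\alpha_1$ forces the join-irreducible $j_2$ corresponding to $\alpha_2$. Since $j_2$ is uncontracted, $j_1$ must also be uncontracted, and hence $\alpha_1 \in U$. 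Thus $U$ is closed under passing to subarcs.

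For the ``if'' direction, I would invoke the fact (noted at the end of \cref{weak sec}) that the weak order on $B_n$ is congruence uniform, which in particular says that the forcing relation is a partial order and that the map sending a congruence to its set of contracted join-irreducibles is a bijection between congruences and up-sets of the forcing order. Given $U$ closed under passing to subarcs, let $J$ be the corresponding set of join-irreducibles and $J^c$ its complement among all join-irreducibles. By \cref{arc forcing B orb}, $J$ is a down-set in the forcing order, so $J^c$ is an up-set. Congruence uniformity produces a congruence $\Theta$ whose contracted join-irreducibles are exactly $J^c$, so $U$ records the uncontracted join-irreducibles of $\Theta$.

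The main obstacle is essentially nil: once \cref{arc forcing B orb} is in hand, the corollary is pure bookkeeping. The only nontrivial input from outside is congruence uniformity of type B, which is needed only to guarantee the existence of the congruence in the ``if'' direction, and the translation between the centrally symmetric and orbifold subarc relations via $\phi$, which is immediate from matching up the case distinctions in \cref{subs of sym,subs of pair} with those in \cref{subs of ord/orb,subs of long}.
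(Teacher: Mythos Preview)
Your proposal is correct and follows the same approach as the paper, which simply declares \cref{orb uncontracted} to be a ``direct translation'' of \cref{B uncontracted} (itself described as ``a rephrasing of \cref{arc forcing B}'') via the bijection $\phi$. You have supplied the routine lattice-theoretic details (forcing and congruence uniformity) that the paper leaves implicit.
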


\begin{corollary}\label{orb cong bij}
If $\Theta$ is a congruence on $B_n$ and $U$ is the set of type-B arcs corresponding to join-irreducible permutations \emph{not} contracted by $\Theta$, then~$\delta^\circ$ restricts to a bijection from the quotient $B_n/\Theta$ (the set of  signed permutations not contracted by $\Theta$) to the set of type-B noncrossing arc diagrams consisting only of arcs in $U$.
\end{corollary}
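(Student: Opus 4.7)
The plan is to obtain this corollary as a direct transport, via the map $\phi$, of the already-established \cref{B cong bij}, rather than reprove everything in the orbifold setting. The bookkeeping splits into two independent pieces: a statement about which signed permutations survive in the quotient (which is intrinsic to $B_n$ and $\Theta$, and therefore already settled by \cref{B cong bij}) and a statement about the target arc diagrams (which is where $\phi$ does its work).

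First I would record the basic compatibility of $\phi$ with the constructions of Section \ref{B sym sec}. Namely, $\phi$ is a bijection from centrally symmetric arcs and symmetric pairs of arcs to type-B arcs (sending nonoverlapping pairs, symmetric arcs, and overlapping pairs to ordinary, orbifold, and long arcs respectively), and it induces a bijection on combinatorial equivalence classes and on noncrossing arc diagrams. By the description of the bijection between arcs/pairs and join-irreducible signed permutations given after \cref{main B orb}, $\phi$ commutes with the identification of arcs (or pairs) with join-irreducibles: a symmetric arc or symmetric pair $A$ and its image $\phi(A)$ correspond to the \emph{same} join-irreducible element of $B_n$. Since $\delta^\circ = \phi\circ\delta$, this is essentially the content of \cref{main B orb}.

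The corollary then follows in a single step. Let $U$ be the set of type-B arcs corresponding to uncontracted join-irreducibles, and let $U^{\mathrm{sym}}=\phi^{-1}(U)$ be the corresponding set of symmetric arcs/pairs. By \cref{B cong bij}, $\delta$ restricts to a bijection from $B_n/\Theta$ to the set of centrally symmetric noncrossing arc diagrams whose arcs/pairs all lie in $U^{\mathrm{sym}}$. Applying $\phi$ to the target turns a centrally symmetric noncrossing arc diagram into a type-B noncrossing arc diagram, and an arc/pair lies in $U^{\mathrm{sym}}$ precisely when its $\phi$-image lies in $U$. Thus $\delta^\circ=\phi\circ\delta$ is the required bijection onto noncrossing arc diagrams whose arcs all lie in $U$.

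The only real subtlety — and the one place I would write things out carefully rather than asserting — is the claim that $\phi$ genuinely gives a bijection on (combinatorial equivalence classes of) diagrams that intertwines the two compatibility notions. This is why it is safer to pull everything through \cref{B cong bij} rather than try to redo \cref{CJR quotient} and \cref{CJC quotient} intrinsically in the orbifold picture: once the dictionary $A\leftrightarrow\phi(A)$ is in place at the level of arcs, diagrams, join-irreducibles, and compatibility, there is nothing else to check, and no step requires an independent argument about ``long'' versus ``ordinary'' versus ``orbifold'' arcs.
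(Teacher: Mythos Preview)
Your proposal is correct and matches the paper's approach exactly: the paper states that \cref{arc forcing B orb}, \cref{orb uncontracted}, and \cref{orb cong bij} are ``direct translations'' of \cref{arc forcing B}, \cref{B uncontracted}, and \cref{B cong bij} via the map~$\phi$, and gives no further argument. Your write-up simply spells out the dictionary that justifies this translation, which is fine.
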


A type-B arc $\alpha'$ is a \newword{superarc} of a type-B arc $\alpha$ if and only if $\alpha$ is a subarc of~$\alpha'$.
We now describe how to construct superarcs. 

\medskip
\noindent
\textbf{Superarcs of an orbifold arc $\alpha$.}
These can be orbifold arcs or long arcs.
We construct an orbifold superarc by pushing $\alpha$ left or right of its top endpoint and extending it upwards to a new upper endpoint.
We construct a long superarc by first replacing $\alpha$ with a curve that goes around \orb, with both endpoints at the original endpoint of $\alpha$, and with its left piece and its right piece having exactly the same right points as $\alpha$.
We then push one or both ends off of the endpoint (one left and one staying, one right and one staying, both left, one on each side, or both right, but not creating a self-intersection), and extend upwards without creating self-intersections, to make a long superarc.

\medskip
\noindent
\textbf{Superarcs of an ordinary arc $\alpha$.}
These can be of any of the three kinds.
To construct a superarc, we push the top or bottom of $\alpha$ independently to the left or right.
We then extend upwards and/or downwards.
The downward extension can end before reaching \orb (to make an ordinary superarc), can have an endpoint at \orb (to make an orbifold superarc), or can go around \orb before ending (to make a long superarc), provided that no self-intersections are created.

\medskip
\noindent
\textbf{Superarcs of a long arc $\alpha$.}
Every superarc of $\alpha$ is long.
We construct a superarc by pushing one or both endpoints of $\alpha$ left or right and extending upwards, without creating self-intersections.

\medskip

We close this section with a translation of \cref{Emily's B arcs}, which will be useful in the sequel.
As before, we write arrows $\to$ between type-B arcs to indicate arrows between the corresponding shards.

\begin{proposition}\label{B orb arrows}  
Suppose $\alpha_1$ and $\alpha_2$ are type-B arcs.
Then $\alpha_1\to\alpha_2$ if and only if $\alpha_1$ is a subarc of $\alpha_2$ and one of the following conditions holds.  \begin{enumerate}[\qquad\rm(i)]
\item \label{nonorb nonorb}
Neither $\alpha_1$ nor $\alpha_2$ is an orbifold arc, and $\alpha_1$ and $\alpha_2$ have exactly one endpoint in common.
\item %\label{orb orb}
$\alpha_1$ and $\alpha_2$ are orbifold arcs with different upper endpoints.
\item \label{orb ord}
$\alpha_2$ is an orbifold arc and $\alpha_1$ is ordinary, with the same upper endpoint.
\item \label{long orb}
$\alpha_2$ is a long arc with endpoints $p<q$ and no numbered points between its left and right pieces, and $\alpha_1$ is an orbifold arc with upper endpoint~$p$.
\item \label{long ord}
$\alpha_2$ is a long arc with endpoints $p<q$ and no numbered points between its left and right pieces, and $\alpha_1$ is an ordinary arc with endpoints $p$ and~$q$.
\end{enumerate}
\end{proposition}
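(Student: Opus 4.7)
The plan is to deduce this proposition from \cref{Emily's B arcs} by pushing the arrow characterization through the bijection $\phi$. The map $\phi$ is a bijection from symmetric arcs and symmetric pairs of arcs to type-B arcs, sending nonoverlapping symmetric pairs to ordinary arcs, symmetric arcs to orbifold arcs, and overlapping symmetric pairs to long arcs. Since both models describe the same Coxeter arrangement of type $B_n$, the shard correspondence is preserved by $\phi$, so $\alpha_1 \to \alpha_2$ in the orbifold shard digraph if and only if $\phi^{-1}(\alpha_1) \to \phi^{-1}(\alpha_2)$ in the shard digraph of the centrally symmetric model. Moreover, comparison of \cref{subs of sym,subs of pair} with \cref{subs of ord/orb,subs of long} shows that $\phi$ translates the subarc/subarc-pair relation exactly to the subarc relation on type-B arcs.

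Given this, I would verify case-by-case that the five conditions of \cref{Emily's B arcs} correspond under $\phi$ to the five conditions of the present proposition. Condition~(i) of \cref{Emily's B arcs}, where both $A_1$ and $A_2$ are symmetric pairs sharing exactly one numerical endpoint, translates to condition~(i) here, because a symmetric arc/pair maps to a non-orbifold type-B arc precisely when it is a symmetric pair, and sharing exactly one numerical endpoint in the symmetric model corresponds to sharing exactly one numbered endpoint in the orbifold plane. Conditions~(ii) and~(iii) of \cref{Emily's B arcs} translate directly to conditions~(ii) and~(iii) here, using that symmetric arcs correspond to orbifold arcs.

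For conditions~(iv) and~(v), the overlapping pair $\{\alpha_2, -\alpha_2\}$ with endpoints $p_s < q_s$ satisfying $-q_s < p_s < 0 < -p_s < q_s$ maps under $\phi$ to a long arc with left endpoint $-p_s$ and right endpoint $q_s$ (noting $-p_s < q_s$, so $-p_s$ is the smaller orbifold endpoint). The condition in \cref{Emily's B arcs} that each point of small absolute value lies on the same side of $\alpha_2$ as of $-\alpha_2$ says precisely that no numbered points lie strictly between $\alpha_2$ and $-\alpha_2$, and this becomes the condition ``no numbered points between the left and right pieces'' of the long arc. In (iv), $A_1$ is a symmetric arc with endpoints $\pm p_s$, which maps to an orbifold arc with upper endpoint $-p_s$, matching the smaller endpoint of $\alpha_2$ as required. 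In (v), $A_1$ is a nonoverlapping pair with $\alpha_1$ having endpoints $-p_s$ and $q_s$, which maps to an ordinary arc with those same two endpoints, matching the endpoints of $\alpha_2$.

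The main obstacle, though routine, is the careful bookkeeping of endpoint labels between the two models in cases (iv) and (v), especially verifying that the ``no numbered points between the left and right pieces'' condition is exactly the image under $\phi$ of the symmetry condition on $\alpha_2$ and $-\alpha_2$. Once this case-by-case correspondence is verified, the proposition follows immediately from \cref{Emily's B arcs}.
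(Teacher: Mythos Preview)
Your proposal is correct and is exactly what the paper does: the proposition is stated there as ``a translation of \cref{Emily's B arcs}'' with no separate proof, and your case-by-case matching of conditions (i)--(v) through the bijection $\phi$ is precisely that translation. Your handling of the endpoint bookkeeping in cases (iv) and (v), and your identification of the exclusions $p'\neq -p$, $q'\neq -q$ in \cref{Emily's B arcs}\eqref{pair pair} with the ``exactly one endpoint in common'' clause of~\eqref{nonorb nonorb}, are the only places where care is needed, and you have them right.
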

%We emphasize that condition~\eqref{nonorb nonorb} disallows arrows from a long arc to an ordinary arc with the same endpoints, but condition~\eqref{long ord} provides an exception when the long arc has no numbered points between its left and right pieces.
%Also, in~\eqref{long orb} and \eqref{long ord}, it does not matter whether $p$ or $q$ is the left endpoint.
 
% \begin{proposition}\label{B orb arrows}
% Suppose $\alpha_1$ and $\alpha_2$ are type-B arcs.
% Then $\alpha_1\to\alpha_2$ if and only if $\alpha_1$ is a subarc of $\alpha_2$ and one of the following conditions holds.  \begin{enumerate}[\qquad\rm(i)]
% \item \label{no orb 1 end}
% Neither $\alpha_1$ nor $\alpha_2$ is an orbifold arc, and $\alpha_1$ and $\alpha_2$ have exactly one endpoint in common.
% \item \label{orb 1 end}
% $\alpha_1$ and $\alpha_2$ are orbifold arcs with different upper endpoints.
% \item 
% $\alpha_2$ is a long arc with endpoints $p<q$ with no numbered points between its left and right pieces, and $\alpha_1$ is an orbifold arc with upper endpoint~$p$.
% \item 
% $\alpha_2$ is a long arc with endpoints $p<q$ and no numbered points between its left and right pieces, and $\alpha_1$ is an ordinary arc with endpoints $p$ and~$q$.
% \end{enumerate}
% \end{proposition}

\section{Congruence examples}\label{ex sec}
In this section, we demonstrate the use of \cref{orb uncontracted,orb cong bij} by characterizing some important lattice congruences of the weak order of type B.
Throughout this section, it will be convenient to say that a congruence ``contracts an arc'' as shorthand for ``contracts the join-irreducible element corresponding to that arc''.

% \section{Type B}\label{sec:B ex}

\subsection{Parabolic congruences}\label{ex:B parabolic}
We warm up with the simplest example, verifying easy known results.
Given a Coxeter system $(W,S)$, each element of $S$ is in particular a join-irreducible element of the weak order on~$W$.
A \newword{parabolic congruence} of the weak order on a Coxeter group $W$ is a congruence generated by contracting some subset~$J$ of~$S$.
(See \cite[Section~6]{congruence} or \cite[Example~10-7.1]{regions10}.)
The quotient of the weak order on $W$ modulo the parabolic congruence generated by $J$ is isomorphic to the weak order on the parabolic subgroup $W_{S\setminus J}$ \cite[Corollary~6.10]{congruence}.
To understand parabolic congruences on $W$, it is enough to understand the case where $J$ is a singleton.

When $(W,S)$ is of type $B_n$, the elements of $S$ and the associated arcs are as follows:  
The simple reflection $s_0=(-1)2\cdots n$ is the orbifold arc with upper endpoint~$1$, and for $i=1,\ldots n-1$, the simple reflection $s_i=1\cdots(i-1)(i+1)i(i+2)\cdots n$ is the ordinary arc connecting $i$ to $i+1$.

The parabolic congruence generated by contracting $s_i$ contracts all superarcs of corresponding arc.
For $s_0$, the set of these superarcs consists of all orbifold arcs and all long arcs.
Thus the elements of the quotient are the type-$B_n$ noncrossing arc diagrams on $n$ points that only have ordinary arcs.
This is in accordance with \cite[Corollary 6.10]{congruence}, which implies in this case that the quotient is isomorphic to~$S_n$.
%For $s_i$ with $i>0$, these superarcs are the ordinary arcs connecting a point $\le i$ to a point $\ge i+1$, the orbifold arcs with upper endpoint $\ge i+1$, and the long arcs such that either endpoint is $\ge i+1$.
For $s_i$ with $i>0$, these superarcs are the ordinary arcs connecting a point at or below $i$ to a point strictly above $i$, the orbifold arcs with upper endpoint strictly above $i$, and the long arcs such that one or both endpoints are strictly above~$i$.
Again, this accords with \cite[Corollary 6.10]{congruence}, which implies that the quotient is isomorphic to $B_i\times S_{n-i}$.

\subsection{Surjective homomorphisms from $B_n$ to $S_{n+1}$}\label{ex:B diagram}
A surprising result of \cite{diagram} is that surjective lattice homomorphisms exist between the weak orders on different finite Coxeter groups \emph{of the same rank}.
%A \newword{diagram homomorphism} from the weak order on $W$ to the weak order on $W'$ is a surjective lattice homomorphism that fixes $S$ pointwise.
Such homomorphisms always restrict to bijections between the corresponding sets of simple reflections.
Thus, we take  $(W,S)$ and $(W',S')$ to be finite Coxeter groups with the same rank $|S|=|S'|$ and fix a bijection $\phi:S\to S'$.
According to \cite[Theorem~1.6]{diagram}, there exists a surjective lattice homomorphism from $W$ to $W'$ restricting to $\phi$ if and only if $m(r,s)\ge m(\phi(r),\phi(s))$ for every $r,s\in S$.  

It is not hard to see that a surjective lattice homomorphism from $W$ to $W'$ restricts, for any $r,s\in S$, to a surjective lattice homomorphism from the lower interval $[1,r\join s]$ to the lower interval $[1,\phi(r)\join\phi(s)]$.
(For a more general statement, see \cite[Proposition~2.6]{diagram}.)
Less obviously a surjective homomorphism is uniquely determined by all of these restrictions \cite[Corollary~1.8]{diagram}.

We consider the case where $W=B_n$ and $W'=S_{n+1}$ (see \cite[Section~6]{diagram})
and fix $\phi$ to be the bijection sending $s_i$ to $s_{i+1}$ for $i=0,\ldots,n-1$.
In this case, $m(r,s)=m(\phi(r),\phi(s))$ for almost every pair $r,s\in S$, and for those pairs the only possibility is that the restriction of the homomorphism to $[1,r\join s]$ is an isomorphism.
The one exception is that $m(s_0,s_1)=4$ and $m(\phi(s_0),\phi(s_1))=m(s_1,s_2)=3$.
The interval $[1,s_0\join s_1]$ is an octagon and the interval $[1,s_1\join s_2]$ is a hexagon.
There are precisely~$4$ surjective lattice homomorphisms from the octagon to the hexagon.  
Specifically, the congruence associated to a surjective homomorphism from the octagon to the hexagon contracts exactly one of the join-irreducible elements $s_0s_1$ or $s_0s_1s_0$ and exactly one of the join-irreducible elements $s_1s_0$ or $s_1s_0s_1$. 
As a consequence, for each $n$, there are precisely $4$ surjective lattice homomorphisms from $B_n$ to $S_{n+1}$ that restrict to $\phi$, each contracting exactly one of $s_0s_1$ or $s_0s_1s_0$ and exactly one of $s_1s_0$ or $s_1s_0s_1$.  

For each surjective homomorphism from $B_n$ to $S_{n+1}$, there is a natural injection from $S_{n+1}$ into $B_n$.
In \cite{TharpThesis} (see also \cite{ReadingTharp}),
these injections for each of the $4$ homomorphisms are characterized in terms of noncrossing arc diagrams of types A and B, and $3$ of the injections are shown to embed the shard intersection order on $S_{n+1}$ as a sublattice of the shard intersection order on $B_n$.
Here, we show how the subarc/superarc tools developed in this paper recover results about these $4$ homomorphisms much more easily than in~\cite{diagram}.

Specifically, we construct, in terms of type-B noncrossing arc diagrams, four congruences, each contracting exactly one of $s_0s_1$ or $s_0s_1s_0$ and exactly one of $s_1s_0$ or $s_1s_0s_1$.  
In three cases, contracting these two join-irreducible elements generates a congruence associated to surjective homomorphisms from $B_n$ to $S_{n+1}$.
The exception is the congruence generated by contracting $s_0s_1s_0$ and $s_1s_0$.  
In this case, to get the congruence associated to a homomorphism from $B_n$ to $S_{n+1}$, we must contract additional join-irreducible elements.
Everything we need to know about these congruences follows from some easy facts about arcs and superarcs:  %, recorded in the following table.
\begin{itemize}
\item
$s_0s_1=2(-1)3\cdots n$ corresponds to the long arc with left endpoint~$1$ and right endpoint $2$, with right piece passing right of~$1$.
Its superarcs are the long arcs whose right piece passes right of $1$.
\item
$s_0s_1s_0=(-2)(-1)3\cdots n$ corresponds to the orbifold arc with upper endpoint $2$, passing right of $1$.
Its superarcs are the orbifold arcs that pass right of $1$ and the long arcs both of whose pieces pass right of $1$.
\item 
$s_1s_0=(-2)13\cdots n$ corresponds to the orbifold arc with upper endpoint $2$, passing left of $1$.
Its superarcs are the orbifold arcs that pass left of $1$ and the long arcs both of whose pieces pass left of $1$.
\item 
$s_1s_0s_1=1(-2)3\cdots n$ corresponds to the long arc with right endpoint $1$ and left endpoint $2$, with left piece passing left of~$1$.
Its superarcs are the long arcs whose left piece passes left of $1$.
\item 
$s_1s_0s_1s_2=13(-2)4\cdots n$ corresponds to the long arc with left endpoint $2$ and right endpoint $3$, with left piece passing left of~$1$ and right piece passing right of $1$ and $2$.
Its superarcs are the long arcs whose left piece passes left of~$1$ and whose right piece passes right of $1$ and $2$.
\item 
$s_2s_1s_0s_1s_2=12(-3)4\cdots n$ corresponds to the long arc with right endpoint $2$ and left endpoint $3$, with right piece passing right of~$1$ and left piece passing left of $1$ and $2$.
Its superarcs are the long arcs whose right piece passes right of~$1$ and whose left piece passes left of $1$ and $2$.
\end{itemize}

These easy facts, along with \cref{orb uncontracted}, lead almost immediately to the proofs of the following three propositions, which are \cite[Proposition~6.3]{diagram}, \cite[Proposition~6.6]{diagram}, and \cite[Proposition~6.9]{diagram}, translated from the language of shards to the language of type-B arcs.
The missing fourth proposition (describing the congruence generated by contracting $s_0s_1$ and $s_1s_0$) is symmetric to the third by swapping left and right, and this symmetry is also exploited in~\cite{diagram}.

\begin{proposition}\label{simion finest}
The congruence generated by contracting $s_0s_1$ and $s_1s_0s_1$ contracts the long arcs and no other arcs.
\end{proposition}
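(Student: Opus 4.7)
The plan is to combine \cref{orb uncontracted} and \cref{arc forcing B orb} with the explicit descriptions of superarcs of $s_0s_1$ and $s_1s_0s_1$ given in the enumeration above. Those ingredients together say that the congruence in question contracts precisely the union of the superarcs of $s_0s_1$ (the long arcs whose right piece passes right of~$1$) and the superarcs of $s_1s_0s_1$ (the long arcs whose left piece passes left of~$1$). In particular every contracted arc is long, so no ordinary or orbifold arc is contracted; this gives the easy half of the proposition.

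For the harder half, I will show that every long arc belongs to one of the two families above. Let $\alpha$ be a long arc with left endpoint~$p$ and right endpoint~$q$, and let $-d$ and $+d$ (with $d>0$) denote the horizontal positions of the two endpoints of its semicircle point on the horizontal line. Because each of the two pieces of $\alpha$ is monotone in height, their horizontal positions define continuous functions $L(h)$ and $R(h)$ on the common domain $h\in[0,\min(p,q)]$, with $L(0)=-d<d=R(0)$.

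The key topological fact is that the two pieces do not intersect, so $L(h)\neq R(h)$ on the interior of the common domain. Suppose first that $1$ is not an endpoint of $\alpha$. Then $L(1)$ and $R(1)$ are defined and nonzero (no piece may pass through a non-endpoint numbered point), and one cannot simultaneously have $L(1)>0$ and $R(1)<0$: if so, then $L-R$ would change sign from $-2d<0$ at $h=0$ to positive at $h=1$, forcing an intersection by the intermediate value theorem. Hence $L(1)<0$ (left piece passes left of~$1$) or $R(1)>0$ (right piece passes right of~$1$), placing $\alpha$ in one of the two families.

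Finally, suppose $1$ is an endpoint of $\alpha$; by left-right symmetry, assume $1$ is the left endpoint, so that $L(1)=0$ and $R(1)\neq 0$. Applying the intermediate value theorem to $L-R$ on $[0,1]$ once more rules out $R(1)<0$, so $R(1)>0$ and $\alpha$ is a superarc of $s_0s_1$. The main obstacle of the proof is to verify that this intermediate value argument handles every edge case cleanly; once one checks that the horizontal positions really are continuous functions of height (which follows from the strict monotonicity of the pieces) and that the two endpoints of the semicircle point sit on opposite sides of the vertical line, the rest is a short topological observation.
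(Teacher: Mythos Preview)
Your proof is correct and follows the same approach as the paper: both identify the contracted arcs as the union of the superarcs of $s_0s_1$ and of $s_1s_0s_1$, then observe this union is exactly the set of all long arcs. The paper compresses the second step into a single phrase (``or in other words, all long arcs''), whereas you spell out the topological reason via an intermediate value argument on the horizontal positions $L(h)$ and $R(h)$; this added detail is a legitimate elaboration rather than a different method.
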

\begin{proof}
This congruence contracts precisely the arcs that are long, with right piece passing right of $1$ or left piece passing left of $1$, or in other words, all long arcs.
\end{proof}

\begin{proposition}\label{nonhom finest}
The congruence generated by contracting $s_0s_1s_0$, $s_1s_0$, $s_1s_0s_1s_2$, and $s_2s_1s_0s_1s_2$ contracts an arc if and only if it is not ordinary and only has endpoints $\ge2$.
\end{proposition}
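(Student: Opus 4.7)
The plan is to apply \cref{orb uncontracted} and \cref{arc forcing B orb}: the contracted arcs are precisely the superarcs (under the subarc order) of the four generator arcs, so it suffices to show that this union of superarcs coincides with the set of non-ordinary arcs all of whose numbered endpoints are $\ge 2$.

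One direction is immediate from the bullet-list descriptions of superarcs preceding the proposition. The four generators are two orbifold arcs with upper endpoint $2$ and two long arcs with endpoints in $\set{2,3}$. Since superarcs are obtained by pushing endpoints and extending upward, every superarc appearing in the four bulleted lists is non-ordinary and has every numbered endpoint $\ge 2$.

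For the converse, let $\alpha$ be a non-ordinary arc whose numbered endpoints are all $\ge 2$. If $\alpha$ is orbifold with upper endpoint $p \ge 2$, then $\alpha$ passes either left or right of the numbered point $1$, exhibiting it as a superarc of $s_1s_0$ or $s_0s_1s_0$ respectively. If $\alpha$ is long with endpoints $p,q \ge 2$, consider the positions of its two pieces at height $y = 1$. The non-crossing requirement in the definition of a long arc (the left piece ending at the left semicircle and the right piece at the right) forces the $x$-coordinate of the left piece at $y=1$ to be at most that of the right piece, which rules out the combination ``left piece right of $1$, right piece left of $1$''. The three remaining combinations give, respectively: both pieces left of $1$ (so $s_1s_0$ is an orbifold subarc of $\alpha$), both pieces right of $1$ (so $s_0s_1s_0$ is), or the overlapping configuration in which the left piece passes left of $1$ and the right piece passes right of $1$.

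The main technical step is the overlapping case. Since $p \ne q$ and both are $\ge 2$, we have $\max(p,q) \ge 3$; assume $q \ge 3$ by the symmetry swapping left and right. Non-crossing at $y = p$ (where the left piece reaches its endpoint $(0,p)$) forces the right piece to pass right of $p$. When $p = 2$, this immediately exhibits $s_1s_0s_1s_2$ as a subarc of $\alpha$, since its left endpoint fits at $(0,2)$ and the required conditions on $L$ and $R$ reduce to known data; when $p \ge 3$, a symmetric non-crossing analysis at $y = 2$ selects one of $s_1s_0s_1s_2$ or $s_2s_1s_0s_1s_2$ as the subarc. The hardest part is confirming the additional endpoint conditions in \cref{subs of long}, which amounts to verifying that the non-crossing geometry of long arcs is compatible with slotting the generator's endpoints into $\alpha$ without introducing a crossing.
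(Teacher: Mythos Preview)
Your approach is the same as the paper's: identify the contracted arcs as the union of the superarcs of the four generators, then argue that this union is exactly the non-ordinary arcs with all numbered endpoints $\ge 2$.  The paper's own proof is in fact considerably terser than yours.  It simply reads off, from the bullet list preceding the proposition, that a long arc is contracted precisely when both pieces are right of~$1$, both are left of~$1$, or the left piece is left of~$1$ and the right piece right of~$1$, and observes that this covers every long arc with endpoints $\ge 2$.  The paper leaves entirely implicit the non-crossing argument (that ``left piece right of~$1$, right piece left of~$1$'' is geometrically impossible), which you make explicit.

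Where your write-up falls short is in the final paragraph.  You correctly identify that the overlapping case (left piece left of~$1$, right piece right of~$1$) is the substantive one and that one of $s_1s_0s_1s_2$, $s_2s_1s_0s_1s_2$ must be a subarc, but the phrase ``a symmetric non-crossing analysis at $y=2$ selects one of~$\dots$'' is a sketch, not an argument.  The clean way to finish is the same non-crossing principle you already used at $y=1$: since $x_L(2)<x_R(2)$, it is impossible to have simultaneously $2\in L(\alpha)$ (left piece right of~$2$) and $2\in R(\alpha)$ (right piece left of~$2$); hence $2\notin L(\alpha)$ or $2\notin R(\alpha)$, and in either case one of the two long generators satisfies the required $L$/$R$ restrictions.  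Your last sentence then flags the additional endpoint conditions of \cref{subs of long} as ``the hardest part'' but does not verify them.  These conditions are again handled by the non-crossing inequality $x_L<x_R$ at the relevant heights, but as written you have only gestured at this, not carried it out.  The paper's proof simply relies on the bullet list and does not spell this out either, so your level of rigor is comparable; but since you explicitly raise the issue, you should close it rather than leave it as a promissory note.
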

\begin{proof}
This congruence contracts no ordinary arcs. 
It contracts an orbifold arc if and only if it passes left or right of $1$, or in other words, if and only if its upper endpoint is $\ge2$.
It contracts a long arc if and only if both of its pieces pass right of $1$ or both of its pieces pass left of $1$ or its left piece passes left of $1$ and its right piece passes right of $1$.
In other words, it contracts a long arc if and only if its endpoints are both $\ge2$.
\end{proof}

\begin{proposition}\label{delta finest}
The congruence generated by contracting  $s_0s_1s_0$ and $s_1s_0s_1$ contracts an arc if and only if it is an orbifold arc passing right of ~$1$ or a long arc whose left endpoint is not~$1$.
\end{proposition}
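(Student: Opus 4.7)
The plan is to invoke \cref{orb uncontracted}, which translates contraction into the superarc relation: the congruence generated by contracting $s_0s_1s_0$ and $s_1s_0s_1$ contracts precisely those arcs that are superarcs of $s_0s_1s_0$ or of $s_1s_0s_1$. From the bulleted list earlier in this section, the arc of $s_0s_1s_0$ is the orbifold arc with upper endpoint $2$ passing right of $1$, whose superarcs are the orbifold arcs passing right of $1$ together with the long arcs both of whose pieces pass right of $1$. The arc of $s_1s_0s_1$ is the long arc with right endpoint $1$, left endpoint $2$, and left piece passing left of $1$, whose superarcs are the long arcs whose left piece passes left of $1$.

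The rest is a case analysis on the type of the arc $\alpha$. Ordinary arcs cannot contain either generator as a subarc, by \cref{subs of ord/orb,subs of long}, so no ordinary arc is contracted. An orbifold arc with upper endpoint $p'$ is contracted if and only if it is a superarc of $s_0s_1s_0$, which by \cref{subs of ord/orb} is equivalent to $p' \ge 2$ with $1 \in L(\alpha)$; that is exactly the condition that $\alpha$ passes right of $1$. A long arc with left endpoint $p'=1$ is a superarc of neither generator, because both generators have left endpoint $2$. Finally, a long arc with left endpoint $p'\ge 2$ has a left piece defined at height $1$; since the two pieces never cross, point $1$ lies either left of the left piece (so $1\in L(\alpha)$, placing $\alpha$ among the superarcs of $s_0s_1s_0$) or not (so $1\notin L(\alpha)$, placing $\alpha$ among the superarcs of $s_1s_0s_1$).

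The main technical step will be the final dichotomy in the long-arc case, where one must match each long arc with $p'\ge 2$ to one of the two superarc families using \cref{subs of long}. In the orbifold-subarc subcase the conditions reduce to $p'\ge 2$ and $1\in L(\alpha)$. In the long-subarc subcase, one must check additionally that the endpoint conditions $2\notin R(\alpha)$ and $1\notin L(\alpha)$ hold when $\alpha$'s left piece passes left of $1$; these follow from the geometric description of superarcs of a long arc together with the non-crossing constraint on the two pieces. Assembling the three cases recovers exactly the orbifold arcs passing right of $1$ and the long arcs with left endpoint not equal to $1$, as claimed in \cref{delta finest}.
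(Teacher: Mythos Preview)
Your proof is correct and follows essentially the same approach as the paper: invoke the bulleted characterizations of the superarcs of $s_0s_1s_0$ and $s_1s_0s_1$, then run a case analysis on the arc type. The paper's proof is terser (three sentences) but logically identical, including the key dichotomy ``both pieces pass right of $1$'' versus ``left piece passes left of $1$'' for long arcs.

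One caution about your final paragraph. When you verify the long-subarc case against \cref{subs of long} and claim that the endpoint condition $2\notin R(\alpha)$ ``follows from the non-crossing constraint on the two pieces,'' this is not correct as stated. There exist valid long arcs $\alpha$ whose left piece passes left of $1$ and for which $2\in R(\alpha)$: take left endpoint $3$, right endpoint $4$, and let both pieces pass to the left of points $1$ and $2$ (with the right piece passing to the right of $3$). The two pieces can be drawn without crossing, yet $R(\alpha)=\{1,2\}$. The point is that since $p'=2>q'=1$ for the generator $s_1s_0s_1$, the right piece of the would-be subarc only reaches height~$1$, so the reattachment segment at the new left endpoint (height~$2$) cannot cross it regardless of whether $2\in R(\alpha)$; thus the condition $p'\notin R(\alpha)$ is actually not needed here, only $q'=1\notin L(\alpha)$ is. Your main argument (paragraphs one and two), which simply quotes the bulleted superarc families already established in the paper, is unaffected and stands on its own.

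A smaller imprecision: ``both generators have left endpoint $2$'' is not quite right, since $s_0s_1s_0$ is an orbifold arc (upper endpoint $2$). The conclusion you draw is nevertheless correct, since a long superarc of either generator must have both endpoints $\ge 2$.
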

\begin{proof}
This congruence also contracts no ordinary arcs. 
It contracts an orbifold arc if and only if it passes right of $1$.
It contracts a long arc if and only if both of its pieces pass right of $1$ or its left piece passes left of $1$.
In other words, it contracts a long arc if and only if its left endpoint is not $1$.
\end{proof}

\subsection{Cambrian congruences}\label{ex:B cambrian}
Let $(W,S)$ be a finite Coxeter system.
%Let $(W,S)$ be an irreducible finite Coxeter system (a Coxeter system whose Coxeter diagram is connected).  
%(It is well known that the Coxeter diagram of an irreducible finite Coxeter system is a tree.)
A Cambrian congruence on the weak order on $W$ is defined by choosing a set of join-irreducible elements to contract in the following way:
For each edge in its Coxeter diagram, connecting distinct simple reflections $r,s\in S$ and labeled $m(r,s)$, choose either to contract every join-irreducible element $rs,rsr,rsrsr,\ldots$ up to length $m(r,s)-1$ or to contract every join-irreducible element $sr,srs,srsr,\ldots$ up to length $m(r,s)-1$.
(Since most edges of a Coxeter diagram of finite type are labeled $3$, for most edges the choice is between one join-irreducible element and another.)
The \newword{Cambrian congruence} is the congruence generated by contracting the chosen join-irreducible elements.
(Often, these choices are described as the choice of a Coxeter element of $W$ or as the choice of an orientation of the Coxeter diagram, but we do not need to be so specific here.)  
The quotient of the weak order modulo a Cambrian congruence is called a \newword{Cambrian lattice}.

To give context to Cambrian lattices of type B, we review the description of Cambrian lattices of type~A in terms of noncrossing arc diagrams, following \cite[Example~4.9]{arcs}, and add some straightforward observations.
Each numbered point from $2$ to $n-1$ is designated as either a left point or a right point.  
Designating $i$ as a right point corresponds to contracting the join-irreducible element $s_{i-1}s_i$, while designating $i$ as a left point corresponds to contracting $s_is_{i-1}$.
The superarcs of these chosen contracted join-irreducible elements are precisely the arcs that pass left of a left point or right of a right point.
\cref{A uncontracted,A cong bij} imply that $\delta$ restricts to a bijection from the elements of the Cambrian lattice to the noncrossing arc diagrams containing arcs that only pass right of left points and only pass left of right points.

In the cases where there are no right points or no left points, the corresponding Cambrian lattices are \newword{type-B Tamari lattices} in the sense of \cite{cambrian,Thomas}. 
The two type-B Tamari lattices are dual to each other.

A convenient way to draw noncrossing arc diagrams for elements of the Cambrian lattice is to draw a horizontal ``barrier'' ray going right from each right point and going left from each left point.
Elements of the Cambrian lattice are then noncrossing arc diagrams such that every arc is disjoint from the barriers, except at endpoints.
For example, the left picture of \cref{fig:ncp A} shows the noncrossing arc diagram for the permutation $497862153$, which is an element of the Cambrian lattice generated by contracting $s_{i-1}s_i$ for $i\in\set{2,3,5,8}$ and $s_is_{i-1}$ for $i\in\set{4,6,7}$.

\begin{figure}
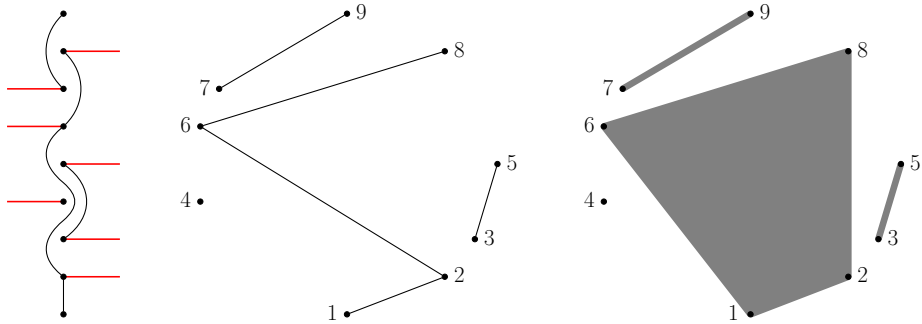

    \centering
\begin{tabular}{ccccc}
\includestandalone[scale=0.5]{figs/a/ncp.straight} 
&\qquad&
\includestandalone[scale=0.5]{figs/a/ncp.circle} 
&\qquad&
\includestandalone[scale=0.5]{figs/a/ncp.blocks} 
\end{tabular}
    \caption{Cambrian lattice and noncrossing partitions, type A}
    \label{fig:ncp A}
\end{figure}

Another way to draw noncrossing arc diagrams for elements of the Cambrian lattice highlights the bijection between the Cambrian lattice and the set of noncrossing partitions of an appropriate cycle.
The numbers $1$ and $n$ are placed at the bottom and top of a circle respectively, and the numbers $2,\ldots,n-1$ are placed on the circle, increasing in height as the numbers increase in value, with left points on the left of the circle and right points on the right of the circle.
The noncrossing arc diagrams for elements of the Cambrian lattice can then be drawn with ``arcs'' that never leave the circle (and may as well be straight line segments).
For example, the right picture of \cref{fig:ncp A} shows the same arc diagram as the left picture, drawn with points on a circle.

The numbered points define a cycle, starting with $1$, passing through the left points in increasing order, then $n$, then the right points in decreasing order.
The bijection between elements of the Cambrian lattice and noncrossing partitions of the cycle is thus clear:
Each connected component of the noncrossing arc diagram contains some set of numbered points, and these sets of numbered points are the blocks of the noncrossing partition. 
Inversely, given a noncrossing partition $\P$ of the cycle, the noncrossing arc diagram corresponding to $\P$ consists of the following arcs:
For each block in $\P$ with elements $b_1<b_2<\cdots<b_k$, there is an arc connecting $b_{i+1}$ to $b_i$ for each $i=1,\ldots,k-1$, with no arc passing left of a left point or right of a right point.

There is another interesting characterization of the Cambrian congruences of type A due to Santocanale and Wehrung~\cite{SanWeh}.
To state this characterization, we recall some facts about the congruence lattice.
The congruences of a fixed finite lattice~$L$ constitute a distributive lattice $\Con(L)$ under refinement order (and in fact a sublattice of the partition lattice).
The join-irreducible congruences are the congruences generated by contracting a single join-irreducible element of the lattice.
The meet-irreducible congruences are also indexed by join-irreducible elements of the lattice:  
A meet-irreducible congruence is the largest (i.e.\ coarsest) congruence not contracting some join-irreducible element.

Although Cambrian congruences are defined as the join of join-irreducible congruences (the congruences generated by each contracted element), remarkably, when~$W$ is of type A, the Cambrian congruences are themselves meet-irreducible congruences.
Specifically, \cite[Corollary 6.10]{SanWeh} says that the Cambrian congruences of the weak order in type A are precisely the \emph{minimal} meet-irreducible congruences of the weak order.
This statement also has a simple proof in terms of noncrossing arc diagrams:
%A minimal meet-irreducible congruence is a meet-irreducible congruence associated to an arc that has no proper superarcs. 
A meet-irreducible congruence is minimal if and only if it is associated to an arc that has no proper superarcs.
After designating the points $2,\ldots,n-1$ as left and right points, there is a unique longest arc $\alpha$ that goes right of every left point and left of every right point.
This arc $\alpha$ has no proper superarcs, and the arcs not contracted by the Cambrian congruence are precisely the subarcs of~$\alpha$.
In other words, the Cambrian congruence is the meet-irreducible congruence associated to~$\alpha$.

In this section, we use \cref{orb uncontracted,orb cong bij} to characterize Cambrian congruences of type B combinatorially and lattice-theoretically.
The lattice-theoretic characterization is related to \cite[Corollary 6.10]{SanWeh} and characterizes Cambrian congruences in type B as meets of meet-irreducible congruences.
%In \cref{B min mi}, we generalize the results of \cite{SanWeh} in a different direction, by characterizing the minimal meet-irreducible congruences of the weak order in type~B as joins of join-irreducible congruences.
%\marginN{\label{mention here}  If we indeed leave out \cref{B min mi} after the first arXiv version (see Note~\ref{leave out}), this would be where we could mention that it's in that arXiv version. ``One can also generalize the results of \cite{SanWeh} in a different direction, by characterizing the minimal meet-irreducible congruences of the weak order in type~B as joins of join-irreducible congruences.  This characterization can be found in the first arXiv version of this paper.''}
%(We do not treat the problem of characterizing the minimal meet-irreducible congruences of the weak order in type~B, although that problem is within reach of the subarc techniques established here.)

As part of determining a Cambrian congruence of the weak order on~$B_n$, we choose to contract either $s_0s_1=2(-1)3\cdots n$ and $s_0s_1s_0=(-2)(-1)3\cdots n$, or $s_1s_0=(-2)13\cdots n$ and $s_1s_0s_1=1(-2)3\cdots n$.
In in both cases, the corresponding arcs are an orbifold arc with upper endpoint $2$ and a long arc with endpoints $1$ and~$2$.
In one case, both arcs pass right of $1$ and an arc is a superarc of one or both of them if and only if some part of it passes right of~$1$.
In the other case both arcs pass left of $1$, and the superarcs are those that pass left of~$1$.

The remaining choices that determine a Cambrian congruence are, for each ${i=2,\ldots n-1}$, to contract $s_{i-1}s_i=1\cdots(i-2)\,i\,(i+1)(i-1)(i+2)\cdots n$ or to contract $s_is_{i-1}=1\cdots(i-2)(i+1)(i-1)\,i\,(i+2)\cdots n$.
The arc corresponding to $s_{i-1}s_i$ connects $i+1$ to $i-1$ and passes right of~$i$.
Its superarcs are precisely the arcs that have some part passing right of $i$.
The arc corresponding to $s_is_{i-1}$ connects $i+1$ to $i-1$ and passes left of~$i$, and its superarcs are the arcs that pass left of $i$.

We see that choosing a Cambrian congruence on the weak order on $B_n$ corresponds to designating each point from $1$ to $n-1$ as a left point or right point.
By \cref{orb uncontracted}, the Cambrian congruence contracts all arcs that pass right of a right point and/or left of a left point.
Thus \cref{orb cong bij} implies the following theorem.

\begin{theorem}\label{B Camb thm}
Given a Cambrian congruence and the corresponding designation of $1,\ldots,n-1$ as left and right points, the map~$\delta^\circ$ on signed permutations restricts to a bijection from the associated Cambrian lattice to the set of type-B noncrossing arc diagrams consisting of arcs that never pass left of a left point and never pass right of a right point.
\end{theorem}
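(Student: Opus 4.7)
The proof is essentially a single invocation of \cref{orb cong bij}; most of the substantive work has been carried out already in the paragraph immediately preceding the theorem. My plan is to organize that material into a clean argument.

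First, I would fix notation: let $\Theta$ be the Cambrian congruence associated with a given designation of $1,\ldots,n-1$ as left or right points, and let $U$ denote the set of type-B arcs that never pass left of a left point and never pass right of a right point. The goal is to show that $U$ is exactly the set of arcs corresponding to join-irreducibles of $B_n$ that are \emph{not} contracted by $\Theta$; once that is established, \cref{orb cong bij} will immediately deliver the stated bijection.

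Next, I would apply \cref{arc forcing B orb}: a join-irreducible element of $B_n$ is contracted by $\Theta$ if and only if its arc is a superarc of one of the generating arcs of~$\Theta$. The generating arcs are exactly those listed in the paragraph preceding the theorem. For each $i\in\{2,\ldots,n-1\}$, designating $i$ as a right (resp.\ left) point contributes the single generator $s_{i-1}s_i$ (resp.\ $s_is_{i-1}$), which is the ordinary arc from $i-1$ to $i+1$ passing right (resp.\ left) of $i$; its superarcs, as recorded there, are precisely the type-B arcs that have some part passing on that side of~$i$. For $i=1$, the choice contributes two generators: one orbifold arc with upper endpoint~$2$ and one long arc with endpoints $1$ and $2$, both lying on the designated side of $1$. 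Using the explicit descriptions of superarcs in \cref{ex:B diagram}, the union of their superarcs is exactly the set of arcs (necessarily orbifold or long) that pass on the designated side of~$1$. Taking the union over all $i$, the set of contracted arcs is exactly the complement of $U$.

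Finally, with this identification of $U$ as the set of arcs corresponding to uncontracted join-irreducibles of $\Theta$, \cref{orb cong bij} immediately gives that $\delta^\circ$ restricts to a bijection from $B_n/\Theta$ to the set of type-B noncrossing arc diagrams using only arcs in~$U$, which is exactly the conclusion of the theorem. The only step requiring bookkeeping is the $i=1$ case, where two generators of different types must be combined; the key observation is that no ordinary arc can pass to either side of~$1$ (since $1$ is the lowest numbered point), so only orbifold and long arcs can appear among the superarcs contributed by the two generators at~$1$, and between them they do cover every such arc passing on the designated side. Everything else is a direct reading of the superarc definitions.
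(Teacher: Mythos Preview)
Your proposal is correct and follows essentially the same approach as the paper. The paper's proof is just the two sentences preceding the theorem statement (``By \cref{orb uncontracted}, the Cambrian congruence contracts all arcs that pass right of a right point and/or left of a left point. Thus \cref{orb cong bij} implies the following theorem.''), and you have written out an expanded and well-organized version of that same argument; the only cosmetic difference is that you invoke \cref{arc forcing B orb} directly rather than its immediate corollary \cref{orb uncontracted}, and you cite the bulleted list in \cref{ex:B diagram} for the $i=1$ superarc computations rather than the restatement in the paragraph just before the theorem.
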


For example, the left pictures of \cref{fig:ncp B} show the type-B noncrossing arc diagram for two elements of the Cambrian lattice for a particular choice of left and right points.
\begin{figure}
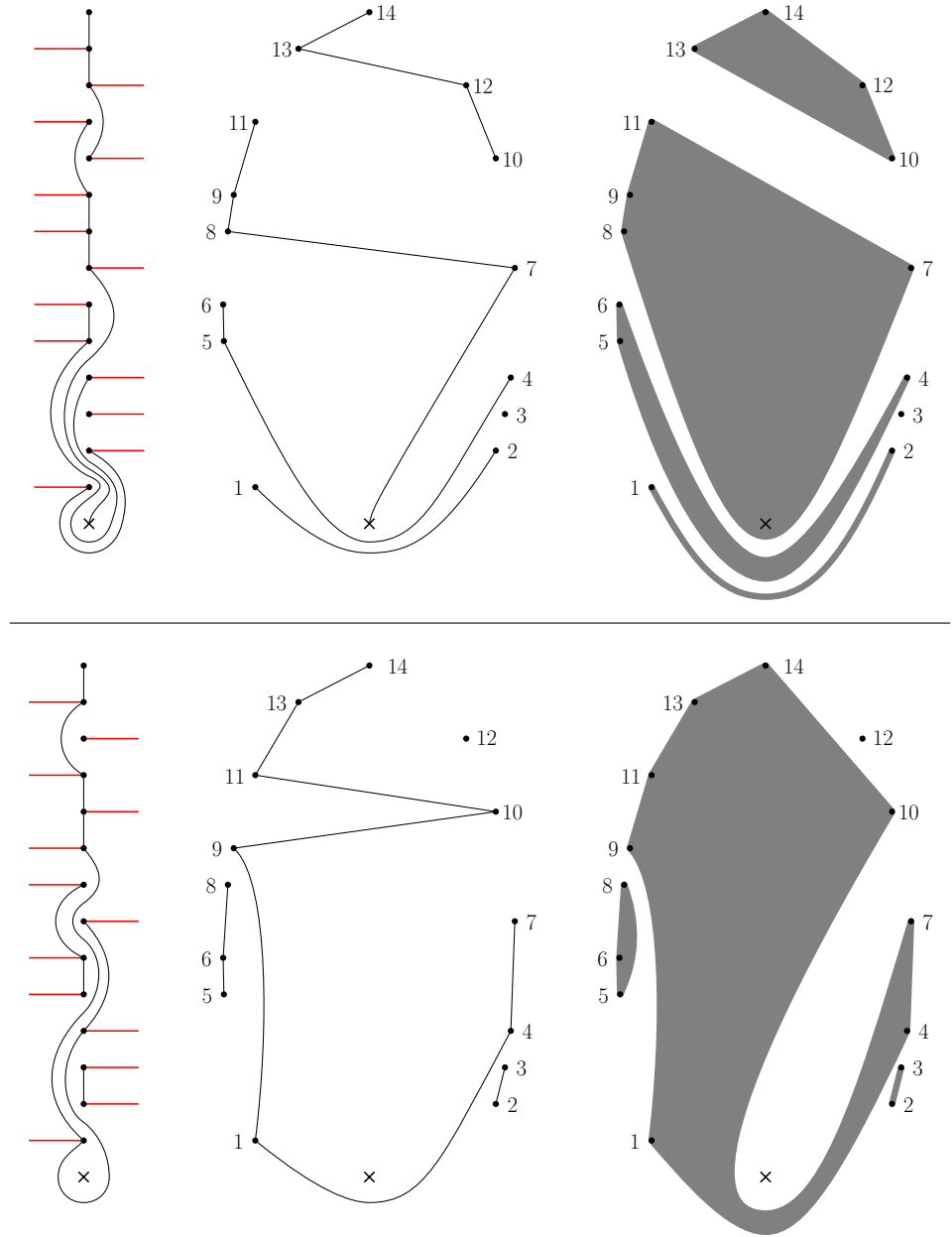

    \centering
\begin{tabular}{ccccc}
\includestandalone[scale=0.49]{figs/b/ncp.straight} 
&\qquad&
\includestandalone[scale=0.49]{figs/b/ncp.circle} 
&\qquad&
\includestandalone[scale=0.49]{figs/b/ncp.blocks} 
\\[5pt]\hline\\
\includestandalone[scale=0.49]{figs/b/ncp2.straight} 
&\qquad&
\includestandalone[scale=0.49]{figs/b/ncp2.circle} 
&\qquad&
\includestandalone[scale=0.49]{figs/b/ncp2.blocks} 
\end{tabular}
    \caption{Cambrian lattice and noncrossing partitions, type B}
    \label{fig:ncp B}
\end{figure}

If one works in the centrally symmetric model (and writes down a centrally symmetric version of \cref{B Camb thm} in the obvious way), one can connect elements of the Cambrian lattice with centrally symmetric noncrossing partitions by imposing a central symmetry conditiont throughout the procedure described above and exemplified in \cref{fig:ncp A}.
Here, we will stay in the orbifold model and use \cref{B Camb thm} to describe the bijection between elements of the Cambrian lattice and noncrossing partitions of a disk with one orbifold point.  
(See the discussion in \cite[Section~4.2]{surfnc} or make the obvious modification of the discussion in \cite[Section~4.2]{affncA} of noncrossing partitions of a disk with two orbifold points.)

Consider a closed disk $D$ with one distinguished point \orb in its interior and numbered points $1,\ldots,n$ on its boundary.
Given a subset of the numbered points, an \newword{embedding} of the subset is the choice of a closed disk $B\subset D$ containing the numbered points in that block but otherwise disjoint from the boundary of $D$.
The disk $B$ may be disjoint from \orb or may contain \orb in its interior.
(The disk $B$ may not contain \orb on its boundary.)  
A \newword{noncrossing partition of the disk with one orbifold point} is a set partition of $\set{1,\ldots,n}$ together with a choice, for each block of the partition, of an embedding, such that the embeddings of any two blocks are disjoint.
These noncrossing partitions are considered up to isotopy, fixing the boundary of $D$ and the point~\orb.
The natural partial order on these noncrossing partitions is the usual type-B noncrossing partition lattice.

The designation of each point in $\set{1,\ldots,n-1}$ as a left point or a right point determines a placement of numbered points on the boundary of $D$ as follows.
The number $n$ is placed at the top of $D$, and the numbers $1,\ldots,n-1$ are placed on the boundary of $D$, increasing in height as the numbers increase in value, with left points on the left and right points on the right, and \orb is placed lower than 1 in the interior of $D$.

The bijection between elements of the Cambrian lattice and noncrossing partitions of the one-orbifold disk is now apparent, and is illustrated in \cref{fig:ncp B}.
Each block of the noncrossing arc diagram contains some set of numbered points, and these sets of numbered points are the blocks of the noncrossing partition.
The embedding of each block is obtained from the noncrossing arc diagram by moving left points to the left, onto the left boundary of $D$ and right points onto the right boundary, moving the arcs correspondingly, and choosing a disc that contains the arcs in the block (or the unique point in the block if it has no arcs).
Inversely, given a noncrossing partition of the one-orbifold disk, we can read off the corresponding type-B noncrossing arc diagram.
Specifically, we read from the noncrossing partition certain pairs of points that should be connected by arcs in the type-B noncrossing arc diagram.
The arcs that connect the pairs must, of course, satisfy the requirement of \cref{B Camb thm}, in that they never pass left of a left point or right of a right point.
If there is a block containing~\orb, then we connect the lowest numbered point in the block to \orb by an orbifold arc and connect each other numbered point in the block to the next lower numbered point in the block by an ordinary arc. 
For each block that passes below \orb, we first treat the block as if it had two pieces, separated at the part of the block that goes below~\orb.
We connect each numbered point within each piece (except the lowest numbered point in the piece) to the next lower numbered point in that piece by an ordinary arc.
We then connect the lowest point in one piece to the lowest point in the other piece by a long arc.
For each block that passes above~\orb, we connect each numbered point in the block, except the lowest, to the next lower numbered point in the block, by an ordinary arc.

\cref{B Camb thm} also lets us recover (much more easily) the characterization of the type-B Cambrian lattices given in \cite{cambrian}:
The Cambrian lattice consists of the signed permutations whose long one-line notation has no patterns $231$ such that the ``$2$'' is a right point or the negative of a left point.
More formally, we have the following theorem, which is \cite[Theorem~7.5]{cambrian}, restated in the language of this section.

\begin{theorem}\label{B Camb pat thm}
Given a designation of $1,\ldots,n-1$ as left and right points, the associated Cambrian lattice is the subposet of the weak order induced by signed permutations whose long one-line notation has no subsequence $bca$ with $a<b<c$ such that $b$ is a right point or the negative of a left point.
\end{theorem}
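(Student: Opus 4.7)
The plan is to combine \cref{B Camb thm}, which identifies the Cambrian lattice with the signed permutations $\pi$ such that no arc of $\delta^\circ(\pi)$ passes left of a left point or right of a right point, with a dictionary between ``passes right/left of a point'' and patterns in the long one-line notation of $\pi$. I would work primarily in the centrally symmetric model of \cref{B sym sec}, where the sign-symmetric picture keeps the combinatorics uniform, and use $\phi$ only at the end to translate back.

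The central lemma is this: for any $X\in\set{\pm1,\ldots,\pm n}$, $\delta(\pi)$ has an arc passing right of the point $X$ if and only if the long one-line notation of $\pi$ contains a subsequence $Xca$ with $a<X<c$. The forward direction is immediate from the description of $\delta$ recalled in \cref{A sec}: an arc passing right of $X$ arises from a descent $\pi_i>\pi_{i+1}$ with $\pi_{i+1}<X<\pi_i$ and $\pi_j=X$ for some $j<i$, so $c=\pi_i$ and $a=\pi_{i+1}$ work. For the converse, given a subsequence $Xca$ at positions $i_X<i_c<i_a$, I would choose the largest index $k$ with $i_c\le k<i_a$ and $\pi_k>X$, which exists since $\pi_{i_c}=c>X$. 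By maximality and distinctness of entries, $\pi_{k+1}<X$, so the descent $\pi_k>\pi_{k+1}$ produces an arc passing right of $X$, because $i_X<i_c\le k$.

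Next I would translate to the orbifold setting. Under $\phi$, both $+b$ and $-b$ map to the orbifold point $b$, but the half-turn identification reverses left and right. Hence an orbifold arc passes right of $b$ iff the corresponding centrally symmetric arc passes right of $+b$ or left of $-b$; by the central symmetry of $\delta(\pi)$ these two events coincide, and both reduce via the lemma to the condition that the long one-line notation has a $bca$ subsequence with $a<b<c$. Similarly, an orbifold arc passes left of $b$ iff the long one-line has a $cab$ subsequence with $a<b<c$; applying the involution $\pi_{-i}=-\pi_i$, which sends a $cab$ pattern at positions $i<j<k$ to a $(-b)(-a)(-c)$ pattern at positions $-k<-j<-i$, this becomes a $b'c'a'$ pattern with $b'=-b$ and $a'<b'<c'$.

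Combining with \cref{B Camb thm}: $\pi$ fails to lie in the Cambrian lattice iff $\delta^\circ(\pi)$ has an arc passing right of some right point $b$ or left of some left point $b$, iff the long one-line of $\pi$ has a $bca$ subsequence with $a<b<c$ where $b$ is a right point, or (after the involution above) a $b'c'a'$ pattern where $b'$ is the negative of a left point. This is exactly the stated condition. The main subtlety, where I would take most care, is the translation of passing-sides under $\phi$ together with the central symmetry; this is precisely the place where the seemingly asymmetric ``negative of a left point'' clause in the statement naturally emerges.
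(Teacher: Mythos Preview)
Your proposal is correct and is precisely what the paper intends: the paper does not spell out a proof of \cref{B Camb pat thm} but merely says that \cref{B Camb thm} recovers the characterization of~\cite{cambrian} ``much more easily,'' and your type-A lemma (an arc of $\delta(\pi)$ passes right of $X$ iff the one-line notation contains a subsequence $Xca$ with $a<X<c$) together with the passage through the centrally symmetric model and the involution $\pi_{-i}=-\pi_i$ is exactly the computation that fills this in. One small wording slip: where you write ``orbifold arc'' you mean an arbitrary arc in the orbifold model, not specifically an arc with an endpoint at~\orb.
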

By the symmetry of the long one-line notation, the criterion in \cref{B Camb pat thm} is equivalent to requiring that the long one-line notation has no pattern $312$ such that the ``$2$'' is a left point or the negative of a right point.

To write the Cambrian congruence as a meet of meet-irreducible congruences, we need to find the arcs that are not contracted by the Cambrian congruence but all of their proper superarcs are contracted. 
In every case, one such arc is the unique orbifold arc $\beta_o$ with upper endpoint $n$ that passes left of every right point and right of every left point.
Besides this orbifold arc, there are one or two additional uncontracted arcs that have no uncontracted proper superarcs.
Each is a long arc with nothing between its left and right pieces, passing left of every right point and right of every left point.
One arc, $\beta_r$ has left endpoint at $n$ and right endpoint at the highest right point and the other, $\beta_l$, has right endpoint at $n$ and left endpoint at the highest left point.
If there are no right points or no left points (the case where the Cambrian lattice is a type-B Tamari lattice), then only one of the arcs exists $\beta_r$ or $\beta_l$ exists.  
\cref{fig:meetand B} shows $\beta_o$, $\beta_r$, and $\beta_l$ for a particular choice of right and left points, in the case $n=6$.
\begin{figure}
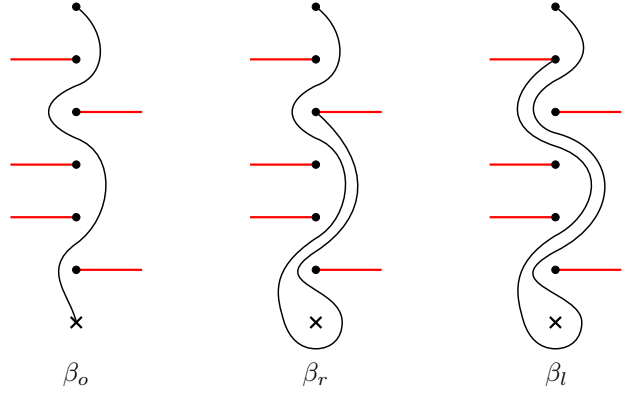

    \centering
\begin{tabular}{ccccccccc}
\includestandalone[scale=0.7]{figs/b/meetand.o} 
&&&&
\includestandalone[scale=0.7]{figs/b/meetand.r} 
&&&&
\includestandalone[scale=0.7]{figs/b/meetand.l} \\
$\beta_o$&&&&$\beta_r$&&&&$\beta_l$
\end{tabular}
    \caption{A meet representation of the Cambrian congruence}
    \label{fig:meetand B}
\end{figure}

Let $\Theta_o$ be the largest (i.e.\ coarsest) congruence that does not contract $\beta_o$.
Define $\Theta_r$ and/or $\Theta_l$ similarly (if $\beta_r$ and/or $\beta_l$ exist).
We have proven the following theorem.  

\begin{theorem}\label{Cambrian meet}
Let $\Theta$ be the Cambrian congruence on $B_n$ associated to a designation of $1,\ldots,n-1$ as left and right points.
Then $\Theta$ is the meet, in the lattice of congruences, of the meet-irreducible congruences $\Theta_o$, $\Theta_r$, and $\Theta_l$ (but leaving out~$\Theta_r$ if there are no right points or leaving out $\Theta_l$ if there are no left points).
\end{theorem}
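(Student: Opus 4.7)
The plan is to use a general lattice-theoretic principle together with an explicit identification of the maximal uncontracted arcs of $\Theta$. By \cref{orb uncontracted}, each congruence on $B_n$ is determined by its set of uncontracted join-irreducibles, which is closed under subarcs. Under the meet in the congruence lattice (corresponding to intersection of equivalence relations), an arc is uncontracted by the meet if and only if it is uncontracted by at least one constituent. Since $\Theta_o$, $\Theta_r$, $\Theta_l$ are the coarsest congruences not contracting $\beta_o$, $\beta_r$, $\beta_l$ respectively, and since for each of these the uncontracted set must be the smallest subarc-closed set containing the given arc, the uncontracted sets of $\Theta_o$, $\Theta_r$, $\Theta_l$ are exactly the sets of subarcs of $\beta_o$, $\beta_r$, $\beta_l$.

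It therefore suffices to prove that an arc is uncontracted by $\Theta$ if and only if it is a subarc of one of $\beta_o$, $\beta_r$, $\beta_l$ (omitting whichever of $\beta_r$ and $\beta_l$ does not exist). The reverse direction is immediate: each $\beta$ passes right of every left point and left of every right point by construction, so each is uncontracted by $\Theta$, and \cref{arc forcing B orb} guarantees that every subarc of an uncontracted arc is itself uncontracted.

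For the forward direction I would argue by case analysis on arc type. An uncontracted ordinary or orbifold arc $\alpha$ is a subarc of $\beta_o$ by direct inspection of \cref{subs of ord/orb}: because $\beta_o$ reaches all the way to $n$ and has its $L$ and $R$ sets equal to the full collections of left and right points below $n$, the required equalities $L(\alpha)=L(\beta_o)\cap(0,p')$ and $R(\alpha)=R(\beta_o)\cap(0,q')$ follow directly from the uncontractedness of $\alpha$ at each interior numbered point. For an uncontracted long arc $\alpha$, the pieces are forced to pass on the designated correct side of every interior numbered point; this pins down $L(\alpha)$ and $R(\alpha)$, and the same uncontractedness analysis at the endpoints shows that the left endpoint cannot exceed the highest left point (and similarly for the right, on the appropriate side), so that $\alpha$ fits as a subarc of either $\beta_r$ or $\beta_l$, depending on which endpoint is higher and hence on the wrapping orientation around $\orb$. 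The main obstacle will be the long-arc case: the subarc conditions of \cref{subs of long} involve the additional constraints $p'\notin R(\alpha)$ and $q'\notin L(\alpha)$, whose verification in every configuration requires carefully tracking the uncontracted pass-side data of $\alpha$ near its endpoints against the corresponding data of $\beta_r$ or $\beta_l$.
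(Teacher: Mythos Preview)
Your approach is essentially the paper's: the paragraph preceding the theorem asserts that $\beta_o$, $\beta_r$, $\beta_l$ are precisely the uncontracted arcs whose proper superarcs are all contracted, and the theorem is then immediate.  Your plan to show directly that every uncontracted arc is a subarc of one of these three is the same statement unpacked, and your treatment of ordinary and orbifold arcs (as subarcs of~$\beta_o$) is correct.

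The long-arc argument, however, has a gap.  The claim ``the left endpoint cannot exceed the highest left point'' does not follow from an uncontractedness analysis at the endpoints---indeed $\beta_r$ itself has left endpoint~$n$.  The mechanism you need is the \emph{validity} of the long arc: its left and right pieces may not cross.  If the left endpoint $p$ is the lower one (so $p<q$), then the right piece passes by~$p$ and must lie to its right, forcing $p\notin R(\alpha)$; combined with the uncontractedness computation $R(\alpha)=\{\text{right points in }(0,q)\}$ this forces $p$ to be a left point, whence $p\le l^*$.  One then checks that $\alpha$ is a subarc of~$\beta_l$ (and the condition $p\notin R(\beta_l)$ from \cref{subs of long}, which you flag as the obstacle, is exactly the statement that $p$ is a left point).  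Symmetrically, if $q<p$ then validity forces $q$ to be a right point, $q\le r^*$, and $\alpha$ is a subarc of~$\beta_r$.  So the case split really is governed by which endpoint is lower, but the constraint on that endpoint comes from validity of the arc together with uncontractedness at interior points, not from any condition at the endpoint itself.
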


\subsection{Bipartite biCambrian congruences}\label{ex:B bip bicambrian}  
Recall from \cref{ex:B cambrian} that a Cambrian congruence on the weak order on $W$ is defined by a binary choice for each edge in the Coxeter diagram.
Two Cambrian congruences are \newword{opposite} if they arise from making the opposite choice for each edge.
A \newword{biCambrian congruence} is the meet, in the congruence lattice of the weak order, of two opposite Cambrian congruences.
(The meet of two congruences contracts an element if and only if both congruences contract that element.)
More details on biCambrian congruences can be found in \cite[Section~2.3]{bicat}.

A Cambrian congruence is \newword{bipartite} if, whenever $r$ and $s$ form an edge and~$s$ and~$t$ also form an edge, the congruence either contracts the join-irreducible elements $rs,rsr,rsrsr,\ldots$ and $ts,tst,tstst,\ldots$ or contracts the join-irreducible elements $sr,srs,srsr,\ldots$ and $st,sts,stst,\ldots$.
Assuming $W$ is \emph{irreducible}, there are precisely two biparitite Cambrian congruences, and they are opposite each other.
The \newword{bipartite biCambrian congruence} is the biCambrian congruence that is the meet of these two bipartite Cambrian congruences.

We briefly review the combinatorics of bipartite biCambrian congruences in type~A.
(More details are in \cite[Section~3.3]{bicat}.)
One bipartite Cambrian congruence corresponds to setting all even numbers in $2,\ldots,n-1$ to be left points and all odd numbers to be right points.
The other is the same, but with left and right switched.
Thus the bipartite biCambrian congruence contracts an arc if and only if it goes right of both an even number and an odd number or goes left of both an even number and an odd number.
The uncontracted arcs are the \newword{alternating arcs}, which don't pass two consecutive points on the same side (and thus alternate left-right or right-left).
Every arc with endpoints differing by less than 3 is alternating, because it doesn't pass two consecutive points at all.
Thus the smallest non-alternating arcs, in the sense of subarcs, are the arcs with endpoints $i$ and $i+3$, passing to the same side of $i+1$ and $i+2$, for $i=1,\ldots,n-3$.
The bipartite biCambrian congruence is thus generated by contracting the join-irreducible elements $s_is_{i+1}s_{i+2}$ and $s_{i+2}s_{i+1}s_i$ corresponding to these arcs.
The permutations not contracted by the bipartite biCambrian congruence are described by a bivincular pattern avoidance condition in the sense of~\cite[Section~2]{BCDK}.
(See \cite[Proposition~3.7]{bicat}.)

We now turn to type~B.
The bipartite biCambrian congruence on signed permutations is described in \cite[Section~3.5]{bicat} in terms of symmetric alternating arc diagrams.
Here, we give an account in the orbifold model and use the type-B subarc relation to add a new description of the generators of the congruence.

Similarly to type A, one bipartite Cambrian congruence of type B corresponds to designating all even numbers in $1,\ldots,n-1$ to be left points and all odd numbers to be right points.
The other is the same with right and left reversed.  
Thus the uncontracted arcs of the bipartite biCambrian congruence of type B are the \newword{alternating type-B arcs}, meaning:
\begin{itemize}
\item ordinary arcs that don't pass consecutive points on the same side,
\item orbifold arcs that don't pass consecutive points on the same side, and
\item long arcs with no numbered points between them, with left piece and right piece each not passing pass consecutive points on the same side.
\end{itemize}

The following theorem is easily verified by checking that the type-B arcs listed are the minimal non-alternating type-B arcs, in the sense of subarcs.  

\begin{theorem}\label{B bicat gen bip}
The type-B bipartite biCambrian congruence is generated by contracting the following join-irreducible elements.
\begin{itemize}
\item for each $i=1,\ldots,n-3$, the join-irreducible elements $s_is_{i+1}s_{i+2}$ and $s_{i+2}s_{i+1}s_i$ corresponding to ordinary arcs with endpoints $i$ and $i+3$ passing to the same side of $i+1$ and $i+2$,
\item the join-irreducible elements $s_0s_1s_0s_2s_1s_0$ 
% -123
% 2-13
% -2-13
% -23-1
% 3-2-1
% -3-2-1
and $s_2s_1s_0$ corresponding to orbifold arcs with upper endpoint $3$ passing to the same side of $1$ and $2$, and 
\item the join-irreducible elements 
$s_0s_1s_2$ 
% -123
% 2-13
% 23-1
and $s_2s_1s_0s_1$ 
% 132
% 312
% -312
% 1-32
corresponding to long arcs with endpoints at $1$ and $3$ passing to the same side of $1$ and $2$.
\end{itemize}
\end{theorem}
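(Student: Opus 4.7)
The plan is to leverage \cref{orb uncontracted} together with the identification, made in the paragraph preceding the theorem, that the uncontracted join-irreducibles of the type-B bipartite biCambrian congruence correspond exactly to the alternating type-B arcs. By \cref{arc forcing B orb}, the congruence generated by contracting a set of join-irreducibles contracts precisely the join-irreducibles whose arcs have some generator as a subarc; consequently it suffices to check that the arcs listed in the theorem form a complete list of minimal non-alternating type-B arcs under the subarc relation. Verifying that each listed arc is non-alternating is straightforward from the prescribed same-side passing (or, in the long case, from the fact that the non-endpoint~$2$ is enclosed between the two pieces). Verifying minimality---that every proper subarc of a listed arc is alternating---amounts to checking that proper subarcs of the ordinary generators pass at most one point, proper subarcs of the orbifold generators are themselves orbifold with upper endpoint~$1$ or~$2$ or ordinary with endpoints in $\set{1,2,3}$, and proper subarcs of the long generators fall into these already-handled types.

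The main step is to show that every non-alternating type-B arc has one of the listed arcs as a subarc. The ordinary case is immediate from \cref{subs of ord/orb}: if $\alpha$ passes consecutive points $i+1,i+2$ on the same side, then its endpoints satisfy $p\le i$ and $q\ge i+3$, and the subarc with endpoints $i,i+3$ is listed. The orbifold case splits: if the orbifold arc passes $i+1,i+2$ on the same side with $i\ge 1$, an ordinary subarc with endpoints $i,i+3$ exists by \cref{subs of ord/orb}, reducing to the first case; if it passes $1$ and $2$ on the same side, the orbifold subarc with upper endpoint~$3$ is listed. The long case requires the most effort, since non-alternation for a long arc has two possible sources. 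If a piece passes consecutive points $i+1,i+2$ on the same side, \cref{subs of long} extracts an ordinary subarc (for $i\ge 1$) or a long subarc with endpoints $1$ and $3$ (for the pair $1,2$). If instead a numbered point lies between the two pieces, one shrinks both endpoints of the long arc, respecting the additional conditions $p'\not\in R(\alpha)$ and $q'\not\in L(\alpha)$ of \cref{subs of long}, to produce the listed long subarc with endpoints $1$ and $3$ (and $L(\alpha')=R(\alpha')=\emptyset$).

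The main obstacle is the long-arc case with a point between the pieces: one must verify that the endpoint shrinking always succeeds in producing a valid long subarc with endpoints $1$ and $3$, or, failing that, an ordinary or orbifold subarc already on the listed generators, taking careful account of the interaction between the additional compatibility conditions of \cref{subs of long} and the positions of $1$, $2$, and the between-point within $L(\alpha)$, $R(\alpha)$, or neither. Once this case analysis is complete, \cref{orb uncontracted} and \cref{arc forcing B orb} together imply that the congruence generated by contracting the listed join-irreducibles contracts exactly the non-alternating arcs, which is the type-B bipartite biCambrian congruence.
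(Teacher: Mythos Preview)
Your approach is exactly the paper's: the authors simply say the theorem ``is easily verified by checking that the type-B arcs listed are the minimal non-alternating type-B arcs, in the sense of subarcs,'' and your proposal expands this into the natural three-part check (listed arcs are non-alternating, their proper subarcs are alternating, every non-alternating arc has a listed subarc) via \cref{orb uncontracted} and \cref{arc forcing B orb}.

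One small correction: your parenthetical explanation of why the listed long arcs are non-alternating is off. The listed long arcs have endpoints $1$ and $3$, so $\min(p,q)=1$ and there are \emph{no} numbered points between the two pieces (the set $B(\alpha)$ of between-points lies in $(0,\min(p,q))=(0,1)=\emptyset$). These arcs fail to be alternating for the same reason as the ordinary and orbifold generators: the piece with endpoint $3$ passes the consecutive points $1$ and $2$ on the same side. Concretely, $s_0s_1s_2$ has right endpoint $3$ with $R(\alpha)=\emptyset$, so its right piece passes right of both $1$ and $2$; while $s_2s_1s_0s_1$ has left endpoint $3$ with $L(\alpha)=\emptyset$, so its left piece passes left of both $1$ and $2$. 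Relatedly, the proper subarcs of these long generators include a long arc with endpoints $1$ and $2$ (not only the ordinary/orbifold types you mention), though it is indeed alternating since its longer piece passes only the single point $1$. With these adjustments your outline goes through and coincides with the paper's argument.
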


\subsection{Linear biCambrian congruences}\label{ex:B lin bicambrian}  
When $W$ is of type A or B (or F or H, which are not considered in this paper), there are two \newword{linear Cambrian congruences}.  
(The corresponding Cambrian lattices are the Tamari lattices and type-B Tamari lattices.
See~\cite{cambrian,Thomas}.)
One of them contracts all join-irreducible elements of the form $s_is_{i+1},s_is_{i+1}s_i,\ldots$, and the other 
contracts all join-irreducible elements of the form $s_{i+1}s_i,s_{i+1}s_is_{i+1},\ldots$.
The \newword{linear biCambrian congruence} is the meet of these two opposite linear Cambrian congruences.

In type A, the linear biCambrian congruence contracts an arc if and only if it goes left of some point and right of some other point.  
It is generated by contracting, for each $i=1,\ldots,n-3$, the join-irreducible elements $s_is_{i+2}s_{i+1}$ and $s_{i+1}s_is_{i+2}s_{i+1}$, associated to arcs with endpoints $i$ and $i+3$, passing to opposite sides of $i+1$ and $i+2$.
The permutations not contracted by the linear biCambrian congruence called the \newword{twisted Baxter permutations}, are defined by a vincular pattern  and are counted by the Baxter number.
(See \cite[Section~10]{con_app} and \cite[Section~8]{rectangle}.)

In type B, the linear biCambrian congruences also contracts an arc if and only if it goes left of some point and right of another.  

\begin{theorem}\label{B bicat gen lin}
The type-B linear biCambrian congruence is generated by contracting the following join-irreducible elements.
\begin{itemize}
\item for each $i=1,\ldots,n-3$, the join-irreducible elements $s_is_{i+2}s_{i+1}$ and $s_{i+1}s_is_{i+2}s_{i+1}$ corresponding to ordinary arcs with endpoints $i$ and $i+3$ passing to opposite sides of $i+1$ and $i+2$,
\item the join-irreducible elements $s_0s_2s_1s_0$ 
% -123
% -132
% 3-12
% -3-12
and $s_1s_2s_0s_1s_0$ 
% -3-21
corresponding to orbifold arcs with upper endpoint $3$ passing to opposite sides of $1$ and $2$, and 
\item the join-irreducible elements 
$s_0s_2s_1$ 
% -123
% -132
% 3-12
and $s_1s_0s_1s_2s_1s_0s_1$ 
%s1: 213
%s0: -213
%s1: 1-23
%s2s1: 31-2
%s0: -31-2
%s1: 1-3-2
corresponding to long arcs with endpoints at $1$ and $3$ passing to opposite sides of $1$ and $2$.
\end{itemize}
\end{theorem}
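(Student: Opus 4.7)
The plan is to combine \cref{orb uncontracted} with the description of Cambrian congruences from Section~\ref{ex:B cambrian}. The linear biCambrian is by definition the meet of the two linear Cambrian congruences, which in the notation of \cref{B Camb thm} correspond to designating every numbered point as a right point, or alternatively as a left point. Under this correspondence the first congruence contracts exactly the arcs passing right of some point, while the second contracts those passing left of some point, so their meet contracts exactly the arcs passing both right of some point and left of some point, as asserted in the preamble to the theorem.

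By \cref{orb uncontracted} and \cref{arc forcing B orb}, the congruence generated by contracting a set of join-irreducibles contracts precisely the superarcs of the corresponding arcs. Hence, to prove the theorem it suffices to verify that the listed arcs are exactly the minimal (in the subarc order) arcs contracted by the linear biCambrian. The overall structure parallels the bipartite biCambrian argument (\cref{B bicat gen bip}), with ``same side'' replaced by ``opposite sides.'' For ordinary arcs, a minimal contracted arc must have exactly one left point and exactly one right point, each adjacent to the other and to the endpoints; this gives the first bullet of arcs with endpoints $i,i+3$ and $i+1,i+2$ on opposite sides. For orbifold arcs, minimality forces upper endpoint $3$ with $\{L(\alpha),R(\alpha)\}=\{\{1\},\{2\}\}$, giving the second bullet.

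The long-arc case is the main obstacle, because \cref{subs of long} for long subarcs is more intricate than the corresponding definitions for ordinary or orbifold subarcs, and one must track $L(\alpha)$, $R(\alpha)$, and $B(\alpha)$ simultaneously under restriction. Combining \cref{subs of long} with \cref{B orb arrows}, I would show that a minimal contracted long arc has endpoints $1$ and $3$, with the unique piece traversing the interior points going to opposite sides of $1$ and $2$. These correspond exactly to $s_0s_2s_1$ (the long arc with $p=1$, $q=3$, $R(\alpha)=\{2\}$) and $s_1s_0s_1s_2s_1s_0s_1$ (the long arc with $p=3$, $q=1$, $L(\alpha)=\{2\}$), matching the third bullet and completing the verification.
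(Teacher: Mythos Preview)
Your approach is exactly the paper's: reduce the claim to checking that the listed arcs are precisely the subarc-minimal arcs that pass left of some point and right of some (possibly different) point, then handle ordinary, orbifold, and long arcs separately. The ordinary and orbifold cases go through as you describe.

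However, your long-arc analysis has a genuine gap, and in fact the theorem as stated (and the paper's one-line verification) appears to be incomplete. Your claim that ``a minimal contracted long arc has endpoints $1$ and $3$'' is false. Consider the long arc $\alpha$ with left endpoint $2$, right endpoint $3$, and $1\in B(\alpha)$ (so $L(\alpha)=R(\alpha)=\emptyset$); this is the arc $\delta^\circ(1,3,-2)$. Since $1\in B(\alpha)$, the left piece passes to the left of $1$ while the right piece passes to the right of $1$ (and also right of $2$), so $\alpha$ is contracted by both linear Cambrian congruences and hence by their meet. But every proper subarc of $\alpha$ passes all points on a single side: by \cref{subs of long} the orbifold subarc with upper endpoint $2$ does not exist (because $1\in B(\alpha)$ makes both $L(\alpha)\cap\{1\}$ and $R(\alpha)\cap\{1\}$ empty), all ordinary subarcs have $R(\alpha')=\emptyset$ or $L(\alpha')=\emptyset$, and the proper long subarcs (left $1$, right $2$ or $3$; left $2$, right $1$) likewise pass everything on one side. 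So $\alpha$ is a minimal contracted arc. Yet $\alpha$ is not a superarc of any arc on the list: the listed long arcs have $R=\{2\}$ or $L=\{2\}$, incompatible with $R(\alpha)=L(\alpha)=\emptyset$ under \cref{subs of long}, and the listed orbifold arcs have upper endpoint $3>\min(2,3)$. The mirror arc (left $3$, right $2$, $1\in B$) is a second missing generator. The underlying point you overlooked is that a long arc with $B(\alpha)\neq\emptyset$ automatically passes both left and right of a point, a mechanism not visible from the ``opposite sides of $1$ and $2$'' template.
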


The theorem is easily verified by checking that the arcs listed are minimal, in the sense of subarcs, among arcs that pass left of some point and right of some other point.

\subsection{Symmetric congruences of the weak order on $S_{2n}$}  
The weak order on signed permutations is a sublattice of the weak order on all permutations of $\set{\pm1,\ldots,\pm n}$. 
(In light of \cref{w0 arc} and as discussed in \cref{B sym sec}, this sublattice consists of all permutations whose corresponding noncrossing arc diagrams are centrally symmetric.)

Given a lattice and a sublattice, any congruence on the lattice restricts to a congruence on the sublattice.
However, there may exist congruences on the sublattice that are not the restriction of congruences on the larger lattice.
(See \cite[Remark~119]{shardpoly}.)  
We write $\Con_A(B_n)$ for the set of congruences on the weak order on $B_n$ that are the restriction of congruences on $S_{2n}$.
The purpose of this section is to characterize $\Con_A(B_n)$.

Recall that conjugation by $w_0$ is an involutive automorphism of the weak order on $S_{2n}$, and when $S_{2n}$ is represented as permutations of $\set{\pm1,\ldots,\pm n}$, this map is 
\[\pi_{-n}\cdots\pi_{-2}\pi_{-1}\pi_1\pi_2\cdots\pi_n\mapsto (-\pi_n)\cdots(-\pi_2)(-\pi_1)(-\pi_{-1})(-\pi_{-2})\cdots(-\pi_{-n}).\] 
A \newword{symmetric congruence} of $S_{2n}$ is a congruence such that $\pi\equiv\tau$ if and only if $w_0\pi w_0\equiv w_0\tau w_0$ for all $\pi$ and $\tau$.

\begin{definition}\label{loose sub def}
A type-B arc $\alpha'$ is a \newword{loose subarc} of a type-B arc $\alpha$ if and only if one of the following conditions holds:
\begin{itemize}
\item 
$\alpha'$ is a subarc of $\alpha$ in the sense of \cref{subs of ord/orb} or \cref{subs of long};
\item 
$\alpha$ is an orbifold arc with endpoint $q$ and $\alpha'$ is a long arc with left endpoint $p'$ and right endpoint $q'$ with such that $L(\alpha')=L(\alpha)\cap(0,p')$ and $R(\alpha')=R(\alpha)\cap(0,q')$.
(In particular, there are no numbered points between the two sides of $\alpha'$.)
\item
$\alpha$ is a long arc with left endpoint $p$ and right endpoint $q$ and $\alpha'$ is a long arc with left endpoint $p'$ and right endpoint $q'$ with $p'\le q$ and $q'\le p$, $L(\alpha')=(0,p')\setminus R(\alpha)$, $R(\alpha')=(0,q')\setminus L(\alpha)$, and there are no numbered points that are between the two pieces of $\alpha'$.
\end{itemize}
\end{definition}

We emphasize the comparison between \cref{loose sub def} and \cref{subs of ord/orb,subs of long}.
First, \cref{loose sub def} allows long loose subarcs of orbifold arcs, while \cref{subs of ord/orb} disallows long subarcs of orbifold arcs.
Second, \cref{loose sub def} allows, loosely speaking, the \emph{left} piece of $\alpha'$ to be obtained by cutting the \emph{right} piece of $\alpha$ and the \emph{right} piece of  $\alpha'$ to be obtained by cutting the \emph{left} piece of $\alpha$ as long as there are no numbered points between the pieces of $\alpha'$.
(The definition of loose subarcs is equivalent to removing the additional requirement in \cref{subs of pair} for subarc pairs of a symmetric pair as long as there are no numbered points between the two arcs in the subarc pair.)
\cref{fig:loose} shows some pairs $\alpha'$ and $\alpha$ of type-B arcs such that $\alpha'$ is a loose subarc of $\alpha$ but not a subarc of $\alpha$.

\begin{figure}
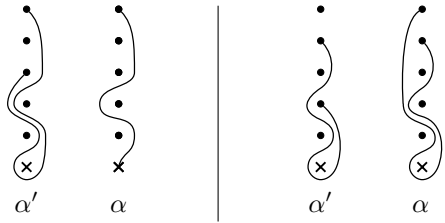

\centering
\begin{tabular}{ccc|ccc}
\begin{tabular}{cccc}
\includestandalone[scale=0.6]{figs/b/loose.orb.small}
&&
\includestandalone[scale=0.6]{figs/b/loose.orb.big}
\\
$\alpha'$&&$\alpha$
\end{tabular}
&&&&&
\begin{tabular}{cccc}
\includestandalone[scale=0.6]{figs/b/loose.long.small}
&&
\includestandalone[scale=0.6]{figs/b/loose.long.big}
\\
$\alpha'$&&$\alpha$
\end{tabular}
\end{tabular}
\caption{Loose subarcs that are not subarcs}
\label{fig:loose}
\end{figure}

The following \lcnamecref{sym cong} characterizes $\Con_A(B_n)$. % the congruences on $B_n$ that are restrictions of congruences on $S_{2n}$.

\begin{theorem}\label{sym cong}
If $\Theta$ is a lattice congruence on the weak order on $B_n$, the following are equivalent.
\begin{enumerate}[\qquad\rm(i)]
\item \label{res cong}
$\Theta$ is the restriction of a congruence on $S_{2n}$.
\item \label{res sym cong}
$\Theta$ is the restriction of a symmetric congruence on $S_{2n}$. That is, $\Theta$ is in~$\Con_A(B_n)$.
\item \label{loose sub}
The set of type-B arcs corresponding to join-irreducible signed permutations \emph{not} contracted by $\Theta$ is closed under passing to loose subarcs.
\end{enumerate}
\end{theorem}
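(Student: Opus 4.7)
The implication (ii) $\Rightarrow$ (i) is immediate since every symmetric congruence is a congruence. For (i) $\Rightarrow$ (ii), I use a symmetrization argument. Suppose $\Theta = \Theta'|_{B_n}$ for some congruence $\Theta'$ on $S_{2n}$, and form $\Theta'' := \Theta' \wedge (w_0 \Theta' w_0)$ in the congruence lattice of $S_{2n}$, where $w_0$ is the longest element of $S_{2n}$. The condition $\pi(-i) = -\pi(i)$ defining $B_n \subseteq S_{2n}$ is exactly $w_0 \pi w_0 = \pi$, so $B_n$ is fixed pointwise by $w_0$-conjugation. Consequently $w_0 \Theta' w_0$ agrees with $\Theta'$ on $B_n$, and since meets of congruences commute with restriction to a sublattice, $\Theta''|_{B_n} = \Theta \wedge \Theta = \Theta$; by construction $\Theta''$ is symmetric.

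For the remaining equivalence (ii) $\Leftrightarrow$ (iii), I pass to the centrally symmetric model of \cref{B sym sec}, where $\delta$ realizes $B_n$ as the set of signed permutations with centrally symmetric noncrossing arc diagrams. By \cref{A uncontracted} applied to $S_{2n}$, symmetric congruences on $S_{2n}$ correspond bijectively to sets $U'$ of $S_{2n}$-arcs closed under both type-A subarcs and the central symmetry $\beta \mapsto -\beta$. The bijection $\phi$ of \cref{B orb sec} associates to each $B_n$-arc $\alpha$ in the orbifold model a set $\phi^{-1}(\alpha)$ of $S_{2n}$-arcs (a single centrally symmetric arc if $\alpha$ is orbifold; a centrally symmetric pair otherwise). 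By inspecting the interval from $j_*^{B_n}$ up to $j$ inside $S_{2n}$ (a single cover when $\delta(j)$ is a symmetric arc, and a $2$-by-$2$ diamond when $\delta(j)$ is a symmetric pair), one verifies that $\Theta'|_{B_n}$ contracts the $B_n$-join-irreducible corresponding to $\alpha$ if and only if every element of $\phi^{-1}(\alpha)$ is contracted by $\Theta'$; by symmetry of $U'$, this is equivalent to some element of $\phi^{-1}(\alpha)$ being contracted.

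The technical heart of the argument is the following correspondence: for $B_n$-arcs $\alpha$ and $\alpha'$ in the orbifold model, $\alpha'$ is a loose subarc of $\alpha$ in the sense of \cref{loose sub def} if and only if there exist $\beta \in \phi^{-1}(\alpha)$ and $\beta' \in \phi^{-1}(\alpha')$ such that $\beta'$ is a type-A subarc of $\beta$. Granting this, (ii) $\Leftrightarrow$ (iii) is immediate: closure of $U$ under loose subarcs becomes closure of $U' := \bigcup_{\alpha \in U} \phi^{-1}(\alpha)$ under type-A subarcs (central symmetry closure of $U'$ being automatic), and conversely. The claim is proved by case analysis matching each of the three bullets of \cref{loose sub def} with a configuration of type-A subarc $\beta' \subseteq \beta$ in $S_{2n}$, classified by the signs of the endpoints of $\beta'$ and by whether those endpoints are centrally symmetric. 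The main obstacle is the bookkeeping in this case analysis: the ``no numbered points between the two pieces of $\alpha'$'' clauses in bullets~2 and~3 of \cref{loose sub def} encode precisely the geometric requirement that the lifted pair $\{\beta', -\beta'\}$ fits as a valid centrally symmetric pair with the orientation forced by $\beta'$ being a subarc of $\beta$ rather than of $-\beta$. This relaxation of the ``additional requirement'' in \cref{subs of pair} is exactly what makes loose subarcs capture $S_{2n}$-forcing rather than only $B_n$-forcing.
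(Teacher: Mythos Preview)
Your argument for (i)\,$\Leftrightarrow$\,(ii) and for (ii)\,$\Rightarrow$\,(iii) is essentially the same as the paper's, and the key correspondence you isolate (loose subarc of type-B arcs $\Leftrightarrow$ type-A subarc between elements of the corresponding symmetric arcs/pairs) is exactly the observation the paper uses.

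There is, however, a genuine gap in your (iii)\,$\Rightarrow$\,(ii) step. You assert that closure of $U$ under loose subarcs is equivalent to closure of $U':=\bigcup_{\alpha\in U}\phi^{-1}(\alpha)$ under type-A subarcs, and then take $U'$ as the uncontracted set of the desired symmetric congruence $\Theta'$. But $U'$ is \emph{never} closed under type-A subarcs except in degenerate cases, because there exist type-A arcs $\gamma$ on $\{\pm1,\dots,\pm n\}$ that do not lie in any $\phi^{-1}(\alpha')$: namely those for which $\gamma\neq-\gamma$ yet $\{\gamma,-\gamma\}$ is incompatible. For a concrete instance, take $n\ge3$, let $\beta$ be the arc from $-3$ to $2$ with $R(\beta)=\{-2,-1,1\}$; then $\{\beta,-\beta\}$ is a valid overlapping symmetric pair (so $\beta\in\phi^{-1}(\alpha)$ for a long arc $\alpha$), but its type-A subarc $\gamma$ from $-2$ to $2$ with $R(\gamma)=\{-1,1\}$ has $\gamma\ne -\gamma$ and shares both endpoints with $-\gamma$, so $\{\gamma,-\gamma\}$ is not a symmetric pair and $\gamma$ belongs to no $\phi^{-1}(\alpha')$. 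Thus even when $U$ is the set of all type-B arcs, $U'$ fails to be subarc-closed.

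The paper sidesteps this by working on the complement: it lets $C'=\bigcup_{\alpha\notin U}\phi^{-1}(\alpha)$, takes $\Theta'$ to be the $S_{2n}$-congruence \emph{generated} by contracting $C'$, and argues by contradiction that $\Theta'|_{B_n}=\Theta$. The point is that any $B_n$-join-irreducible contracted by $\Theta'|_{B_n}$ but not by $\Theta$ must, via your contraction criterion, have some $\beta'\in\phi^{-1}(\beta)$ contracted by $\Theta'$; since $\Theta'$ is generated by $C'$, this forces a type-A subarc of $\beta'$ to lie in $C'$, and elements of $C'$ are by construction non-orphan, so your correspondence applies and produces the contradiction. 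Your proof can be repaired along exactly these lines; what cannot be repaired is the direct claim that $U'$ itself is subarc-closed.
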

\begin{proof}
The equivalence of \eqref{res cong} and \eqref{res sym cong} is a special case of a standard lattice argument.
Suppose $\Theta$ is the restriction of some congruence $\Theta'$ on $S_{2n}$ and let $\Theta''$ be the congruence on $S_{2n}$ 
with $\pi\equiv\tau$ modulo $\Theta''$ if and only if $w_0\pi w_0\equiv w_0\tau w_0$ modulo~$\Theta'$.
Then $\Theta''$ also restricts to $\Theta$.
The meet in the lattice of congruences is coarsest common refinement, so $\Theta'\meet\Theta''$ also restricts to $\Theta$ and is manifestly symmetric.

Suppose $\Theta$ is a congruence on $B_n$ and let $U$ be the set of type-B arcs corresponding to join-irreducibles not contacted by $\Theta$.
Since $\Theta$ is a congruence, $U$ is closed under passing to subarcs in the usual type-B sense.  
We need to show that there exists a congruence $\Theta'$ of $S_{2n}$ that restricts to $\Theta$ if and only if $U$ is also closed under passing to loose subarcs that are not subarcs, as described in the second and third bullet points of \cref{loose sub def}.  

Given a type-B arc $\alpha$, write $\tilde\alpha$ for the symmetric arc/pair associated to $\alpha$.
(In the language of \cref{B orb sec}, $\alpha=\phi(\tilde\alpha)$.)
In both directions of the proof, the essential point is the following observation, which is easily verified, perhaps with the help of \cref{fig:loose}:
Given type-B arcs $\alpha'$ and $\alpha$, the case where $\alpha'$ is a loose subarc of $\alpha$ but not a subarc of $\alpha$ is precisely the case where there is a subarc relationship, in the type-A sense, between some arc in $\tilde\alpha'$ and some arc in $\tilde\alpha$.

If $\Theta$ is the restriction of a symmetric congruence $\Theta'$, then considering the symmetric arcs/pairs corresponding to $\alpha$ and $\alpha'$, closure of $U$ under passing to loose subarcs follows immediately from the observation in the previous paragraph and the fact that $\Theta'$ is closed under passing to subarcs in the type-A sense.

Conversely, suppose $U$ is closed under passing to loose subarcs.
Let $C$ be the complement of $U$ in the set of all type-B arcs.
Each type-B arc in $C$ corresponds to a symmetric arc or symmetric pair on $2n$ points.
Let $C'$ be the set of all of these type-A arcs on $2n$ points.
Let $\Theta'$ be the congruence on $S_{2n}$ generated by contracting the arcs in $C'$.
Because $C'$ is a symmetric set of arcs, $\Theta'$ is a symmetric congruence.
We will show that $\Theta'$ restricts to $\Theta$.

If $\Theta'$ fails to restrict to $\Theta$, then it restricts to a strictly coarser congruence $\tilde\Theta$.
In particular, there is some join-irreducible element contracted by $\tilde\Theta$ but not by $\Theta$, corresponding to a type-B arc $\beta$.
Let $\beta'$ be the symmetric arc associated to $\beta$ or one of the arcs in the symmetric pair associated to $\beta$.
Since $\beta$ is contracted by $\tilde\Theta$, which is symmetric, there is some arc $\alpha'\in C'$ that is a subarc of $\beta'$ in the type-A sense.
Let $\alpha$ be the type-B arc that arises from $\alpha'$ by passing to the orbifold model.
Since $\beta$ is not contracted by $\Theta$, $\alpha$ is not a subarc of $\beta$ in the type-B sense.  
The observation says that $\alpha$ is a loose subarc of $\beta$, and this contradiction implies that $\Theta'$ restricts to~$\Theta$.
\end{proof}

The following corollary is immediate.

\begin{corollary}\label{conA meet sub}
$\Con_A(B_n)$ induces a sublattice of the congruence lattice of $B_n$. 
\end{corollary}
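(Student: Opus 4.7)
The plan is to deduce the corollary directly from \cref{sym cong} by using the dictionary between congruences on $B_n$ and sets of type-B arcs. First I would recall, from \cref{arc forcing B orb} together with the general theory of congruences, that every congruence $\Theta$ on $B_n$ is determined by its set $C(\Theta)$ of contracted arcs, and that $C(\Theta)$ can be any set of type-B arcs closed under passing to superarcs. Under this bijection, lattice operations on $\Con(B_n)$ translate into set operations on arc sets: the meet satisfies $C(\Theta_1 \meet \Theta_2) = C(\Theta_1) \cap C(\Theta_2)$, because a join-irreducible is contracted by the coarsest common refinement if and only if it is contracted by both congruences; and the join satisfies $C(\Theta_1 \join \Theta_2) = C(\Theta_1) \cup C(\Theta_2)$, because the union of two superarc-closed sets is already superarc-closed (so no further closure is needed beyond the union).

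Second, I would apply \cref{sym cong}, which characterizes membership in $\Con_A(B_n)$ by the strengthened condition that $C(\Theta)$ is closed under passing to \emph{loose} superarcs. This property is manifestly stable under both intersection and union of arc sets: if $\alpha'$ lies in the intersection (resp.\ union) of two loose-superarc-closed sets and $\alpha$ is a loose superarc of $\alpha'$, then $\alpha$ lies in every set containing $\alpha'$, hence in the intersection (resp.\ in the set containing $\alpha'$, hence in the union). Combining with the first step, both $\Theta_1 \meet \Theta_2$ and $\Theta_1 \join \Theta_2$ lie in $\Con_A(B_n)$ whenever $\Theta_1$ and $\Theta_2$ do, which is exactly the sublattice property.

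There is essentially no obstacle to overcome here, since all the substantive work has already been carried out in \cref{sym cong}. The only care needed is in verifying the translation between lattice operations on congruences and set operations on contracted arc sets, which follows routinely from the correspondence between congruences and forcing-closed sets of join-irreducibles.
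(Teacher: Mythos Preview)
Your proposal is correct and matches the paper's intent: the paper states that the corollary is immediate from \cref{sym cong}, and your argument via condition~\eqref{loose sub} (translating to contracted-arc sets closed under loose superarcs, then observing that this closure property is preserved under unions and intersections) is precisely the natural way to unpack that immediacy. The translation $C(\Theta_1\join\Theta_2)=C(\Theta_1)\cup C(\Theta_2)$ is justified exactly as you say, since the forcing order on $B_n$ is a genuine partial order (congruence uniformity) and superarc-closed sets already form a distributive lattice under union and intersection.
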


%\marginN{Which of the examples we have done are from symmetric A? 
%\ra{yes! we know about 6.1 (no) and 6.2 (yes?), and we're thinking hard about the congruence in 6.3 (hard maybe, emily thinks no).} \re{Take a long arc with right endpoint at 1. This is contracted above. In type B, this arc has no  orbifold superarcs, but in type A (as a pair of symmetric arcs) it does! In fact, we can make a type A super arc which is orbifold and passes left of 1. For this reason, 6.3 is not symmetric.} \ra{This is definitely correct, and I feel fully convinced that 6.1 is NO and that 6.2 is YES, so I took a shot at writing it out in case we want it. It might not be good, but it's something :)}
%\rn{Say something about Cambrian congruences too.}
%\rn{The bipartite and linear biCambrian congruence is the restriction of a symmetric congruence.  This is presumably not hard to see from \cref{sym cong}, but Emily gave a good general argument for arbitrary biCambrian congruences. }
%}

To illustrate \cref{sym cong}, we consider which of the congruences in \cref{ex sec} are in $\Con_A(B_n)$.
The parabolic congruences of \cref{ex:B parabolic} are easily seen to be restrictions of parabolic congruences on $S_{2n}$.
Similarly, each Cambrian congruence (\cref{ex:B cambrian}) on $B_n$ is the restriction of a Cambrian congruence on $S_{2n}$.
Therefore also each biCambrian congruence (including those in \cref{ex:B bip bicambrian,ex:B lin bicambrian}) is in $\Con_A(B_n)$ because $\Con_A(B_n)$ induces a sublattice of $\Con(B_n)$.
The reader can also verify these statements using \cref{sym cong}.

\cref{sym cong} is the simplest way to determine whether congruences from \cref{ex:B diagram} are in $\Con_A(B_n)$.
The congruence described in \cref{simion finest} is not in $\Con_A(B_n)$ because it contracts all long arcs, some of which are loose subarcs of uncontracted orbifold arcs.
The congruence in \cref{nonhom finest} is in $\Con_A(B_n)$: the only uncontracted orbifold arc has upper endpoint 1 and the only uncontracted long arcs are those with lower endpoint 1, so no loose subarc relationship seen in \cref{fig:loose} may occur.
The congruence in \cref{delta finest} is not not in $\Con_A(B_n)$, since there exists a long arc with right endpoint at 1 which is contracted and is a loose subarc of an uncontracted orbifold arc passing left of 1.

% \section*{Critical reading record}
% On the following chart, let's record a ``good'' if we have read critically and are feeling happy with the (sub)section modulo any unresolved conversations in marginal notes.

% \noindent
% \begin{tabular}{c||c|c|c}
% Section                 &Ashley &Emily  &Nathan \\\hline\hline
% \ref{intro sec}	        & good  &   goodish    & good      \\  \hline  
% \ref{prelim sec}        & good  &   good    & good      \\  \hline  
% \ref{lat sec}           & good  &  good     & good      \\  \hline  
% \ref{weak sec}          & good  &   good    & good      \\  \hline  
% \ref{shard sec}	        & good  &  good     & good      \\   \hline 
% \ref{A sec}	            & good  & good      & good      \\  \hline  
% \ref{B sec} intro       & good  &  good     & good      \\    \hline
% \ref{B sym sec}         & good  &  good     & good      \\    \hline
% \ref{B orb sec}	        & good  &       &  good     \\    \hline
% % \ref{D shard sec}       & good  &       & good      \\    \hline
% % \ref{D nc sec}          & good  &       & good      \\    \hline
% % \ref{D arrow sec}       & good  &       & good      \\    \hline
% % \ref{D subarc sec}      & good  &       & good    \\   \hline 
% % \ref{D superarc sec}	& good  &       & good      \\    \hline
% \ref{ex sec}            &       &       &       
% \end{tabular}


\begin{thebibliography}{27}

% \bibitem{Asai}
% S. Asai,
% \textit{Bricks over preprojective algebras and join-irreducible elements in Coxeter groups.}
% J. Pure Appl. Algebra \textbf{226} (2022), no. 1, Paper No. 106812. 

\bibitem{Barnard}
E. Barnard,
\textit{The canonical join complex.}
Electron. J. Combin. \textbf{26} (2019), no. 1, Paper No. 1.24, 25 pp. 

\bibitem{bicat}
E.\ Barnard and N.\ Reading,
\textit{Coxeter-biCatalan combinatorics.}
J.\ Algebraic Combin.\ \textbf{47(2)} (2018), 241--300.

\bibitem{Darcs}
E.\ Barnard,  N.\ Reading, and A.\ M.\ Tharp,
\textit{Noncrossing arc diagrams of type D.}
In preparation, 2024.

%\bibitem{Bj-Br}
%A. Bj\"{o}rner and F. Brenti,
%\textit{Combinatorics of Coxeter groups.}
%Graduate Texts in Mathematics, \textbf{231},
%Springer, New York, 2005. 

\bibitem{BEZ}
A. Bj\"{o}rner, P. Edelman and G. Ziegler,
\textit{Hyperplane Arrangements with a Lattice of Regions.}
Discrete Comput. Geom. \textbf{5} (1990), 263--288.

%\bibitem{Bourbaki}
%N. Bourbaki,
%\textit{Lie groups and Lie algebras. Chapters 4--6.}
%Springer-Verlag, Berlin, 2002.

\bibitem{BCDK}
M. Bousquet-M\'{e}lou, A. Claesson, Anders, M. Dukes, and S. Kitaev,
\textit{$(2+2)$-free posets, ascent sequences and pattern avoiding permutations.}
J. Combin. Theory Ser. A \textbf{117} (2010), no. 7, 884--909. 

\bibitem{affncA}
L. Brestensky and N. Reading,
\textit{Noncrossing partitions of an annulus.}
Preprint, 2022. (\href{http://arxiv.org/abs/2212.14151}{\texttt{arXiv:2212.14151}}),
to appear in Comb. Theory.


\bibitem{CasPolMor}
N. Caspard, C. Le Conte de Poly-Barbut, and M.  Morvan, 
\textit{Cayley lattices of finite Coxeter groups are bounded.}
Adv. in Appl. Math. \textbf{33} (2004), no. 1, 71--94. 

\bibitem{FreeLattices}
R. Freese, J. Je\v{z}ek, and J. Nation,
\textit{Free lattices.} 
Mathematical Surveys and Monographs, \textbf{42}. 
American Mathematical Society, Providence, RI, 1995.

%\bibitem{Humphreys}
%J. Humphreys,
%\textit{Reflection Groups and Coxeter Groups}, Cambridge Studies in Advanced Mathematics, \textbf{29},
%Cambridge Univ. Press, 1990.

\bibitem{rectangle}
S. E. Law and N. Reading,
\textit{The Hopf algebra of diagonal rectangulations.}
J. Combin. Theory Ser. A \textbf{119} (2012), no. 3, 788--824. 

\bibitem{Poly-Barbut}
C. Le Conte de Poly-Barbut,
\textit{Sur les treillis de Coxeter finis.}
Math. Inform. Sci. Humaines No. \textbf{125} (1994), 41--57. 

% \bibitem{IRRT}
% Osamu Iyama, Nathan Reading, Idun Reiten, and Hugh Thomas,
% \textit{Lattice structure of Weyl groups via representation theory of preprojective algebras.}
% Compos. Math. \textbf{154} no. 6 (2018), 1269--1305.

\bibitem{shardpoly}
A. Padrol, V. Pilaud, and J. Ritter,
\textit{Shard Polytopes.}
International Mathematics Research Notices \textbf{2023}, Issue 9, 7686--7796, https://doi.org/10.1093/imrn/rnac042

% \bibitem{petersen}
% T. K. Petersen,
% \textit{On the shard intersection order of a Coxeter group.}
% SIAM J. Discrete Math. \textbf{27} (2013), no. 4, 1880--1912. 

\bibitem{quotientopes}
V. Pilaud and F. Santos,
\textit{Quotientopes.} 
Bull. Lond. Math. Soc. \textbf{51} (2019), no. 3, 406--420. 

\bibitem{hyperplane}
N. Reading,
\textit{Lattice and order properties of the poset of regions in a hyperplane arrangement},
Algebra Universalis, \textbf{50} (2003), 179--205.

\bibitem{congruence}
N. Reading,
\textit{Lattice congruences of the weak order},
Order \textbf{21} (2004) no.~4, 315--344.

\bibitem{con_app}
N.~Reading,
\textit{Lattice congruences, fans and Hopf algebras.}
J. Combin. Theory Ser. A {\bf 110} (2005) no.~2, 237--273.

\bibitem{cambrian}
N.\ Reading,
\textit{Cambrian Lattices.} 
Adv.\ Math.\ \textbf{205} (2006) no.\ 2, 313--353. 

%\bibitem{sortable}
%N.~Reading,
%\textit{Clusters, Coxeter-sortable elements and noncrossing partitions.}
%Trans. Amer. Math. Soc. \textbf{359} (2007), no. 12, 5931--5958.

%\bibitem{plane}
%N. Reading.
%\textit{Noncrossing partitions, clusters and the Coxeter plane.}
%S\'em. Lothar. Combin. \textbf{63} (2010) Art. B63b, 32 pages.

\bibitem{shardint}
N. Reading, 
\textit{Noncrossing partitions and the shard intersection order.} 
 J. Algebraic Combin. \textbf{33} (2011), no. 4, 483--530.

\bibitem{arcs}
N. Reading,
\textit{Noncrossing arc diagrams and canonical join representations.}
SIAM J. Discrete Math. \textbf{29} (2015), no. 2, 736--750. 

\bibitem{diagram}
N.\ Reading, 
\textit{Lattice homomorphisms between weak orders.} 
Electron.\ J.\ Combin.\ \textbf{26}, (2019), Article Number P2.23.

\bibitem{regions9} 
N. Reading, 
\textit{Lattice theory of the poset of regions.}
Lattice Theory: Special Topics and Applications, Volume \textbf{2}, ed. G. Gr\"{a}tzer and F. Wehrung, 
399--487,
Birkh\"{a}user/Springer, Cham, 2016. 

\bibitem{regions10} 
N. Reading, 
\textit{Finite Coxeter Groups and the weak order.}
Lattice Theory: Special Topics and Applications, Volume \textbf{2}, ed. G. Gr\"{a}tzer and F. Wehrung, 
489--561,
Birkh\"{a}user/Springer, Cham, 2016. 

\bibitem{surfnc}
N. Reading.
\textit{Noncrossing partitions of a marked surface.}
Preprint, 2022 (\href{http://arxiv.org/abs/2212.13799}{\texttt{arXiv:2212.13799}}),
to appear in SIAM J. Discrete Math.

\bibitem{ReadingTharp}
N.\ Reading and A.\ M.\ Tharp,
\textit{The shard intersection order on noncrossing arc diagrams.}
In preparation, 2025.

\bibitem{RitterThesis}
J.\ Ritter,
\textit{Shard polytopes and quotientopes for lattice congruences of the weak order.} 
Ph.D. Thesis, Institut Polytechnique de Paris, 2021.

\bibitem{SanWeh}
L. Santocanale and F. Wehrung,
\textit{Sublattices of associahedra and permutohedra.}
Adv. in Appl. Math. \textbf{51} (2013), no.~3, 419-445. 

\bibitem{TharpThesis}
A. M. Tharp,
\textit{Arcs and shards.}
Ph.D. Thesis, North Carolina State University, 2023.

\bibitem{Thomas}
H. Thomas,
\textit{Tamari lattices and noncrossing partitions in type B.}
Discrete Math.\ \textbf{306} (2006), no.~21, 2711--2723.


\end{thebibliography}
\end{document}